\def\q{\hfill\rule{1ex}{1ex}}
\def\0{\emptyset}
\def\q{\hfill\rule{1ex}{1ex}}
\newtheorem{theorem}{Theorem}[section]
\newtheorem{definition}[theorem]{Definition}
\newtheorem{lemma}[theorem]{Lemma}
\newtheorem{claim}[theorem]{Claim}
\newtheorem{observation}[theorem]{Observation}
\newtheorem{cor}[theorem]{Corollary}
\newtheorem{remark}[theorem]{Remark}
\newtheorem{conjecture}[theorem]{Conjecture}
\def\q{\hfill\rule{1ex}{1ex}}
\newenvironment{proof}{{\noindent\it Proof.}}{\hfill $\square$\par}
\newcommand\invt{\mathrm{inv}(T)}
\newcommand\invd{\mathrm{inv}(D)}
\newcommand\invdi{\mathrm{inv}(D_i)}
\newcommand\invn{\mathrm{inv}(n)}
\newcommand\fk{\mathbb{F}_2^k}
\begin{document}


\title{The inversion number of dijoins and blow-up digraphs}
\author{
    {\small\bf Haozhe Wang}\thanks{ Department of Mathematical Sciences, Tsinghua University, Beijing 100084, China. email:  whz22@mails.tsinghua.edu.cn}\quad
    {\small\bf Yuxuan Yang}\thanks{ Correspondence Author. School of Science, Beijing University of Posts and Telecommunications, Beijing 100876, China. email:  yangyx@bupt.edu.cn}\quad
    {\small\bf Mei Lu}\thanks{ Department of Mathematical Sciences, Tsinghua University, Beijing 100084, China. email: lumei@tsinghua.edu.cn
}\\
}

\date{}

\maketitle\baselineskip 16.3pt

\begin{abstract}
	For an oriented graph \(D\), the \emph{inversion} of \(X \subseteq V(D)\) in \(D\) is the digraph obtained
from \(D\) by reversing the direction of all arcs with both ends in \(X\). The \emph{inversion number} of
\(D\), denoted by \(\invd\), is the minimum number of inversions needed to transform \(D\) into an acyclic
digraph. In this paper, we first show that \( \mathrm{inv} (\overrightarrow{C_3} \Rightarrow D)= \invd +1 \) for any
oriented graph \(D\) with even inversion number \(\invd\), where the dijoin \(\overrightarrow{C_3} \Rightarrow D\) is the oriented
graph obtained from the disjoint union of \(\overrightarrow{C_3}\) and \(D\) by adding all arcs from \(\overrightarrow{C_3}\) to \(D\).
Thus we disprove the conjecture of Aubian el at. \cite{2212.09188} and the conjecture of Alon el at. \cite{2212.11969}. We also study the blow-up graph
which is an oriented graph obtained from a tournament by replacing all vertices into oriented graphs. We
construct a tournament \(T\) with order \(n\) and \(\invt=\frac{n}{3}+1\) using blow-up graphs.
\end{abstract}


{\bf Keywords:}  inversion number; tournament; oriented graph; dijoin; blow-up graph.
\vskip.3cm


\section{Introduction}

In this paper we only consider digraphs without loops, parallel edges and \(2\)-cycles. In the following content, we will use digraph and oriented graph interchangeably. We denote by \(V(D)\) and \(A(D)\) the vertex set and the arc set of a digraph \(D\), respectively. The subdigraph of \(D\) induced by a subset \(X \subseteq V(D)\) is denoted by \(D \left\langle X \right\rangle\).
The \emph{inversion} of \(X\) in \(D\) is the digraph obtained from \(D\) by reversing all arcs in \(A(D \left\langle X \right\rangle)\), which we denote by Inv\((D;X)\). If \((X_i)_{i \in I}\) is a family of subsets of \(V(D)\), then Inv\((D;(X_i)_{i \in I})\) is the digraph obtained from \(D\) by inverting \(X_i\) one after another. Note that the ordering of the inversions doesn't matter. We say \((X_i)_{i \in I}\) is a \emph{decycling family} if Inv\((D;(X_i)_{i \in I})\) is acyclic. Furthermore,
if \(|I|=k\), we say \((X_i)_{i \in I}\) is a \(k\)-decycling family.
The \emph{inversion number} of \(D\) is the minimum cardinality of a decycling family, which we denote by inv\((D)\). For a positive integer \(n\), denote \(\invn= \mathrm{max}\{\invd | \text{\(D\) oriented graph of order \(n\)}\}\).

The inversion problem was first introduced by Belkhechine et al. (\cite{belkhechine:tel-00609544} \cite{BELKHECHINE2010703}). They mainly concerned about \(\invn\). Recently, Bang-Jensen et al. \cite{2105.04137}, Alon el at. \cite{2212.11969} and Aubian el at. \cite{2212.09188} have obtained many results regarding all aspects of the problem.

It is natural to consider the inversion number under some graph operations. For digraphs $L$ and $R$, the \emph{dijoin} $L\Rightarrow R$ from $L$ to $R$ is the digraph composed by disjoint union of $L$ and $R$, with an edge $uv$ for all $u\in V(L)$ and $v\in V(R)$.
Bang-Jensen et al. \cite{2105.04137} showed that, for two strongly connected oriented graphs
\(L\) and \(R\) such that \(\mathrm{inv}(L),\mathrm{inv}(R) \ge 2\),  \(\mathrm{inv}(L \Rightarrow R) \ge 4\). They also gave the following conjecture.
\begin{conjecture}[Bang-Jensen et al. \cite{2105.04137}]\label{dijoinconjecture}
    For oriented graphs \(L\) and \(R\), we have \(\mathrm{inv}(L \Rightarrow R) =\mathrm{inv}(L)+\mathrm{inv}(R)\).
\end{conjecture}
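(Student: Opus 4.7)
The plan is to prove the two inequalities separately. The upper bound $\mathrm{inv}(L \Rightarrow R) \le \mathrm{inv}(L) + \mathrm{inv}(R)$ is straightforward: let $(A_i)$ be an optimal decycling family for $L$ and $(B_j)$ an optimal one for $R$, and consider their concatenation as a family of subsets of $V(L \Rightarrow R)$. None of these sets crosses the bipartition $V(L) \sqcup V(R)$, so all cross arcs remain oriented $L \to R$ after the inversions, and any directed cycle in the resulting digraph must therefore lie entirely in $L$ or entirely in $R$; but both halves are already decycled by the chosen families.

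The lower bound $\mathrm{inv}(L \Rightarrow R) \ge \mathrm{inv}(L) + \mathrm{inv}(R)$ is where the real difficulty lies. Given any decycling family $(X_i)_{i=1}^k$ for $L \Rightarrow R$, write $A_i = X_i \cap V(L)$ and $B_i = X_i \cap V(R)$. The restricted families $(A_i)$ and $(B_i)$ decycle $L$ and $R$ respectively, since acyclicity of the whole digraph immediately implies acyclicity of the two induced subdigraphs on $V(L)$ and $V(R)$. This only yields $k \ge \max(\mathrm{inv}(L), \mathrm{inv}(R))$, which is too weak. The natural refinement is a partition argument: find a split $[k] = I_L \sqcup I_R$ such that $(A_i)_{i \in I_L}$ still decycles $L$ and $(B_j)_{j \in I_R}$ still decycles $R$, yielding $k = |I_L| + |I_R| \ge \mathrm{inv}(L) + \mathrm{inv}(R)$. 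The known lower bound $\mathrm{inv}(L \Rightarrow R) \ge 4$ when $L, R$ are strongly connected with $\mathrm{inv}(L), \mathrm{inv}(R) \ge 2$ provides some encouragement for this intuition in the easiest nontrivial case.

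The main obstacle lies with the \emph{mixed} inversions, those $X_i$ with both $A_i$ and $B_i$ nonempty. Inverting such an $X_i$ also reverses the cross arcs between $A_i$ and $B_i$, producing arcs from $R$ back to $L$ in the final digraph; these backward arcs can interact with the internal structure of $L$ and $R$ in delicate ways, since a cycle that the restricted family $(A_i)_{i \in I_L}$ fails to kill inside $L$ might nonetheless be broken in $L \Rightarrow R$ by detouring out to $R$ and back along a reversed cross arc. Controlling this interaction is the crux. One promising angle is the $\fk$-labelling viewpoint: assign each vertex $v$ the characteristic vector $f(v) \in \fk$ of the set $\{i : v \in X_i\}$, so that an arc $uv$ is reversed iff $\langle f(u), f(v) \rangle = 1$ in $\mathbb{F}_2$; one would then try to exhibit a subspace of $\fk$ of dimension $\mathrm{inv}(L)$ whose coordinates alone decycle $L$, complemented by one of dimension $\mathrm{inv}(R)$ that handles $R$. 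Given that the paper goes on to prove $\mathrm{inv}(\overrightarrow{C_3} \Rightarrow D) = \mathrm{inv}(D) + 1$ only under a parity assumption and explicitly disproves related stronger conjectures, I anticipate that this partitioning step resists proof precisely because cross-arc reversals genuinely afford $L \Rightarrow R$ more decycling flexibility than the naive disjoint-sum bound admits.
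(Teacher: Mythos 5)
Your upper bound is correct and is the standard argument. But the statement you are trying to prove is false, and the paper never proves it: Conjecture~\ref{dijoinconjecture} is stated here only as background, and the surrounding discussion records that it was disproved independently by Alon et al.\ \cite{2212.11969} and Aubian et al.\ \cite{2212.09188}. Concretely, Alon et al.\ exhibit a $9$-vertex tournament $R$ with $\mathrm{inv}(R)=\mathrm{inv}(\overrightarrow{C_3}\Rightarrow R)=3$, so with $L=\overrightarrow{C_3}$ one has $\mathrm{inv}(L\Rightarrow R)=3<1+3$; and Theorem~\ref{thm1.2} gives, for every odd $k\ge 3$, a tournament $T_k$ with $\mathrm{inv}(T_k)=k$ and $\mathrm{inv}(T_k\Rightarrow R)\le k+\mathrm{inv}(R)-1$ for \emph{every} $R$ with $\mathrm{inv}(R)\ge 1$. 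So the partition step you propose for the lower bound --- splitting $[k]=I_L\sqcup I_R$ so that $(X_i\cap V(L))_{i\in I_L}$ decycles $L$ and $(X_j\cap V(R))_{j\in I_R}$ decycles $R$ --- provably cannot exist in general; your closing suspicion that the mixed inversions ``genuinely afford $L\Rightarrow R$ more decycling flexibility'' is exactly right and is the reason the conjecture fails.

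What the paper actually proves is the conjecture restricted to a special case: Theorem~\ref{mainresult} shows $\mathrm{inv}(L\Rightarrow R)=\mathrm{inv}(L)+\mathrm{inv}(R)$ when $\mathrm{inv}(L)=1$ and $\mathrm{inv}(R)=k$ is \emph{even}. The method is not a partition argument but the $\mathbb{F}_2$-labelling viewpoint you mention at the end, pushed much further: Lemmas~\ref{lemma2.1} and~\ref{thm2.2} show that for even $k$ the characteristic vectors of any decycling family of a tournament with inversion number $k$ must have rank at least $k$, while Lemma~\ref{Essential claim} shows that $\mathrm{inv}(\overrightarrow{C_3}\Rightarrow D)=\mathrm{inv}(D)=k$ would force a $k$-decycling family of $D$ of rank $k-1$; the parity of $k$ is what makes these two facts collide. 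If you want to salvage your attempt, the realistic target is this restricted statement (or the known cases $(l,r)\in\{(1,1),(2,2)\}$ etc.\ listed in the introduction), not the full conjecture.
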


This is called ``dijoin conjecture", which is an important cornerstone of inversion number problem. Conjecture \ref{dijoinconjecture} is trivial when one of $\mathrm{inv}(L)$ and $\mathrm{inv}(R)$ is zero. Bang-Jensen et al. \cite{2105.04137} showed it is true when $\mathrm{inv}(L)+\mathrm{inv}(R)\le 3$, and Alon el at. \cite{2212.11969} showed it is true when $\mathrm{inv}(L)=\mathrm{inv}(R)=2$. However, Alon el at. \cite{2212.11969} and Aubian el at. \cite{2212.09188} independently found Conjecture \ref{dijoinconjecture} is not correct in general.

Note that the dijoin operation is not a symmetric operation, which means $L\Rightarrow R$ and $R\Rightarrow L$ are different in general. However, Conjecture \ref{dijoinconjecture} is symmetric in some sense, since the graphs obtained by inverting the whole vertex set give the corresponding examples in the other direction. Actually, we have $\mathrm{inv}(D)=\mathrm{inv}(\mathrm{Inv}(D;V(D)))$.

To disprove Conjecture \ref{dijoinconjecture}, Aubian el at. \cite{2212.09188} gave an explicit construction.

\begin{theorem}[Aubian el at. \cite{2212.09188}]\label{thm1.2}
	For every odd integer \(k \ge 3\), there is a tournament \(T_k\) with \(\mathrm{inv}(T_k)=k\) such that
    \(\mathrm{inv}(T_k \Rightarrow R) \le k+ \mathrm{inv}(R) -1\) for every oriented graph \(R\) with \(\mathrm{inv}(R) \ge 1\).
\end{theorem}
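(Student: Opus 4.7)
The plan is, for each odd $k\ge 3$, to construct an explicit tournament $T_k$ with $\mathrm{inv}(T_k)=k$ and a specially structured $k$-decycling family, and then to exhibit, for any $R$ with $\mathrm{inv}(R)\ge 1$, a decycling family of $T_k\Rightarrow R$ of size $k+\mathrm{inv}(R)-1$. I would split the argument into three parts: construction of $T_k$, lower bound $\mathrm{inv}(T_k)\ge k$, and the dijoin saving.

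First I would build $T_k$ from smaller pieces (an iterated dijoin/blow-up of copies of $\overrightarrow{C_3}$, or an algebraic tournament on a vertex set indexed by $\mathbb{F}_2^k$), chosen so that $T_k$ admits a distinguished $k$-decycling family $(X_1,\dots,X_k)$ whose characteristic matrix in $\mathbb{F}_2^{V(T_k)\times k}$ has a rigid algebraic shape --- for instance a non-trivial symmetry involving complementation of $X_k$, or a block structure in which $X_k$ is controlled by a $\overrightarrow{C_3}$-like piece. The upper bound $\mathrm{inv}(T_k)\le k$ is then immediate from this family. For the matching lower bound, my strategy is an $\mathbb{F}_2$-linear rank / parity obstruction: to any hypothetical $(k-1)$-decycling family one associates a characteristic matrix of rank at most $k-1$, and a parity count of a suitable combinatorial invariant of $T_k$ (for example directed triangles, or a bilinear functional on the characteristic vectors) then yields a contradiction. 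The oddness of $k$ enters this count in an essential way.

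For the dijoin bound, let $(Y_1,\dots,Y_m)$ be a minimum decycling family of $R$ with $m=\mathrm{inv}(R)$. The plan is to use a joint $\mathbb{F}_2$-coloring
\[
c\colon V(T_k)\cup V(R)\longrightarrow \mathbb{F}_2^{k+m-1}
\]
in which exactly one coordinate is shared between the $T_k$-side and the $R$-side. The $T_k$-side, read off on its $k$ coordinates, is the distinguished decycling $(X_1,\dots,X_k)$; the $R$-side, read off on its $m$ coordinates, is $(Y_1,\dots,Y_m)$; the shared coordinate identifies $X_k$ on the left with $Y_1$ on the right. Under this coloring, dijoin arcs are reversed precisely between $X_k$ and $Y_1$, and one reads off a global topological order of the modified $T_k\Rightarrow R$ using the rigid shape of the $T_k$-decycling. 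The rigid shape is tuned --- via the oddness of $k$ --- to realign $X_k$ with its complement whenever the first choice fails, which is what guarantees a valid topological order uniformly in $R$.

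The main obstacle is precisely this last point. A direct check (already for $R=\overrightarrow{C_3}$) shows that the naive family $(X_1,\dots,X_{k-1},X_k\cup Y_1,Y_2,\dots,Y_m)$ does not decycle $T_k\Rightarrow R$: for every two-element set $Y_1$ a cycle through the flipped dijoin arcs remains. Breaking this obstruction forces the construction of $T_k$ to encode genuine combinatorial flexibility --- a symmetry between $X_k$ and its complement, or two essentially different $k$-decyclings sharing a common block --- and the parity of $k$ is exactly the parameter guaranteeing that this flexibility is compatible with $\mathrm{inv}(T_k)=k$. Producing a single tournament $T_k$ for which the flexibility, the parity-based lower bound, and the uniform dijoin saving all fit together is the technical heart of the proof.
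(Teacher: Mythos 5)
First, a point of order: Theorem~\ref{thm1.2} is quoted from Aubian et al.~\cite{2212.09188}; the present paper does not prove it, so there is no in-paper proof to compare against. Judged on its own terms, your proposal is a research plan rather than a proof. You never exhibit a tournament $T_k$, you never carry out the lower bound $\mathrm{inv}(T_k)\ge k$ (you only name a ``parity obstruction'' strategy), and you close by conceding that making the construction, the lower bound, and the dijoin saving coexist ``is the technical heart of the proof.'' Every load-bearing step is therefore missing, and the statement cannot be considered proved.

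Beyond incompleteness, the one mechanism you do commit to --- a shared coordinate identifying $X_k$ on the left with $Y_1$ on the right, so that ``dijoin arcs are reversed precisely between $X_k$ and $Y_1$'' --- points the wrong way: reversing dijoin arcs is exactly what one must avoid, since any arc from $V(R)$ back into $V(T_k)$ threatens to close a cycle, and you correctly observe that this already fails for $R=\overrightarrow{C_3}$. The property that actually drives the saving (isolated in this paper as condition $(2)$ of Lemma~\ref{odd invertion}) is that $T_k$ admits a $k$-decycling family $(X_1,\dots,X_k)$ in which every vertex lies in an \emph{even} number of the $X_i$, i.e.\ every characteristic vector is orthogonal to $\mathbf{1}$. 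Given such a family and a minimum decycling family $(Y_1,\dots,Y_r)$ of $R$, one decycles $T_k\Rightarrow R$ with $k+r-1$ inversions by assigning to $v\in V(T_k)$ the vector $(\mathbf{x}_v,0)\in\mathbb{F}_2^{k+r-1}$ and to $u\in V(R)$ the vector whose first $k$ coordinates all equal the indicator of $u\in Y_1$ and whose last $r-1$ coordinates record membership in $Y_2,\dots,Y_r$. Every dijoin arc then has scalar product $0$ (because $\mathbf{x}_v$ has even weight) and so keeps its orientation, while the oddness of $k$ gives $\mathbf{1}\cdot\mathbf{1}=1$ on the first block, so the products inside $V(R)$ agree with those of the original family; acyclicity follows. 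The genuinely hard part, which your sketch does not touch, is constructing a $T_k$ that has such an even-weight $k$-decycling family yet still satisfies $\mathrm{inv}(T_k)=k$ --- note that by Lemma~\ref{thm2.2} this is impossible for even $k$, which is the real role of parity here. That construction and its lower bound are where the content of Aubian et al.'s theorem lies.
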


They also conjectured that the same statement also holds for every even integer $k\ge 4$.

\begin{conjecture}[Aubian el at. \cite{2212.09188}]\label{conjecture}
    For any \(k \ge 3\), there is a tournament \(T_k\) with \(\mathrm{inv}(T_k)=k\) such that
    \(\mathrm{inv}(T_k \Rightarrow R) < k+\mathrm{inv}(R)\) for all \(R\) with \(\mathrm{inv}(R) \ge 1\).
\end{conjecture}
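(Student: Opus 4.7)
The plan is to split on the parity of $k$: odd $k$ is covered directly by Theorem~\ref{thm1.2}, and for even $k$ I would bootstrap Aubian et al.'s construction via a dijoin with a small tournament.

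For odd $k \ge 3$, the tournament $T_k$ from Theorem~\ref{thm1.2} already satisfies $\mathrm{inv}(T_k \Rightarrow R) \le k + \mathrm{inv}(R) - 1 < k + \mathrm{inv}(R)$ whenever $\mathrm{inv}(R) \ge 1$, so the conjecture holds in this case. For even $k \ge 4$, I would set $T_k := T_{k-1} \Rightarrow \overrightarrow{C_3}$, where $T_{k-1}$ is the odd-index tournament from Theorem~\ref{thm1.2}; since a dijoin of two tournaments is a tournament, $T_k$ is again a tournament.

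I would then verify two properties. First, $\mathrm{inv}(T_k) = k$: the upper bound follows from the general inequality $\mathrm{inv}(L \Rightarrow R) \le \mathrm{inv}(L) + \mathrm{inv}(R)$, which is immediate because decycling $L$ and $R$ separately leaves a dijoin of acyclic digraphs, which is acyclic. The lower bound $\mathrm{inv}(T_k) \ge k$ is exactly Conjecture~\ref{dijoinconjecture} for the specific pair $(T_{k-1}, \overrightarrow{C_3})$. Second, for every $R$ with $\mathrm{inv}(R) \ge 1$, associativity of dijoin gives $T_k \Rightarrow R = T_{k-1} \Rightarrow (\overrightarrow{C_3} \Rightarrow R)$. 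Since $\overrightarrow{C_3} \Rightarrow R$ contains the directed $3$-cycle, its inversion number is at least $1$, and Theorem~\ref{thm1.2} applied to $T_{k-1}$ yields
\[
\mathrm{inv}(T_k \Rightarrow R) \le (k-1) + \mathrm{inv}(\overrightarrow{C_3} \Rightarrow R) - 1 \le (k-1) + \bigl(1 + \mathrm{inv}(R)\bigr) - 1 < k + \mathrm{inv}(R).
\]

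The hard part will be the lower bound $\mathrm{inv}(T_k) \ge k$. Because Conjecture~\ref{dijoinconjecture} is known to fail in general, a $(k-1)$-decycling family of $T_{k-1} \Rightarrow \overrightarrow{C_3}$ might exist by exploiting the bipartite arcs between $V(T_{k-1})$ and $V(\overrightarrow{C_3})$ to ``share'' inversions across the two sides. Ruling this out would require a careful combinatorial argument analyzing how the restrictions $X_i \cap V(T_{k-1})$ and $X_i \cap V(\overrightarrow{C_3})$ of a putative small decycling family $(X_i)$ can jointly decycle both parts; if this lower bound cannot be established for $\overrightarrow{C_3}$, one could try to replace $\overrightarrow{C_3}$ by a more rigid gadget of inversion number $1$, or turn to a blow-up tournament that enforces the required inversion number through a richer internal structure.
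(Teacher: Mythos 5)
This statement is a conjecture of Aubian et al.\ that the paper \emph{disproves}, not proves; attempting to establish it is the wrong direction entirely. The paper's main results (Theorem~\ref{thm1.3} together with Theorem~\ref{direction}, packaged as Theorem~\ref{mainresult}) show that for every even \(k \ge 2\) and \emph{every} oriented graph \(R\) with \(\mathrm{inv}(R)=k\), one has \(\mathrm{inv}(R \Rightarrow \overrightarrow{C_3}) = k+1\). Hence for any even \(k \ge 4\) and any tournament \(T_k\) with \(\mathrm{inv}(T_k)=k\), the choice \(R=\overrightarrow{C_3}\) gives \(\mathrm{inv}(T_k \Rightarrow R) = k + \mathrm{inv}(R)\), so no tournament with the conjectured property exists for even \(k\). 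The "hard part" you flag at the end --- the lower bound needed to make your even-\(k\) construction work --- is not merely hard; what you would need is impossible, because the statement you are trying to prove is false in the even case.

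Your construction also fails on its own terms, independently of the paper's results. Setting \(T_k := T_{k-1} \Rightarrow \overrightarrow{C_3}\) and hoping for \(\mathrm{inv}(T_k)=k\) contradicts the very theorem you invoke: Theorem~\ref{thm1.2} applied to \(T_{k-1}\) with \(R=\overrightarrow{C_3}\) yields
\[
\mathrm{inv}(T_{k-1} \Rightarrow \overrightarrow{C_3}) \le (k-1) + \mathrm{inv}(\overrightarrow{C_3}) - 1 = k-1,
\]
so your \(T_k\) has inversion number at most \(k-1\), never \(k\). (Your odd-\(k\) observation is correct but is just a restatement of Theorem~\ref{thm1.2}; the content of the conjecture was precisely whether the even case could also be handled, and the paper's answer is no.) The correct task here is to read the statement as a claim to be refuted, and the refutation is exactly the application of Theorem~\ref{mainresult} with \(L=\overrightarrow{C_3}\) described above.
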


Independently and almost simutaneously, Alon el at. \cite{2212.11969} exhibited a  tournament \(R\) with 9 vertices such that \(\mathrm{inv}(R)=\mathrm{inv}(\overrightarrow{C_3} \Rightarrow R)=3\), where \(\overrightarrow{C_3}\) is the directed cycle on three vertices. They also give the following conjecture, which is similar with Conjecture \ref{conjecture}.

\begin{conjecture}[Alon el at. \cite{2212.11969}]\label{conjecturealon}
    For all $l,r\in \mathbb{N}$ with \(l \ge 3\) or $r\ge 3$, there exists oriented graphs \(L\) and $R$ with \(\mathrm{inv}(L)=l\) and  \(\mathrm{inv}(R)=r\), but  \(\mathrm{inv}(L\Rightarrow R)<l+r\).
\end{conjecture}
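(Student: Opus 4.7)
The plan is to construct, for each pair $(l,r) \in \mathbb{N}^2$ satisfying $l \ge 3$ or $r \ge 3$, explicit oriented graphs $L$ and $R$ with $\mathrm{inv}(L) = l$, $\mathrm{inv}(R) = r$, and $\mathrm{inv}(L \Rightarrow R) < l + r$. First I would invoke the symmetry remark from the paper: inverting the whole vertex set preserves inversion numbers and converts $L \Rightarrow R$ into $\mathrm{Inv}(R; V(R)) \Rightarrow \mathrm{Inv}(L; V(L))$, so a construction for the ordered pair $(l,r)$ automatically yields one for $(r,l)$ after relabelling. It therefore suffices to handle the case $l \ge 3$, split into the subcases $l$ odd and $l$ even.

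For odd $l \ge 3$, Theorem \ref{thm1.2} furnishes the example directly. Let $L = T_l$ be the Aubian et al.\ tournament, and let $R$ be any oriented graph with $\mathrm{inv}(R) = r$ (which exists for every $r \ge 1$ by known constructions). Theorem \ref{thm1.2} then gives $\mathrm{inv}(L \Rightarrow R) \le l + r - 1 < l + r$, completing this subcase for all $r \ge 1$.

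For even $l \ge 4$, no existing result in the literature applies, and this is the main challenge. The natural plan is to modify $T_{l-1}$ so that the inversion number increases by exactly one while the saving family on the dijoin is preserved. Concrete candidates include (i) the dijoin $L = T_{l-1} \Rightarrow \overrightarrow{C_3}$, hoping that $\mathrm{inv}(L) = l$ and that any decycling family of size $(l-1) + r - 1$ for $T_{l-1} \Rightarrow R$ can be extended by a single inversion handling the appended $\overrightarrow{C_3}$; (ii) adjoining one carefully oriented absorbing vertex to $T_{l-1}$ whose arc pattern to and from $T_{l-1}$ is tuned so that both the inversion number rises by exactly one and the existing saving family still decycles the enlarged dijoin after adding one inversion; or (iii) applying a blow-up construction analogous to the one this paper develops in its second half, using a small tournament whose inversion number can be controlled parametrically.

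The hard part will be showing that at least one of these candidates satisfies both $\mathrm{inv}(L) = l$ and admits a decycling family of size $l + r - 1$ on $L \Rightarrow R$ for some $R$ with $\mathrm{inv}(R) = r$. The saving argument of Theorem \ref{thm1.2} relies on a delicate parity-sensitive pairing of inverted sets across the dijoin, and it is not at all obvious that bumping the inversion number up by one preserves this pairing; the lower bound $\mathrm{inv}(L) \ge l$ also typically has to be verified by a separate combinatorial argument, since the naive upper bound is easy. Indeed, the paper's own main theorem --- $\mathrm{inv}(\overrightarrow{C_3} \Rightarrow D) = \mathrm{inv}(D) + 1$ whenever $\mathrm{inv}(D)$ is even --- is precisely the statement that the most obvious candidate construction fails to save any inversions, so any successful realisation of the above plan must circumvent this obstruction with a genuinely new gadget on the even side.
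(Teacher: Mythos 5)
The statement you were given is a conjecture that this paper \emph{disproves}, and your attempt to prove it cannot succeed. Theorem \ref{mainresult} (built on Theorem \ref{thm1.3} and Corollary \ref{corkjoin}) shows that for every even $k \ge 2$ and \emph{all} oriented graphs $L$, $R$ with $\mathrm{inv}(L)=1$ and $\mathrm{inv}(R)=k$ one has $\mathrm{inv}(L \Rightarrow R)=\mathrm{inv}(R \Rightarrow L)=1+k$. Hence for the pairs $(l,r)=(2m,1)$ with $m \ge 2$ --- which satisfy the hypothesis $l \ge 3$ --- there are no witnesses $L$, $R$ with $\mathrm{inv}(L\Rightarrow R)<l+r$, and the conjecture is false. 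This is precisely the subcase you isolate as ``the main challenge'': even $l \ge 4$ with $r=1$. None of your candidate gadgets (i)--(iii) can work there, because the obstruction is not specific to the $\overrightarrow{C_3}$ construction: the rank argument over $\mathbb{F}_2$ (Lemmas \ref{lemma2.1} and \ref{thm2.2}, together with Lemma \ref{cor ext} and Theorem \ref{direction}) yields a lower bound valid for \emph{every} choice of the two factors with inversion numbers $2m$ and $1$. You in fact quote the relevant theorem in your final sentence, but you read it as an obstacle to be circumvented by ``a genuinely new gadget'' rather than as what it is: a refutation of the statement for these parameters.

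Your handling of the other cases is reasonable as far as it goes: the symmetry reduction via inverting the whole vertex set is the same observation the paper records, and odd $l \ge 3$ with arbitrary $r \ge 1$ does follow from Theorem \ref{thm1.2}. But even setting aside the falsity of the even case, the proposal is a plan rather than a proof: the even subcase is left as a list of unverified candidates, with both the lower-bound verification $\mathrm{inv}(L)=l$ and the construction of a saving decycling family explicitly deferred. The correct resolution is to prove the opposite inequality for $(l,r)=(2m,1)$, $m\ge 2$, which is what the body of this paper does.
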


In this paper, we prove the following theorem and then disprove both Conjecture \ref{conjecture} and Conjecture \ref{conjecturealon}. It is quite surprising and tells us that  Conjecture \ref{dijoinconjecture} is still correct in some cases.

\vspace{.2cm}
\begin{theorem}\label{mainresult}
	Let \(k \ge 2\) be an even integer. For all oriented graphs \(L\) and $R$ with \(\mathrm{inv}(L)=1\) and  \(\mathrm{inv}(R)=k\), we have
 \(\mathrm{inv}(L \Rightarrow R)=\mathrm{inv}(R \Rightarrow L)=\mathrm{inv}(L)+\mathrm{inv}(R)=1+k\).
\end{theorem}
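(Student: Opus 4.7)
The upper bound $\mathrm{inv}(L\Rightarrow R)\le 1+k$ is immediate: combining a $1$-element decycling family of $L$ with a minimum $k$-decycling family of $R$ yields a $(1+k)$-family whose members each lie entirely on one side of the dijoin, so the bridging arcs are untouched and $L$-acyclicity followed by $R$-acyclicity gives acyclicity of $L\Rightarrow R$. The second equality $\mathrm{inv}(R\Rightarrow L)=1+k$ follows from the first via the symmetry $\mathrm{inv}(D)=\mathrm{inv}(\mathrm{Inv}(D;V(D)))$ noted in the introduction: inverting the full vertex set turns $R\Rightarrow L$ into the arc-reversed dijoin from $\overline{L}$ to $\overline{R}$, and $\mathrm{inv}(\overline{L})=1$, $\mathrm{inv}(\overline{R})=k$, so the $L\Rightarrow R$ bound applies.

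For the lower bound $\mathrm{inv}(L\Rightarrow R)\ge 1+k$, I would assume for contradiction a $k$-decycling family $(X_1,\dots,X_k)$ of $L\Rightarrow R$ and encode it via labels $\phi\colon V(L)\to\fk$ and $\psi\colon V(R)\to\fk$ (the $i$-th coordinate being $1$ iff the vertex lies in $X_i$), so that an arc $uv$ is reversed iff $\phi(u)\cdot\psi(v)=1$, with analogous formulas for arcs within $L$ and within $R$. Since $\mathrm{inv}(R)=k$, the restriction $(X_i\cap V(R))_{i=1}^{k}$ is forced to be a minimum decycling family of $R$. Fixing a topological order $\sigma$ of the inverted digraph, I observe that for each $u\in V(L)$ the set $U(u):=\{v\in V(R):\phi(u)\cdot\psi(v)=1\}$ coincides with the $\sigma$-prefix of $V(R)$ lying below $u$, so the family $\{U(u)\}_{u\in V(L)}$ is a chain under inclusion. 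This ``chain condition'' supplements the separate decycling constraints on $L$ and $R$.

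The central step is the case $L=\overrightarrow{C_3}$ with triangle $a\to b\to c\to a$. After relabeling we may assume $U(a)\subseteq U(b)\subseteq U(c)$; acyclicity of the triangle after inversion then forces $\phi(a)\cdot\phi(b)=\phi(b)\cdot\phi(c)=0$ and $\phi(a)\cdot\phi(c)=1$, while the chain condition confines each $\psi(v)$ to one of four cosets of $V^{\perp}$ where $V=\mathrm{span}\{\phi(a),\phi(b),\phi(c)\}$. The plan is to manufacture a $(k-1)$-decycling family of $R$ from $(X_i\cap V(R))_{i}$ together with the three vectors $\phi(a),\phi(b),\phi(c)$, contradicting $\mathrm{inv}(R)=k$. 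The even-$k$ hypothesis enters precisely here: the induced symmetric bilinear form on the four-coset union admits a one-dimensional collapse exactly when $k$ is even (governed by the parity of the quadratic form $x\cdot x=\sum x_{i}$ over $\fk$), whereas for odd $k\ge 3$ the collapse fails, matching Aubian el at.'s counterexample. For general $L$ with $\mathrm{inv}(L)=1$, $L$ contains a directed cycle; applying the chain condition along this cycle and pairing with the $\overrightarrow{C_3}$ analysis as a prototype (either by isolating three consecutive cycle vertices or by running the triangle argument verbatim on a chosen triple) reduces to the already-handled case.

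The main obstacle is constructing the reduced $(k-1)$-decycling family explicitly and verifying its decycling property in each sub-case, especially when $\phi(a),\phi(b),\phi(c)$ are linearly dependent or when the induced form on $V^{\perp}$ is degenerate. I expect the dependent sub-cases to impose an even more restrictive confinement on $\psi$ and hence yield the contradiction more directly, while the generic rank-$3$ case requires a parity calculation on vector weights that closes only under the even-$k$ hypothesis; getting this calculation right is the delicate part of the argument.
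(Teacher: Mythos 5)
Your outline follows the paper's strategy in spirit (characteristic vectors over \(\mathbb{F}_2\), the block decomposition of \(V(R)\) induced by the positions of the triangle vertices in the final topological order, and a parity obstruction tied to the quadratic form \(x\mapsto x\cdot x\)), and your derivation of \(\mathrm{inv}(R\Rightarrow L)=1+k\) from the \(L\Rightarrow R\) case by inverting the whole vertex set is a legitimate shortcut (the paper routes this through Theorem~\ref{direction} and Corollary~\ref{corkjoin} instead). But there are two genuine gaps. The first is the central step you yourself flag as ``the main obstacle'': producing the \((k-1)\)-decycling family. In the paper this splits into two nontrivial pieces, neither of which you supply. (i) A linear-algebra lemma: every symmetric matrix in \(M_n(\mathbb{F}_2)\) with \(n\) odd factors as \(U^tU\) (Lemma~\ref{lemma2.1}); consequently, for even \(k\), \emph{any} decycling family of a tournament with inversion number \(k\) has characteristic vectors of rank at least \(k\) (Lemma~\ref{thm2.2}). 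This global rank bound, not a property of the ``four-coset union'', is where the parity of \(k\) enters: it is what converts low rank into a shorter decycling family. (ii) A construction (Lemma~\ref{Essential claim}) showing that if \(\mathrm{inv}(\overrightarrow{C_3}\Rightarrow R)=\mathrm{inv}(R)=k\) then some minimum decycling family of \(R\) has rank only \(k-1\): one first shows that the block immediately preceding each of \(u,v,w\) in the final order is non-empty (otherwise the rank already drops), and then translates every vector of one well-chosen block by \(\mathbf{u}+\mathbf{v}\), verifying via the scalar-product table that acyclicity is preserved under the permuted block order. Note that this collapse in (ii) is available for all \(k\), not ``exactly when \(k\) is even''; it is step (i) that fails for odd \(k\), which is why the odd counterexamples exist. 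Without (i) and (ii) worked out, the contradiction is only asserted.

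The second gap is the reduction from general \(L\) with \(\mathrm{inv}(L)=1\) to \(L=\overrightarrow{C_3}\). A shortest directed cycle of \(L\) may have length greater than \(3\), so ``three consecutive cycle vertices'' need not span a directed triangle, and the triangle computation — which uses all three arcs among \(u,v,w\) to obtain \(\mathbf{u}\cdot(\mathbf{v}+\mathbf{w})=1\) and \(\mathbf{w}\cdot(\mathbf{u}+\mathbf{v})=1\) — does not run verbatim on such a triple. The fix is cheap but must be stated: extend \(L\Rightarrow R\) to a tournament with the same inversion number and decycling family (Observation~\ref{extend}); the completion has the form \(T_L\Rightarrow T_R\) with \(\mathrm{inv}(T_L)\ge 1\), so \(T_L\) is non-transitive and contains a directed triangle, whence \(\overrightarrow{C_3}\Rightarrow R\) is a subdigraph and Theorem~\ref{thm1.3} plus Observation~\ref{ob1} give the lower bound. (The paper instead handles general \(L\) through Lemma~\ref{cor ext}.)
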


In conclusion, here are the current results for ``dijoin conjecture". For oriented graphs \(L\) and \(R\), if $\mathrm{inv}(L)=l$ and $\mathrm{inv}(R)=r$, we have $\mathrm{inv}(L \Rightarrow R) =l+r$ for
\begin{equation*}
    (l,r)\in \{(0,0),(0,k),(k,0),(1,1),(1,2k),(2k,1),(2,2)\},
\end{equation*}
where $k$ is a positive integer. Also, there are counterexamples with $\mathrm{inv}(L \Rightarrow R) <l+r$ when either $l$ or $r$ is an odd number at least 3 and $lr\neq 0$. The remaining cases are still open, which are the cases when $l$ and $r$ are both positive even numbers except for $l=r=2$.

On the other hand, a natural generalization of dijoin operation is so-called $k$-join, which is a graph obtained by dijoin $k$ digraphs one after another. Additionally, here is a further generalization.
\begin{definition}[blow-up graph]
    Let \(H\) be a digraph with \(|V(H)|=n\). Label the vertices in \(H\) as \(v_1,v_2,\ldots,v_n\).
    For pairwise vertex disjoint digraphs \(D_1,\ldots, D_n\), define the \emph{blow-up} graph \(H[D_1,D_2, \ldots,D_n]\) is the digraph whose vertex set and arc set are
\begin{equation*}	
\begin{aligned}
	V(H[D_1,D_2, \cdots,D_n])&=\bigcup_{i=1}^n V(D_i),\\
 A(H[D_1,D_2, \cdots,D_n])&=\bigcup_{v_iv_j \in A(H)}\{xy| x \in V(D_i) , y \in V(D_j)\}
     \cup \bigcup_{k=1}^n A(D_k).
\end{aligned}
\end{equation*}
\end{definition}
Moreover,  if \(D_i=D\) are all the same, we simply denote \(H[D_1, D_2, \ldots , D_n]\) by \(H[D]_n\). If \(V(H)=\{x,y\}\) and \(A(H)=\{xy\}\), it gives dijoin \(H[D_1,D_2]=D_1 \Rightarrow D_2\). Also, $k$-join comes from \(H=TT_k\), which is a tournament with order \(k\) and arc set \(A(TT_k)=\{v_iv_j|i<j\}\), then we denote \(TT_k[D_1, D_2, \ldots , D_k]\) by \([D_1, D_2, \ldots , D_k]\) and denote \(TT_k[D]_k\) by \([D]_k\).

To prove Theorem \ref{mainresult}, we start with the case when $L=\overrightarrow{C_3}$.

\vspace{.2cm}
\begin{theorem}\label{thm1.3}
	Let \(k \ge 2\) be an even integer and \(D\) be an oriented graph with \(\mathrm{inv}(D)=k\). Then
 \(\mathrm{inv}(\overrightarrow{C_3} \Rightarrow D)=\mathrm{inv}(\overrightarrow{C_3})+\mathrm{inv}(D)=1+k\).
\end{theorem}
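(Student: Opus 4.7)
The upper bound $\mathrm{inv}(\overrightarrow{C_3} \Rightarrow D) \leq 1 + k$ is immediate: if $(Z_1, \ldots, Z_k)$ is an optimal decycling family of $D$, then $(V(\overrightarrow{C_3}), Z_1, \ldots, Z_k)$ is a $(k+1)$-decycling family of $\overrightarrow{C_3} \Rightarrow D$, since inverting $V(\overrightarrow{C_3})$ decycles the 3-cycle and leaves the inter-part arcs untouched, while the $Z_i$'s act only inside $D$. So the task is the lower bound. I would argue by contradiction: suppose $(X_1, \ldots, X_k)$ is a $k$-decycling family of $\overrightarrow{C_3} \Rightarrow D$. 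Encode each vertex by its characteristic vector $f(v) = (\mathbf{1}_{v \in X_i})_{i=1}^k \in \mathbb{F}_2^k$, so that an arc $uv$ is reversed exactly when $f(u)\cdot f(v) = 1$ over $\mathbb{F}_2$. Writing $V(\overrightarrow{C_3}) = \{a,b,c\}$ with the cycle $a\to b\to c\to a$, set $\alpha = f(a)$, $\beta = f(b)$, $\gamma = f(c)$, and $\delta(v) = f(v)$ for $v \in V(D)$.

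Acyclicity of the resulting digraph imposes four structural conditions. (i) $(\alpha\cdot\beta, \beta\cdot\gamma, \gamma\cdot\alpha) \notin \{(0,0,0),(1,1,1)\}$, so the final $\overrightarrow{C_3}$ is a transitive tournament with some linear order $\prec$ on $\{a,b,c\}$. (ii) The sets $Z_i = X_i \cap V(D)$ form a $k$-decycling family of $D$. (iii) For each $v \in V(D)$, avoiding 3-cycles through two vertices of $\overrightarrow{C_3}$ and $v$ restricts $\phi(v) := (\alpha\cdot\delta(v),\beta\cdot\delta(v),\gamma\cdot\delta(v))$ to a distinguished 4-element set $A \subseteq \mathbb{F}_2^3$; a direct check shows $A$ is a chain in the coordinatewise order in every case of (i), so this defines a potential $p: V(D) \to \{0,1,2,3\}$ giving the position of $v$ in the order $\prec$. (iv) Ruling out 3-cycles through one vertex of $\overrightarrow{C_3}$ and two vertices of $D$ forces $p$ to be non-increasing along every arc of the final $D$-part. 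A short check then shows that (i)--(iv) are also sufficient for acyclicity of the full digraph, since they exhibit an explicit topological order.

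The heart of the proof is to use these constraints together with $k$ being even to produce a $(k-1)$-decycling family of $D$, contradicting $\mathrm{inv}(D)=k$. Intuitively, the potential $p$ already fixes the orientation of every $D$-arc crossing levels, so the $k$ inversions restricted to $V(D)$ encode more information than strictly necessary. Consider the linear map $L: \mathbb{F}_2^k \to \mathbb{F}_2^3$, $x \mapsto (\alpha\cdot x, \beta\cdot x, \gamma\cdot x)$; then (iii) says $\delta(v) \in L^{-1}(A)$ for every $v$. I would exploit the algebraic structure of $L^{-1}(A)$ together with a change of basis of $\mathbb{F}_2^k$ to show that the bilinear form $(u,v)\mapsto\delta(u)\cdot\delta(v)$ restricted to $L^{-1}(A)$ is realizable using only $k-1$ subsets of $V(D)$. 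The evenness of $k$ enters through the theory of symmetric bilinear forms over $\mathbb{F}_2$: on an even-dimensional space the standard form always has a nonzero self-orthogonal vector, and this provides the pivot for the one-inversion saving.

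The main obstacle is the case analysis driven by (i) and by $\dim\mathrm{span}_{\mathbb{F}_2}(\alpha,\beta,\gamma) \in \{1,2,3\}$; in each subcase the reduction of the $k$-family on $V(D)$ to a $(k-1)$-family takes a slightly different form and the conditions (ii)--(iv) must be revisited to verify that the reduced family still decycles $D$. The small case $k = 2$ illustrates the mechanism: one checks directly that in every valid choice of $(\alpha,\beta,\gamma) \in (\mathbb{F}_2^2)^3$ satisfying (i), the set $L^{-1}(A)$ lies in a $1$-dimensional subspace of $\mathbb{F}_2^2$, so the two inversions collapse to (at most) one, forcing $\mathrm{inv}(D) \leq 1$ and contradicting $\mathrm{inv}(D)=2$. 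The even-$k$ hypothesis is essential: the odd-$k$ counterexamples of Aubian et al.\ \cite{2212.09188} show that the analogous isotropic-vector step fails in odd dimension.
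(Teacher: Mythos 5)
Your framework coincides with the paper's: characteristic vectors over $\mathbb{F}_2$, the observation that acyclicity of the image of $\overrightarrow{C_3}$ splits $V(D)$ into four blocks according to the scalar products with $\alpha,\beta,\gamma$, and the plan to contradict $\mathrm{inv}(D)=k$ by manufacturing a $(k-1)$-decycling family of $D$. The upper bound and your conditions (i)--(iv) are correct. The gap is in the step you call the heart of the proof.

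You propose to show that the bilinear form $(u,v)\mapsto\delta(u)\cdot\delta(v)$ on the actual characteristic vectors is ``realizable using only $k-1$ subsets,'' i.e.\ that the same Gram matrix is realized by vectors in $\mathbb{F}_2^{k-1}$. This is impossible whenever $\{\delta(v)\}_{v\in V(D)}$ spans $\mathbb{F}_2^{k}$: a spanning set contains $k$ independent vectors whose Gram matrix $V^{t}V$ is nonsingular of rank $k$, while any matrix of the form $U^{t}U$ with $U$ having $k-1$ rows has rank at most $k-1$. The set $L^{-1}(A)$ does not rescue this: since $(0,0,0)\in A$ (a vertex placed after $a,b,c$ has all three cross arcs unreversed), $L^{-1}(A)$ contains $\ker L$ together with representatives of the three other cosets, so when $\alpha,\beta,\gamma$ are independent it spans all of $\mathbb{F}_2^{k}$; your $k=2$ computation, where $L^{-1}(A)$ degenerates to a line, is not representative of $k\ge 4$. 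What is missing is a step that \emph{modifies} the decycling family rather than re-realizing it. The paper's Lemma~\ref{Essential claim} does exactly this: it shows the blocks $P,Q,R$ are all nonempty, expands $\alpha+\beta$ over the block representatives $\mathbf{p_0},\mathbf{q_0},\mathbf{r_0}$ modulo $\Phi=\{\alpha,\beta,\gamma\}^{\perp}$, and translates every vector of one block by $\alpha+\beta$; this uniformly flips or preserves the arcs inside that block and merely permutes the blocks in the topological order, so the result is still a decycling family, now of rank $k-1$. Separately, your stated reason for needing $k$ even is off: the standard form over $\mathbb{F}_2$ has nonzero isotropic vectors in every dimension at least $2$. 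The relevant fact is that $\mathbf{1}\cdot x=x\cdot x$ for all $x$, so $\mathbf{1}$ is isotropic exactly when $k$ is even; this is what makes every symmetric matrix of odd size $k-1$ factor as $U^{t}U$ with $U\in M_{k-1}(\mathbb{F}_2)$ (the paper's Lemma~\ref{lemma2.1}), which converts the rank-$(k-1)$ family into a genuine $(k-1)$-decycling family and yields the contradiction. Both ingredients --- the block-translation modification and the odd-size factorization --- must be supplied before your outline becomes a proof.
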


The main technique to prove Theorem \ref{thm1.3} is to investigate the rank of some related matrices over $\mathbb{F}_2$. From the same idea, we have some other interesting results. We provide an equivalent condition when \(\mathrm{inv}(\overrightarrow{C_3} \Rightarrow D)= \invd =k\), and prove the following theorems.

\begin{theorem}\label{direction}
	Let \(D\) be oriented graph. Then
 \(\mathrm{inv}(\overrightarrow{C_3} \Rightarrow D)=\mathrm{inv}(D \Rightarrow \overrightarrow{C_3})\).
\end{theorem}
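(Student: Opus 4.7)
The plan is to first eliminate the asymmetry of the dijoin by the arc-reversal identity, and then finish via the $\mathbb{F}_2$-vector analysis already used for Theorem~\ref{thm1.3}. For any digraph $H$, applying a fixed family of inversions to $H$ and to $H^{\mathrm{rev}}$ produces digraphs that are the reverses of each other, and acyclicity is reversal-invariant, so $\mathrm{inv}(H)=\mathrm{inv}(H^{\mathrm{rev}})$. Since $(D\Rightarrow\overrightarrow{C_3})^{\mathrm{rev}}=\overrightarrow{C_3}^{\mathrm{rev}}\Rightarrow D^{\mathrm{rev}}$ and $\overrightarrow{C_3}^{\mathrm{rev}}\cong\overrightarrow{C_3}$, this immediately gives
\begin{equation*}
\mathrm{inv}(D\Rightarrow\overrightarrow{C_3}) \;=\; \mathrm{inv}(\overrightarrow{C_3}\Rightarrow D^{\mathrm{rev}}).
\end{equation*}
Hence Theorem~\ref{direction} reduces to proving $\mathrm{inv}(\overrightarrow{C_3}\Rightarrow D)=\mathrm{inv}(\overrightarrow{C_3}\Rightarrow D^{\mathrm{rev}})$, i.e., that $\mathrm{inv}(\overrightarrow{C_3}\Rightarrow D)$ is invariant under reversing every arc of $D$.

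I would prove this invariance by a direct transfer at the level of $\mathbb{F}_2$-labelings. A $k$-decycling family $(X_i)_{i=1}^{k}$ of $\overrightarrow{C_3}\Rightarrow D$ is encoded by vectors $\alpha_1,\alpha_2,\alpha_3\in\fk$ (one for each vertex of $\overrightarrow{C_3}$) and $\beta_v\in\fk$ for $v\in V(D)$, where $\alpha_j$ and $\beta_v$ are the indicator vectors of $\{i:j\in X_i\}$ and $\{i:v\in X_i\}$; an arc between two vertices is reversed iff the inner product of the two corresponding labels is $1\pmod 2$, and the image must be acyclic. Given such a labeling for $\overrightarrow{C_3}\Rightarrow D$, the attempt is to set $\alpha_j':=\alpha_j$ and $\beta_v':=\beta_v+c$ for a single vector $c\in\fk$ to be chosen. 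A short computation shows these new labels decycle $\overrightarrow{C_3}\Rightarrow D^{\mathrm{rev}}$ with the same $k$ provided
\begin{equation*}
c\cdot\alpha_j=0 \ \text{for } j\in V(\overrightarrow{C_3}), \qquad c\cdot(\beta_u+\beta_v)+|c|\equiv 1\pmod 2 \ \text{for every arc } uv\in A(D);
\end{equation*}
the first condition leaves internal $\overrightarrow{C_3}$-arcs and all cross-arcs undisturbed, while the second flips exactly the orientation of every internal $D$-arc, thereby accounting for the passage from $D$ to $D^{\mathrm{rev}}$.

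The main obstacle is to show that such a $c$ exists for some minimum decycling family of $\overrightarrow{C_3}\Rightarrow D$. These constraints form an affine $\mathbb{F}_2$-system whose solvability is governed by a rank condition that depends only on the arc-incidence data of $D$ with the $\beta_v$'s and on the labels $\alpha_j$---data which is symmetric under $D\leftrightarrow D^{\mathrm{rev}}$ (reversing an arc does not change which pair of vertices is joined). The plan is therefore to verify solvability by the very matrix/rank machinery the paper develops while proving Theorem~\ref{thm1.3}; when it fails for a given minimum labeling, one uses the freedom of a basis change in $\fk$, or replaces the family with another attaining the minimum, to remove the obstruction. Once $c$ is in hand, the explicit bijection yields $\mathrm{inv}(\overrightarrow{C_3}\Rightarrow D^{\mathrm{rev}})\le\mathrm{inv}(\overrightarrow{C_3}\Rightarrow D)$; running the same argument with $D$ and $D^{\mathrm{rev}}$ interchanged gives the reverse inequality, completing the proof. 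The technical heart of the argument is precisely this rank-based solvability step, which is why Theorem~\ref{direction} is advertised as an application of the same techniques as Theorem~\ref{thm1.3}.
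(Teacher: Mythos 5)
Your opening reduction is exactly the paper's: reversing all arcs preserves the inversion number, so the theorem reduces to showing that $\mathrm{inv}(\overrightarrow{C_3}\Rightarrow D)$ is invariant under replacing $D$ by $D^{-}$. The gap is in the second half. Your plan is to transfer a minimum decycling family of $\overrightarrow{C_3}\Rightarrow D$ to one of $\overrightarrow{C_3}\Rightarrow D^{-}$ by a shift $\beta_v\mapsto\beta_v+c$, and everything hinges on the existence of a vector $c$ with $c\cdot\alpha_j=0$ and $c\cdot(\beta_u+\beta_v)+c\cdot c=1$ for every arc $uv$ of $D$. You never establish that such a $c$ exists; you only say that solvability is ``governed by a rank condition'' and that, if it fails, one should ``replace the family with another attaining the minimum.'' That existence statement is the entire difficulty, not a routine verification. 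Note how restrictive the system is: since $c\cdot c=\mathbf{1}\cdot c$ over $\mathbb{F}_2$, summing the conditions over the three arcs of any triangle of $D$ forces $c\cdot\mathbf{1}=1$ and then $c\cdot(\beta_u+\beta_v)=0$ for every arc, so for a connected $D$ you need $c$ orthogonal to all three $\alpha_j$, with $c\cdot\mathbf{1}=1$ and $c\cdot\beta_v$ constant on $V(D)$. Whether some optimal family admits such a $c$ is precisely as hard as the theorem itself, and the observation that the incidence data is ``symmetric under $D\leftrightarrow D^{-}$'' does not by itself produce a solution of the affine system.

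The paper closes this gap differently: it never transfers a family of the dijoin at all. Lemma~\ref{odd invertion} characterizes when $\mathrm{inv}(\overrightarrow{C_3}\Rightarrow D)=\mathrm{inv}(D)=k$ purely in terms of $D$ alone, namely $k$ odd and some $k$-decycling family of $D$ whose characteristic vectors are all orthogonal to $\mathbf{1}$ (this is where the real work sits, via the symmetric-matrix factorizations of Lemmas~\ref{lemma2.1} and~\ref{even matrix} and the rank bound of Lemma~\ref{thm2.2}). Since $D$ and $D^{-}$ have exactly the same decycling families, that characterization is manifestly invariant under reversal, so $\mathrm{inv}(\overrightarrow{C_3}\Rightarrow D)=\mathrm{inv}(D)$ if and only if $\mathrm{inv}(\overrightarrow{C_3}\Rightarrow D^{-})=\mathrm{inv}(D^{-})$; combined with the fact that both dijoin inversion numbers lie in $\{\mathrm{inv}(D),\mathrm{inv}(D)+1\}$, the theorem follows. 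If you want to salvage your transfer argument, the honest route is to first prove the orthogonality characterization (or cite Lemma~\ref{odd invertion}) and then observe that $c=\mathbf{1}$, or a suitable variant, satisfies your constraints for the special family it provides; as written, the proposal assumes the hard step.
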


\begin{theorem}\label{abnormal}
    \(\mathrm{inv}([\overrightarrow{C_3} , \overrightarrow{C_3} , D])=\mathrm{inv}(\overrightarrow{C_3} \Rightarrow D)+1\) for every oriented graph \(D\).
\end{theorem}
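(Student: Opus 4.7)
The plan is to prove the two inequalities separately. For the upper bound I use that $[\overrightarrow{C_3},\overrightarrow{C_3},D]$ is exactly the iterated dijoin $\overrightarrow{C_3} \Rightarrow (\overrightarrow{C_3} \Rightarrow D)$; the elementary inequality $\mathrm{inv}(L \Rightarrow R)\le \mathrm{inv}(L)+\mathrm{inv}(R)$ (decycle the two sides separately and observe that the dijoin arcs form a directed cut and cannot participate in any cycle) yields $\mathrm{inv}([\overrightarrow{C_3},\overrightarrow{C_3},D]) \le 1 + \mathrm{inv}(\overrightarrow{C_3} \Rightarrow D)$.

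For the lower bound set $k=\mathrm{inv}(\overrightarrow{C_3} \Rightarrow D)$ and suppose for contradiction that $(X_i)_{i=1}^k$ is a decycling family of $[\overrightarrow{C_3},\overrightarrow{C_3},D]$. Label $C_1=\{a_1,a_2,a_3\}$ and $C_2=\{b_1,b_2,b_3\}$ for the two copies of $\overrightarrow{C_3}$. Following the same $\mathbb{F}_2$-encoding used in the proofs of Theorems \ref{thm1.3} and \ref{direction}, I attach to each vertex $v$ the characteristic vector $\chi(v)\in\fk$ with $\chi(v)_i=1$ iff $v\in X_i$, so that an arc $uv$ of the original digraph is reversed precisely when $\chi(u)\cdot\chi(v)=1$ in $\mathbb{F}_2$. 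Write $\alpha_i=\chi(a_i)$, $\beta_i=\chi(b_i)$, and $f=\chi|_{V(D)}$. Since the induced subdigraphs on $V(C_1)\cup V(D)$ and on $V(C_2)\cup V(D)$ are both copies of $\overrightarrow{C_3}\Rightarrow D$ and must become acyclic under $(X_i)$, the two restricted families are both optimal decycling families of $\overrightarrow{C_3}\Rightarrow D$.

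Now I apply the equivalent condition, established earlier via rank analysis over $\mathbb{F}_2$, for $\overrightarrow{C_3}\Rightarrow D$ to admit a $k$-decycling family: it pins down a rigid rank relation between $\{\alpha_i\}$ and the image $f(V(D))$ in $\fk$, and the same relation holds between $\{\beta_i\}$ and $f(V(D))$. A third set of constraints comes from the induced subdigraph on $V(C_1)\cup V(C_2)$, which is $\overrightarrow{C_3}\Rightarrow\overrightarrow{C_3}$: since this six-vertex graph must be acyclic under $(X_i)$, the two internal 3-cycles must be broken and a topological order on $C_1\cup C_2$ must be consistent with the parities $\alpha_i\cdot\beta_j$. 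The strategy is to collect these three blocks of constraints into a single homogeneous system over $\mathbb{F}_2$ in the coordinates of $\fk$ and show that it cannot be satisfied in dimension $k$, which contradicts the existence of $(X_i)_{i=1}^k$ and so forces $\mathrm{inv}([\overrightarrow{C_3},\overrightarrow{C_3},D])\ge k+1$.

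The hard part will be this last infeasibility step. Both restrictions being optimal is a very strong condition — the $\alpha_i$'s and the $\beta_i$'s must each independently satisfy the rank constraint from the equivalent condition — but they must also be coupled through the bipartite part between $C_1$ and $C_2$. I anticipate splitting into the two cases $\mathrm{inv}(\overrightarrow{C_3}\Rightarrow D)=\mathrm{inv}(D)$ and $\mathrm{inv}(\overrightarrow{C_3}\Rightarrow D)=\mathrm{inv}(D)+1$, since the rigidity of an optimal decycling family of $\overrightarrow{C_3}\Rightarrow D$ behaves differently in the two cases. The right invariant is likely the $\mathbb{F}_2$-rank of the $3\times 3$ matrix $(\alpha_i\cdot\beta_j)$ combined with the mixed-parity constraints coming from $f$, and ruling out its attainability in dimension $k$ should close the argument.
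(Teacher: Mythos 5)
Your upper bound is fine and matches the paper. For the lower bound, however, the proposal stops exactly where the proof has to begin: the ``infeasibility step'' is announced as a strategy (``should close the argument'', ``the right invariant is likely \ldots'') but never carried out, and that step is the entire content of the theorem. As written, the constraints you collect --- both restrictions to $V(C_1)\cup V(D)$ and $V(C_2)\cup V(D)$ are optimal $k$-decycling families of $\overrightarrow{C_3}\Rightarrow D$, and the induced copy of $\overrightarrow{C_3}\Rightarrow\overrightarrow{C_3}$ becomes acyclic --- are not visibly contradictory, and you give no mechanism for showing that the $3\times 3$ Gram matrix $(\alpha_i\cdot\beta_j)$ together with the mixed parities is unattainable in $\mathbb{F}_2^k$. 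So this is a genuine gap, not a routine verification left to the reader.

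The paper closes this gap by a different decomposition that you should compare with yours. It writes $[\overrightarrow{C_3},\overrightarrow{C_3},D]=\overrightarrow{C_3}\Rightarrow H$ with $H=\overrightarrow{C_3}\Rightarrow D$ and applies the machinery already built for a \emph{single} outer triangle: if $\mathrm{inv}(\overrightarrow{C_3}\Rightarrow H)=\mathrm{inv}(H)=k$, then $k$ is odd (Theorem~\ref{thm1.3}) and, by Lemma~\ref{odd invertion}, $H$ has a $k$-decycling family all of whose characteristic vectors lie in the hyperplane $\{x:x\perp\mathbf{1}\}$, hence span a space of rank $k-1$. Restricting to $V(D)$ still gives rank exactly $k-1$ (at least $k-1$ by Lemma~\ref{thm2.2} and Remark~\ref{remark}, at most $k-1$ by containment in the hyperplane), and then the translation-by-$\mathbf{u}+\mathbf{v}$ construction of Lemma~\ref{Essential claim} produces a $k$-decycling family of $D$ of rank $k-2$, contradicting the rank lower bound. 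The crucial ingredient your plan lacks is precisely this hyperplane constraint coming from the \emph{outer} dijoin; the pairwise products between the two triangles, which your plan centres on, play no role in the paper's argument and by themselves do not appear strong enough to force a contradiction. If you want to salvage your route, you would first need to prove an analogue of Lemma~\ref{odd invertion} giving a rigidity statement for optimal decycling families, at which point you would essentially be reproducing the paper's proof.
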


We also have the following generalized result as a corollary. Let \([m]=\{1,2,\ldots,m\}\).

\begin{cor}\label{corkjoin}
    Let \(k \ge 2\) and \(D_1,D_2,\ldots,D_k\) be oriented graphs. Assume that there is $j\in [k]$ such that \(\mathrm{inv}(D_j)\ge 1\) and \(\mathrm{inv}(D_i)=1\)
    for all \(i \in [k]\setminus \{j\}\). Then
   $$\mathrm{inv}([D_1,D_2,\ldots,D_k])=\left\{
      \begin{array}{ll}
                 \sum_{i=1}^k \mathrm{inv}(D_i) -1 & \mbox{if~~ $\mathrm{inv}(\overrightarrow{C_3} \Rightarrow D_j)=\mathrm{inv}(D_j)$,}\\
        \sum_{i=1}^k \mathrm{inv}(D_i)  & \mbox{ otherwise.}
    \end{array}\right.$$
\end{cor}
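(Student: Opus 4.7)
The plan is to establish Corollary \ref{corkjoin} by iterating Theorem \ref{abnormal} together with Theorem \ref{direction}, and then extending from $\overrightarrow{C_3}$'s to arbitrary inversion-number-$1$ factors via a substitution lemma.

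First I would handle the special case in which every $D_i$ with $i\neq j$ is $\overrightarrow{C_3}$. Let $S_{a,b}$ denote the $k$-join $[\overrightarrow{C_3},\ldots,\overrightarrow{C_3},D_j,\overrightarrow{C_3},\ldots,\overrightarrow{C_3}]$ with $a$ copies of $\overrightarrow{C_3}$ before $D_j$ and $b$ after. Theorem \ref{abnormal} applied to the first three parts, with $D$ taken to be $S_{a-2,b}$, gives
\[
\mathrm{inv}(S_{a,b})=\mathrm{inv}(\overrightarrow{C_3}\Rightarrow S_{a-2,b})+1=\mathrm{inv}(S_{a-1,b})+1\quad(a\geq 2).
\]
Using Theorem \ref{direction} and whole-vertex-set inversion I also obtain the symmetric identity $\mathrm{inv}([D,\overrightarrow{C_3},\overrightarrow{C_3}])=\mathrm{inv}(D\Rightarrow\overrightarrow{C_3})+1$ for any oriented $D$; applied with $D=S_{a,b-2}$ it yields $\mathrm{inv}(S_{a,b})=\mathrm{inv}(S_{a,b-1})+1$ for $b\geq 2$. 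Combining these recurrences with the boundary values $\mathrm{inv}(S_{1,0})=\mathrm{inv}(S_{0,1})=\mathrm{inv}(\overrightarrow{C_3}\Rightarrow D_j)$ (the second by Theorem \ref{direction}) and $\mathrm{inv}(S_{1,1})=\mathrm{inv}(\overrightarrow{C_3}\Rightarrow D_j)+1$ (deduced from Theorems \ref{direction} and \ref{abnormal} by rewriting $[\overrightarrow{C_3},D_j,\overrightarrow{C_3}]=(\overrightarrow{C_3}\Rightarrow D_j)\Rightarrow\overrightarrow{C_3}$) produces the closed form
\[
\mathrm{inv}(S_{a,b})=\mathrm{inv}(\overrightarrow{C_3}\Rightarrow D_j)+(a+b-1)\quad(a+b\geq 1).
\]
In particular, the inversion number is independent of the position of $D_j$, and setting $a+b=k-1$ gives exactly the value claimed by the corollary in each of cases (i) and (ii).

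Second, I would reduce the general situation (arbitrary inv-$1$ factors in the non-$D_j$ slots) to this special case via a substitution lemma of the form
\[
\mathrm{inv}([L,D,R])=\mathrm{inv}([L,\overrightarrow{C_3},R])
\]
for every oriented graphs $L,R$ and every oriented graph $D$ with $\mathrm{inv}(D)=1$. Iteratively applying this lemma replaces every $D_i$ (for $i\neq j$) by $\overrightarrow{C_3}$. The intuition is that the decycling behavior of an inv-$1$ factor inside a $k$-join is determined by the bipartition $V(D)=A\sqcup B$ witnessing $\mathrm{inv}(D)=1$, which encodes the same structural information as $\overrightarrow{C_3}$'s three vertices from the viewpoint of a decycling family considered as indicator vectors in $\mathbb{F}_2^m$.

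The main obstacle is the substitution lemma, particularly the direction $\mathrm{inv}([L,D,R])\geq\mathrm{inv}([L,\overrightarrow{C_3},R])$: translating an arbitrary decycling family of $[L,D,R]$ into one of $[L,\overrightarrow{C_3},R]$ of no larger size requires the matrix-rank-over-$\mathbb{F}_2$ machinery that drives the proofs of Theorems \ref{thm1.3}, \ref{direction}, and \ref{abnormal}, since one must carefully track how the bipartite arcs between $L$, $D$, and $R$ constrain the indicator vectors in $\mathbb{F}_2^m$ and exploit the acyclic orderings induced by the witnessing bipartition of $D$. Once this lemma is established, combining it with the recurrence of the first step immediately yields Corollary \ref{corkjoin}.
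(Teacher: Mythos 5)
Your first phase is correct and its bookkeeping matches the paper's conclusion: the recurrences $\mathrm{inv}(S_{a,b})=\mathrm{inv}(S_{a-1,b})+1$ for $a\ge 2$ and $\mathrm{inv}(S_{a,b})=\mathrm{inv}(S_{a,b-1})+1$ for $b\ge 2$, together with the boundary values at $(1,0)$, $(0,1)$, $(1,1)$, do give $\mathrm{inv}(S_{a,b})=\mathrm{inv}(\overrightarrow{C_3}\Rightarrow D_j)+(a+b-1)$, which is the claimed value in both cases. The genuine gap is your second phase. The substitution lemma $\mathrm{inv}([L,D,R])=\mathrm{inv}([L,\overrightarrow{C_3},R])$ with $D$ in the \emph{middle} slot is exactly the hard content of the corollary, and you leave it unproved; you correctly flag it as the main obstacle but offer only the heuristic that an inv-$1$ bipartition ``encodes the same information'' as $\overrightarrow{C_3}$. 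The paper's substitution lemma (Lemma \ref{cor ext}) covers only the two \emph{outermost} positions, i.e.\ $\mathrm{inv}(D\Rightarrow H)=\mathrm{inv}(\overrightarrow{C_3}\Rightarrow H)$ and $\mathrm{inv}(H\Rightarrow D)=\mathrm{inv}(H\Rightarrow \overrightarrow{C_3})$, and this does not transfer to an interior slot: knowing $\mathrm{inv}(D\Rightarrow R)=\mathrm{inv}(\overrightarrow{C_3}\Rightarrow R)$ says nothing a priori about $\mathrm{inv}\bigl(L\Rightarrow(D\Rightarrow R)\bigr)$ versus $\mathrm{inv}\bigl(L\Rightarrow(\overrightarrow{C_3}\Rightarrow R)\bigr)$, since equality of inversion numbers does not propagate through a further dijoin --- the failure of such propagation is precisely what this paper is about.

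The paper avoids ever needing interior substitution by running a single induction on $k$ that interleaves the two phases you tried to separate. Using Lemma \ref{cor ext} it replaces only the first factor (and, after a rotation by Theorem \ref{direction}, the last factor) by $\overrightarrow{C_3}$; it then writes $[\overrightarrow{C_3},D_2,\ldots,D_{k-1},\overrightarrow{C_3}]=[\overrightarrow{C_3},D_2,\ldots,D_{k-1}]\Rightarrow\overrightarrow{C_3}$, rotates this to $[\overrightarrow{C_3},\overrightarrow{C_3},D_2,\ldots,D_{k-1}]$ by Theorem \ref{direction}, and absorbs one $\overrightarrow{C_3}$ by Theorem \ref{abnormal} to land on the $(k-1)$-join $[\overrightarrow{C_3},D_2,\ldots,D_{k-1}]$, to which the induction hypothesis applies. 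Every substitution therefore happens at an end of a dijoin, where the rank-over-$\mathbb{F}_2$ argument behind Lemma \ref{cor ext} is available. If you reorganize your argument along these lines, your phase-one recurrences become the all-$\overrightarrow{C_3}$ instance of that same induction and the unproved middle-substitution lemma is no longer needed.
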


Theorem \ref{mainresult} is actually a special case of this corollary.

\noindent \textbf{\emph{Proof of Theorem \ref{mainresult}.}}
Since $\mathrm{inv}(R)=k$ is even, we have $\mathrm{inv}(\overrightarrow{C_3}\Rightarrow R)=\mathrm{inv}(R)+1$ by Theorem \ref{thm1.3}. From Corollary \ref{corkjoin}, we are done.
   \hfill $\square$\par

Theorem \ref{direction} seems very reasonable but it is non-trivial. In fact, we believe that the direction of the dijoin operation does not affect the inversion number in general.

\begin{conjecture}\label{conj dir}
    For every oriented graphs \(L\) and \(R\), \(\mathrm{inv}(L \Rightarrow R)= \mathrm{inv}(R \Rightarrow L)\).
\end{conjecture}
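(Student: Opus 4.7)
The plan is to extend the \(\mathbb{F}_{2}\)-matrix technique underlying Theorems \ref{thm1.3} and \ref{direction}. A \(k\)-decycling family of \(L\Rightarrow R\) can be encoded as a pair of matrices \(M\in\mathbb{F}_{2}^{|V(L)|\times k}\) and \(N\in\mathbb{F}_{2}^{|V(R)|\times k}\) whose columns are the indicator vectors of the inversions restricted to \(V(L)\) and \(V(R)\). The family is decycling if and only if the internal digraphs \(\bar L:=\mathrm{Inv}(L;\mathrm{col}(M))\) and \(\bar R:=\mathrm{Inv}(R;\mathrm{col}(N))\) are both acyclic and the bipartite flip matrix \(F:=MN^{T}\) is \emph{staircase}: there exist topological orderings \(\sigma\) of \(\bar L\) and \(\tau\) of \(\bar R\) such that, with rows ordered by \(\sigma\) and columns by \(\tau\), the \(1\)-entries of every row form an initial segment of the columns. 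The minimum such \(k\) is \(\mathrm{inv}(L\Rightarrow R)\), and the analogous characterization with \(F^{T}\) in place of \(F\) computes \(\mathrm{inv}(R\Rightarrow L)\).

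A key observation is that the bipartite obstruction is intrinsically symmetric: for any \(0/1\) matrix, the rows form a chain under inclusion if and only if the columns do, so \(F\) is staircase-able if and only if \(F^{T}\) is. The asymmetry between the two sides of the conjecture therefore lives entirely in the coupling with the topological orderings: substituting the same pair \((M,N)\) into \(R\Rightarrow L\) would require the \emph{reverses} \(\sigma^{R}\) and \(\tau^{R}\) to be topological orderings of \(\bar L\) and \(\bar R\), which only happens when \(\bar L\) and \(\bar R\) are antichains. Thus one cannot reuse \((M,N)\) verbatim.

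The proposed strategy is to build \((M',N')\) of the same column count \(k\) such that \(\mathrm{Inv}(L;M')=\bar L^{-1}\) and \(\mathrm{Inv}(R;N')=\bar R^{-1}\) (whose topological orderings are precisely \(\sigma^{R}\) and \(\tau^{R}\)), while the new flip matrix \(F'=M'N'^{T}\) is staircase in these reversed orders. Passing from \(\bar L\) to \(\bar L^{-1}\) is exactly the inversion \(V(L)\), i.e., appending an all-ones column to \(M\); one then tries to absorb this extra column back into the existing \(k\) columns by invertible row/column operations over \(\mathbb{F}_{2}\), all the while tracking how such operations perturb the bipartite block \(MN^{T}\) and verifying that the perturbed block does give a staircase in \((\sigma^{R},\tau^{R})\). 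Combined with the identity \(\mathrm{inv}(L\Rightarrow R)=\mathrm{inv}(R^{-1}\Rightarrow L^{-1})\), which holds because the total inversion \(V(L)\cup V(R)\) reverses every arc, such a construction would close the loop and prove the conjecture.

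The main obstacle, I expect, is precisely this simultaneous control: the internal Gram blocks \(MM^{T}\) and \(NN^{T}\) (which determine \(\bar L\) and \(\bar R\)) and the bipartite block \(MN^{T}\) (which governs the staircase pattern) are tightly coupled, and any invertible \(\mathbb{F}_{2}\)-operation that alters one generally alters the others. In the proof of Theorem \ref{direction} this coupling is tractable because \(L=\overrightarrow{C_{3}}\) has only three vertices, so the internal side has essentially no flexibility to lose. For general \(L,R\) the clean reformulation one likely needs is a rank-theoretic or symmetric-bilinear-form invariant of the unordered pair \(\{L,R\}\) that computes \(\mathrm{inv}(L\Rightarrow R)\); identifying and verifying such an invariant appears to be the crux of the problem.
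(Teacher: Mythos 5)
This statement is Conjecture \ref{conj dir}; the paper does not prove it and offers only the special case $L=\overrightarrow{C_3}$ as Theorem \ref{direction}, so there is no proof of record to match your attempt against. More importantly, what you have written is not a proof either: it is a programme whose decisive step is explicitly deferred. The genuine gap is the construction of the pair $(M',N')$. You correctly observe that passing from $\bar L$ to $\bar L^{-1}$ amounts to appending an all-ones column, and that one would then need to ``absorb'' that column back into $k$ columns while simultaneously preserving the two Gram blocks $MM^{T}$, $NN^{T}$ and forcing the cross block $M'N'^{T}$ into a staircase for the reversed orders. But no such absorption is exhibited, and this simultaneous control is not a technical detail to be checked later --- it \emph{is} the conjecture. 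Likewise, the identity $\mathrm{inv}(L\Rightarrow R)=\mathrm{inv}(R^{-1}\Rightarrow L^{-1})$ (inverting the whole vertex set, as noted in the paper's introduction) only trades the problem for relating $R^{-1}\Rightarrow L^{-1}$ to $R\Rightarrow L$; these two digraphs differ by reversing the internal arcs of $R$ and $L$ but not the cross arcs, which is not an inversion, so the ``loop'' you describe does not close. Two smaller issues: your acyclicity criterion needs, in addition to each row being an initial segment of $\tau$, that these initial segments are nested consistently with $\sigma$ (otherwise an internal arc of $\bar L$ can still create a cycle through $R$); and the column/row chain symmetry of $0/1$ matrices, while true, does not by itself transport the coupling with the topological orders, as you acknowledge.

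For comparison, the paper's proof of the $\overrightarrow{C_3}$ case does not manipulate a staircase cross block at all. It establishes in Lemma \ref{odd invertion} an equivalent certificate for $\mathrm{inv}(\overrightarrow{C_3}\Rightarrow D)=\mathrm{inv}(D)$, namely the existence of a $k$-decycling family of $D$ all of whose characteristic vectors are orthogonal to $\mathbf{1}$; since $D$ and $D^{-}$ share decycling families, this certificate is manifestly invariant under reversing all arcs of $D$, and Theorem \ref{direction} follows because $\mathrm{inv}(D\Rightarrow\overrightarrow{C_3})=\mathrm{inv}(\overrightarrow{C_3}\Rightarrow D^{-})$. The lesson is that the tractable route in the known case is a reversal-invariant certificate attached to $D$ alone, made possible because $\overrightarrow{C_3}$ contributes essentially one bit of data. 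If you want to pursue your plan, the missing ingredient is precisely such an invariant of the unordered pair $\{L,R\}$ computing $\mathrm{inv}(L\Rightarrow R)$; until that is identified and verified, the conjecture remains open and your proposal does not constitute a proof.
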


Basing on Theorems~\ref{abnormal}, we also give the following conjecture.

\begin{conjecture}
    For tournaments \(T_1,T_2\) with \(\mathrm{inv}(T_1) \ge 2\),
    we have \(\mathrm{inv}(T_1 \Rightarrow T_2) > \mathrm{inv}(T_2)\).
\end{conjecture}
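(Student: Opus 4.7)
The plan is to argue by contradiction. Suppose \(\mathrm{inv}(T_1 \Rightarrow T_2) = \mathrm{inv}(T_2) = k\) with \(\mathrm{inv}(T_1) \ge 2\). Fix an optimal \(k\)-decycling family \((X_1, \ldots, X_k)\) of \(T_1 \Rightarrow T_2\), and encode each vertex by its characteristic vector \(f(v) \in \mathbb{F}_2^k\) (so \(f(v)_i = 1\) iff \(v \in X_i\)). Let \(\sigma\) be a topological order on \(V(T_1 \Rightarrow T_2)\) witnessing that the inverted digraph is acyclic; then \(\sigma_2 := \sigma|_{V(T_2)}\) together with \((X_i \cap V(T_2))\) is an optimal \(k\)-decycling scheme for \(T_2\), while \(\sigma_1 := \sigma|_{V(T_1)}\) together with \((X_i \cap V(T_1))\) is a (possibly wasteful) \(k\)-decycling scheme for \(T_1\).

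The key structural observation concerns the bipartite arcs. An original arc \(u \to v\) with \(u \in V(T_1), v \in V(T_2)\) is reversed iff \(\langle f(u), f(v)\rangle = 1\) in \(\mathbb{F}_2\); since \(\sigma\) extends \(\sigma_2\), for each \(u \in V(T_1)\) the set \(A(u) := \{v \in V(T_2) : \langle f(u), f(v)\rangle = 1\}\) must be an initial segment of \(\sigma_2\). Writing \(h_1, \ldots, h_{n_2}\) for the vectors \(f(v)\) listed in the order \(\sigma_2\), and setting \(V := \mathrm{span}_{\mathbb{F}_2}(h_1, \ldots, h_{n_2})\) and \(W := V^{\perp}\), each vector \(f(u)\) with \(u \in V(T_1)\) must lie in one of the at most \(n_2+1\) affine translates \(W_0, W_1, \ldots, W_{n_2}\) of \(W\), where \(W_p\) corresponds to the prefix length \(|A(u)| = p\).

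The remaining plan is to exploit this rigid prefix structure together with \(\mathrm{inv}(T_1) \ge 2\) to produce either a \((k-1)\)-decycling family for \(T_1 \Rightarrow T_2\) or a \(1\)-decycling family for \(T_1\), each giving a contradiction. Partition \(V(T_1)\) according to the value \(p(u) \in \{0, 1, \ldots, n_2\}\); within each class the \(f(u)\) all lie in a common coset of \(W\), so their pairwise inner products are controlled by their \(W\)-components alone. The hope is that, after a careful change of basis adapted to \(V\) and \(W\), the \(k\)-decycling scheme on \(T_1\) can be ``compressed'' to at most \(\dim W + 1\) coordinates while preserving \(\sigma_1\)-compatibility, and simultaneously that this compression does not destroy the bipartite constraints nor the \(T_2\)-scheme.

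The main obstacle is precisely this compression step. Translating \(f(u)\) by any element of \(W\) preserves \(\langle f(u), f(v)\rangle\) for every \(v \in V(T_2)\) and thus respects the prefix structure, but can alter \(\langle f(u), f(u')\rangle\) for \(u, u' \in V(T_1)\) whenever either vector has a nontrivial component outside \(V\) or the translating vector is non-isotropic. Controlling these perturbations in the presence of the degenerate \(\mathbb{F}_2\)-bilinear form --- in particular, its isotropic vectors --- appears to require delicate case analysis or a new invariant, in the spirit of the \(\mathbb{F}_2\)-rank arguments used by the authors to handle \(T_1 = \overrightarrow{C_3}\) in Theorems \ref{thm1.3} and \ref{abnormal}. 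Extending those arguments from the three-vertex case \(\mathrm{inv}(T_1) = 1\) to arbitrary tournaments with \(\mathrm{inv}(T_1) \ge 2\) is the core difficulty, which is presumably why the statement is posed only as a conjecture.
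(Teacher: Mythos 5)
The statement you are addressing is posed in the paper only as a conjecture, motivated by Theorem~\ref{abnormal}; the paper contains no proof of it, so there is nothing to compare your argument against except the rank-over-$\mathbb{F}_2$ machinery (Lemmas~\ref{lemma2.1}, \ref{thm2.2}, \ref{Essential claim}, \ref{even matrix}, \ref{odd invertion}) that the authors developed for the special case $T_1=\overrightarrow{C_3}$. Your proposal is in the same spirit as that machinery, and your setup is sound as far as it goes: assuming $\mathrm{inv}(T_1\Rightarrow T_2)=\mathrm{inv}(T_2)=k$, the observation that each $A(u)$ must be an initial segment of $\sigma_2$ is correct, and the conclusion that the vectors $f(u)$, $u\in V(T_1)$, are confined to at most $n_2+1$ cosets of $W=V^{\perp}$ follows. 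This generalizes the role played by the sets $P,Q,R,S$ and the subspace $\Phi$ in the proof of Lemma~\ref{Essential claim}.

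However, the proposal is not a proof: the decisive step is announced but never carried out. You state that the goal is to ``compress'' the scheme to at most $\dim W+1$ coordinates, or else to extract a $1$-decycling family of $T_1$, and then you acknowledge that controlling the pairwise products $\langle f(u),f(u')\rangle$ under translation by elements of $W$ ``appears to require delicate case analysis or a new invariant.'' That is exactly the content of the conjecture, and it is left open. Concretely, two things are missing. First, the hypothesis $\mathrm{inv}(T_1)\ge 2$ never enters the argument except as a named target for a contradiction; any genuine proof must use it essentially, since for $\mathrm{inv}(T_1)=1$ the conclusion is false (Theorem~\ref{thm1.2} and the example of Alon et al.\ give tournaments with $\mathrm{inv}(\overrightarrow{C_3}\Rightarrow R)=\mathrm{inv}(R)$). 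Second, the analogue of the rank-reduction step in Lemma~\ref{Essential claim} --- where the authors exhibit an explicit modified family $\Lambda'$ of rank $k-1$ and verify, coset by coset, that it is still decycling --- is precisely the part that does not obviously survive when $T_1$ has more than one nontrivial coset class and the form restricted to $W$ is degenerate. Until that verification is supplied, no contradiction has been derived, and the statement remains a conjecture.
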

\vskip.2cm

There are lots of results about \(\invn\).
Belkhechine et al. \cite{BELKHECHINE2010703} first proved that
         \( \frac{n-1}{2}-\log n \le \invn \le n-3\)
    for all integer \(n \ge 4\).

Bang-Jensen et al. \cite{2105.04137} found that the constant term in the upper bound can be improved very slightly.
Alon el at. \cite{2212.11969} greatly improved the bounds of \(\invn\), showed that \(\invn = (1+o(1))n\). As the proof of it relies on probabilistic methods, there is no explicit construction for a digraph with
   large inversion number close to \(\invn\). The largest one found is \([\overrightarrow{C_3}]_{\frac{n}{3}}\) whose inversion
   number is \(\frac{n}{3}\).

Let \(Q_n\) be the tournament obtained from the transitive tournament by reversing the arcs of its unique
directed hamiltonian path \((v_1, v_2, \ldots , v_n)\).
Belkhechine el at. \cite{BELKHECHINE2010703} conjectured that
\(\mathrm{inv}(Q_n)= \lfloor \frac{n-1}{2} \rfloor\). It means that \(Q_n\)
is possibly a specific example with much larger inversion number.
It is obvious that \(\mathrm{inv}(Q_n) \le  \lfloor \frac{n-1}{2} \rfloor\) since we can give a \(\lfloor \frac{n-1}{2} \rfloor\)-decycling family \(X_i=\{v_{2i},v_{2i+1}\}\) for \(1 \le i \le
\lfloor \frac{n-1}{2} \rfloor\).

Our another main result of the paper is to give a construction with inversion number slightly more than \(\frac{n}{3}\) using blow-up graphs.

\begin{theorem}\label{thm1.5} Let
     \(T\) be a tournament of order \(n\) and \(\invt = 1\). Then \(\mathrm{inv}(T[D_1, \ldots,D_n]) =n+1\)
     for every oriented graphs \(\{D_i\}_{1 \le i \le n}\) with \(\invdi=1\). Moreover,
     \(\mathrm{inv}(T[\overrightarrow{C_3}]_k)=k+1\).
\end{theorem}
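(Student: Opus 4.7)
The plan is to establish the two inequalities separately. For the upper bound $\mathrm{inv}(T[D_1,\ldots,D_n]) \le n+1$, I would exhibit an explicit $(n+1)$-decycling family. Since $\invt = 1$, pick a single inversion $X_T \subseteq V(T)$ with $\mathrm{Inv}(T;X_T)$ acyclic, and for each $i \in [n]$ pick $Y_i \subseteq V(D_i)$ with $\mathrm{Inv}(D_i;Y_i)$ acyclic. Let $Z := \bigcup_{v_i \in X_T} V(D_i)$ and use the family $(Y_1,\ldots,Y_n,Z)$. Inside each block $D_i$, the only relevant inversions are $Y_i$ and, when $v_i \in X_T$, the whole-block inversion $V(D_i)$ coming from $Z$: the first makes $D_i$ acyclic, while the second merely reverses a block that is already acyclic. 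Between distinct blocks $D_i, D_j$, only $Z$ can flip edges, doing so in bulk iff both $v_i, v_j \in X_T$; the contracted macro digraph on the blocks is therefore $\mathrm{Inv}(T; X_T)$, a DAG. Any directed cycle in the modified blow-up would lie either inside one acyclic block or project to a positive-length closed walk in the macro DAG, both impossible.

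For the lower bound $\mathrm{inv}(T[D_1,\ldots,D_n]) \ge n+1$, suppose towards a contradiction that $(X_1,\ldots,X_n)$ is an $n$-decycling family. Assign to each vertex $v$ its characteristic vector $\vec{x}_v = (\mathbf{1}[v \in X_j])_{j=1}^n \in \mathbb{F}_2^n$, so that the edge $uv$ in the blow-up is reversed iff $\langle \vec{x}_u, \vec{x}_v \rangle \equiv 1 \pmod 2$. Since $\invt = 1$, $T$ contains a directed triangle $v_{i_1} v_{i_2} v_{i_3}$; for every section $u_\ell \in V(D_{i_\ell})$, $\ell = 1, 2, 3$, the induced three-vertex sub-tournament is a copy of $\overrightarrow{C_3}$ which must be destroyed, yielding $\langle \vec{x}_{u_1}, \vec{x}_{u_2}\rangle + \langle \vec{x}_{u_2}, \vec{x}_{u_3}\rangle + \langle \vec{x}_{u_3}, \vec{x}_{u_1}\rangle \equiv 1 \pmod 2$. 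Varying one representative at a time forces the shift subspaces $W_\ell := \mathrm{span}\{\vec{x}_u + \vec{x}_{u'} : u, u' \in V(D_{i_\ell})\}$ to be pairwise orthogonal in $\mathbb{F}_2^n$; moreover each $W_i \ne 0$, because if all vectors in $V(D_i)$ were identical then all or none of the intra-block edges of $D_i$ would flip, and the cycle guaranteed by $\invdi = 1$ would survive.

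The principal task is then to convert this bilinear/affine data into a rank contradiction in $\mathbb{F}_2^n$. My plan is to combine the pairwise orthogonality of the $W_i$'s coming from all directed triangles of $T$ with the macro-triangle parity equations themselves, extracting $n$ independent directions spanned by the $W_i$'s plus one extra $\mathbb{F}_2$-linear relation that cannot simultaneously hold in an $n$-dimensional space; this is in the spirit of the matrix-rank-over-$\mathbb{F}_2$ strategy used in the proofs of Theorems~\ref{thm1.3} and \ref{abnormal}. The most delicate step I anticipate is handling those blocks whose $T$-vertex lies in few (or no) triangles of $T$: for these, one must exploit the cycle structure of $T$ more broadly (using the acyclic inversion $X_T$ of $T$ itself and longer cycles) to still extract the needed nontrivial contribution of $W_i$. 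Finally, the moreover statement is immediate from the main assertion, since $\mathrm{inv}(\overrightarrow{C_3}) = 1$ allows specialising to $D_i = \overrightarrow{C_3}$.
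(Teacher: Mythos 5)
Your upper bound is fine and coincides with the paper's. The lower bound, however, rests on a false starting point. You claim that for a directed triangle $v_{i_1}v_{i_2}v_{i_3}$ of $T$ and representatives $u_\ell\in V(D_{i_\ell})$, destroying the induced copy of $\overrightarrow{C_3}$ forces $\langle \vec{x}_{u_1},\vec{x}_{u_2}\rangle+\langle \vec{x}_{u_2},\vec{x}_{u_3}\rangle+\langle \vec{x}_{u_3},\vec{x}_{u_1}\rangle\equiv 1$. This is not a parity condition: a directed $3$-cycle becomes acyclic precisely when $1$ \emph{or} $2$ of its arcs are reversed (reversing $0$ or all $3$ leaves a $3$-cycle), so the sum above can be either $0$ or $1$ in an acyclic outcome (e.g.\ the decycling family $\{u,v\},\{v,w\}$ of $\overrightarrow{C_3}$ reverses exactly two arcs). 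Consequently the pairwise orthogonality of your shift subspaces $W_\ell$ does not follow; indeed it is false in general, since for $u,u'\in V(D_i)$ and $v,v'\in V(D_j)$ an interleaved final order such as $v<u<v'<u'$ reverses an odd number of the four cross arcs while keeping everything acyclic. Finally, even granting orthogonal nonzero $W_i$'s, $n$ pairwise orthogonal nonzero subspaces fit comfortably in $\mathbb{F}_2^n$ (coordinate axes), so the promised "rank contradiction" needs a further idea that you explicitly leave open, together with the blocks lying in no triangle of $T$.

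The paper's argument supplies exactly the missing mechanism, and it uses the acyclic \emph{total order} produced by the putative decycling family rather than triangle parities. Writing $V(D_i)=\{u_i,v_i,w_i\}$ with $u_i$ minimal in its block in the final order, Claim~\ref{lemma3.1} shows the $n$ vectors $\mathbf{u_i}$ are linearly independent: for a purported relation $\sum_{i\in I}\mathbf{u_i}=0$ one takes $m\in I$ with $u_m$ maximal, so every other $u_{m'}$ lies below both $v_m$ and $w_m$; since all arcs between two blocks point the same way originally, this gives $\mathbf{u_{m'}}\cdot(\mathbf{v_m}+\mathbf{w_m})=0$ while $\mathbf{u_m}\cdot(\mathbf{v_m}+\mathbf{w_m})=1$. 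This yields $k\ge n$ at once. Then Claim~\ref{lemma3.2} shows by induction that if the characteristic vectors of a set of blocks span only $|I|$ dimensions, the corresponding subtournament of $T$ is acyclic; applied with $I=[n]$ when $k=n$, this contradicts $\invt=1$. You would need to replace your triangle-parity step with an argument of this kind (exploiting the final order and the one-directional arcs between blocks) to make the lower bound go through.
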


    We can prove that for oriented graphs \(D_i\) with \(\invdi=1\) ($1\le i\le n$), the  inequality
    \[
         n \le \mathrm{inv}(T[D_1,D_2,\ldots,D_n]) \le n+\mathrm{inv}(T)
    \]holds.
    Then for \(\invt\in \{0,1\}\), we have \(\mathrm{inv}(T[D_1,D_2,\ldots,D_n]) = n+\mathrm{inv}(T)\) by Theorem~\ref{thm1.5}. If \(\invt\notin \{0,1\}\), we have the following result.

    \begin{theorem}\label{example}
        Let \(k \ge 3\) be an  odd integer. Then there exists a tournament \(T_k\) with
        \(\mathrm{inv}(T_k)=k\) such that
        \[
        \mathrm{inv}(T_k[D_1,D_2,\ldots,D_{n}]) \le n+k-1
        \]
        for every oriented graphs \(\{D_i\}_{1 \le i \le n}\) with \(\invdi=1\).
        Here \(n = |V(T_k)|\) is the order of \(T_k\).
           \end{theorem}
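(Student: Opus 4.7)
The plan is to take $T_k$ to be the tournament constructed by Aubian et al.\ in the proof of Theorem~\ref{thm1.2}, which has $\mathrm{inv}(T_k)=k$ and satisfies $\mathrm{inv}(T_k\Rightarrow R)\le k+\mathrm{inv}(R)-1$ for every oriented graph $R$ with $\mathrm{inv}(R)\ge 1$, and then to lift their decycling argument from the dijoin $T_k\Rightarrow R$ to the blow-up $T_k[D_1,\ldots,D_n]$.

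I would first record the naive upper bound $\mathrm{inv}(T_k[D_1,\ldots,D_n])\le n+k$. Pick a decycling family $\{X_1,\ldots,X_k\}$ for $T_k$ and, for each $i\in[n]$, a set $Y_i\subseteq V(D_i)$ such that $\mathrm{Inv}(D_i;Y_i)$ is acyclic. Setting $Z_j:=\bigcup_{v_i\in X_j}V(D_i)$, the family $\{Y_1,\ldots,Y_n,Z_1,\ldots,Z_k\}$ decycles the blow-up: each $Y_i$ only reverses arcs inside $D_i$, while each $Z_j$ either fully inverts a whole block (and hence preserves its acyclicity) or leaves it untouched, so that collectively the $Z_j$'s decycle the $T_k$-skeleton.

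To save the final inversion, I would apply the merging trick behind the Aubian et al.\ bound: for a suitable $j_0\in[k]$ and $v_{i_0}\notin X_{j_0}$, replace the pair $(Z_{j_0},Y_{i_0})$ by the single set $W:=Z_{j_0}\cup Y_{i_0}$, while leaving all other members of the family intact. Inside blocks, $W$ has the same effect as $Z_{j_0}$ and $Y_{i_0}$ combined; between blocks, however, it additionally flips the arcs between $Y_{i_0}$ and $\bigcup_{v_i\in X_{j_0}}V(D_i)$. In the dijoin $T_k\Rightarrow R$, Aubian et al.\ exhibit a distinguished subset of $V(T_k)$ whose merge with a single subset of $V(R)$ is still a valid decycling; my aim is to let $D_{i_0}$ play the role of the ``$R$''-side and transport their cancellation to the blow-up.

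The main obstacle is that the between-block structure of a blow-up is much richer than the single bipartite layer of a dijoin: every arc $v_iv_j\in A(T_k)$ yields a full bipartite family of arcs between $D_i$ and $D_j$, so the extra flips introduced by $W$ could in principle close cycles that run through several blocks, a phenomenon that the dijoin analysis does not have to confront. My plan is to fix a topological order of the decycled skeleton $\mathrm{Inv}(T_k;X_1,\ldots,X_k)$ together with topological orders of the $\mathrm{Inv}(D_i;Y_i)$, combine them lexicographically on the vertex set of the blow-up, and then verify directly---using the Aubian et al.\ choice of $(i_0,j_0)$---that all the unwanted flips produced by $W$ remain backward arcs in this composite order. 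Since the resulting family has cardinality $n+k-1$, this will yield the claimed bound.
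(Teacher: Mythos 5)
There is a genuine gap at the heart of your plan, and it is precisely at the step you flag as the ``main obstacle.'' Replacing the pair $(Z_{j_0},Y_{i_0})$ by the single set $W=Z_{j_0}\cup Y_{i_0}$ gives every arc between $Y_{i_0}$ and $\bigcup_{v_i\in X_{j_0}}V(D_i)$ one extra flip, and nothing in your argument controls these flips. Since the block $D_{i_0}$ in general sits in the interior of the topological order of the decycled skeleton and the blocks indexed by $X_{j_0}$ are scattered through that order, the extra flips close directed triangles through third blocks: take $y\in Y_{i_0}$, $x\in V(D_i)$ with $v_i\in X_{j_0}$ and $D_i$ after $D_{i_0}$ in the order, and $z\in V(D_m)$ with $v_m\notin X_{j_0}$ lying between them; then after the inversions $y\to z\to x\to y$ is a directed cycle. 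This is exactly the multi-block phenomenon that the dijoin analysis never meets, because there the ``$R$''-side is a single block at one end of the order. Moreover, your acceptance criterion --- that ``all the unwanted flips remain backward arcs in this composite order'' --- cannot be right as stated: a backward arc relative to the proposed order is a witness that the order is not topological, so verifying that the flipped arcs are backward would certify failure, not success. No choice of $(i_0,j_0)$ inside the Aubian et al.\ tournament is exhibited that avoids the triangle above.

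The paper's proof resolves this with a different construction and a parity cancellation that your single-pair merge lacks. It does not use the Aubian et al.\ tournament $D$ directly as the base; it sets $T_k:=u_1\Rightarrow D$, adding one dominating vertex. By Theorem~\ref{thm1.2} (applied with $R=\overrightarrow{C_3}$) and Theorem~\ref{direction}, $\mathrm{inv}(\overrightarrow{C_3}\Rightarrow D)=\mathrm{inv}(D)=k$, so Lemma~\ref{odd invertion} provides a $k$-decycling family $(X_i)_{1\le i\le k}$ of $D$ whose characteristic vectors are all orthogonal to $\mathbf{1}$, i.e.\ every vertex of $D$ lies in an \emph{even} number of the $X_i$. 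The block $D_1$ sitting over the new vertex $u_1$ meets none of the blown-up sets $X_i'$, and the saved inversion comes not from merging one pair but from adding $Y_1$ to \emph{all} $k$ sets, taking $Z_i=Y_1\cup X_i'$: an arc inside $Y_1$ is flipped $k$ times ($k$ odd, hence effectively once), while an arc between $Y_1$ and any other block is flipped an even number of times (by the orthogonality to $\mathbf{1}$), hence not at all, so $D_1$ still dominates the rest and the composite order is genuinely acyclic. If you want to rescue your outline, you should import both ingredients: the auxiliary dominating vertex and the even-membership decycling family from Lemma~\ref{odd invertion}.
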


            We have the following conjecture for even case.

    \begin{conjecture}
        Let \(k \ge 2\) be an even  integer, \(T\)  be a tournament with \(\invt=k\)  and \(|V(T)|=n\). For oriented graphs
        \(D_1,D_2,\ldots,D_n\) with \(\invdi=1\),  we have
        \[
        \mathrm{inv}(T[D_1,D_2,\cdots,D_n]) = n+k.
        \]
    \end{conjecture}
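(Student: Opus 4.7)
The upper bound $\mathrm{inv}(T[D_1,\ldots,D_n])\le n+k$ is already recorded in the paragraph preceding Theorem \ref{example}, so what is at stake is the matching lower bound $\mathrm{inv}(T[D_1,\ldots,D_n])\ge n+k$. The plan is to lift the rank-over-$\mathbb{F}_2$ technique announced for Theorem \ref{thm1.3} from the dijoin setting to the blow-up setting, and to apply it at exactly the point where the even-parity obstruction becomes available.

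Suppose for contradiction that $(X_1,\ldots,X_m)$ is a decycling family of $G:=T[D_1,\ldots,D_n]$ with $m\le n+k-1$. Encode it by the labeling $f:V(G)\to\mathbb{F}_2^m$ with $f(v)_\ell=1$ iff $v\in X_\ell$, so that an arc $uv$ of $G$ is reversed in the resulting acyclic digraph iff $\langle f(u),f(v)\rangle=1$ under the standard $\mathbb{F}_2$-bilinear form. For each $i\in[n]$, since $\invdi=1>0$, the map $f$ cannot be constant on $V(D_i)$: fix a base point $v_i^*\in V(D_i)$, set $h_i(v):=f(v)+f(v_i^*)$, and let $U_i:=\mathrm{span}\{h_i(v):v\in V(D_i)\}\subseteq \mathbb{F}_2^m$, so $\dim U_i\ge 1$. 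The base points $v_1^*,\ldots,v_n^*$ induce a copy of $T$ inside $G$, hence $\{f(v_i^*)\}_{i\in[n]}$ is a label-decycling of $T$.

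For any cross-arc $uv$ with $u\in V(D_i)$, $v\in V(D_j)$, $v_iv_j\in A(T)$, the bilinear form expands as
\[
\langle f(u),f(v)\rangle=\langle f(v_i^*),f(v_j^*)\rangle+\langle f(v_i^*),h_j(v)\rangle+\langle h_i(u),f(v_j^*)\rangle+\langle h_i(u),h_j(v)\rangle,
\]
splitting the flip pattern into a ``$T$-part'', two mixed terms and a pure block-block term. I would use this splitting, together with the abundance of short directed cycles available in the blow-up (every arc $v_iv_j\in A(T)$ produces the complete bipartite set of cross-arcs from $V(D_i)$ to $V(D_j)$, and every $D_i$ contains a directed cycle since $\invdi=1$), to show that any nontrivial linear dependence $\sum_i\alpha_i h_i(w_i)=0$ with $w_i\in V(D_i)$ forces a short directed cycle to survive all $m$ inversions. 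This would establish the key decoupling
\[
\dim\bigl(U_1+\cdots+U_n+\mathrm{span}\{f(v_i^*):i\in[n]\}\bigr)\ge n+\dim\mathrm{span}\{f(v_i^*):i\in[n]\}.
\]
Combining with the even-parity rank lower bound from Theorem \ref{thm1.3} applied to the induced $T$-labeling $\{f(v_i^*)\}$, which yields $\dim\mathrm{span}\{f(v_i^*):i\in[n]\}\ge k$ precisely because $k$ is even, gives $m\ge \dim\mathrm{span}\{f(v):v\in V(G)\}\ge n+k$, contradicting $m\le n+k-1$.

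The main obstacle is the decoupling step above. Its conclusion is provably false when $\invt$ is odd, as witnessed by Theorems \ref{thm1.2} and \ref{example}, so any correct argument must crucially exploit the even parity of $k$. I expect the concrete implementation to mirror the $\mathbb{F}_2$-rank parity identity powering the proof of Theorem \ref{thm1.3}: a suitable symmetric block matrix assembled from the $h_i$'s together with the flip pattern of $T$ should have alternating character on a distinguished subspace, forcing its rank to be even and hence large enough to prevent the $U_i$'s from collapsing into $\mathrm{span}\{f(v_i^*)\}$. Controlling the two mixed cross-terms $\langle f(v_i^*),h_j(v)\rangle$ and $\langle h_i(u),f(v_j^*)\rangle$---ensuring they do not ``leak'' dimensions from $U_i$ into the $T$-span---is where the detailed combinatorics of an $\invt$-decycling family of $T$ will have to enter.
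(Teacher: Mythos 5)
This statement is stated in the paper as a \emph{conjecture}: the paper offers no proof of it, only the easy upper bound $\mathrm{inv}(T[D_1,\ldots,D_n])\le n+\invt$, the case $\invt\le 1$ (Theorem~\ref{thm1.5}), and evidence that the matching lower bound genuinely fails for odd $\invt\ge 3$ (Theorem~\ref{example}). So there is no paper proof to compare against, and the question is whether your argument actually closes the open case. It does not. Your reduction is sound up to the point you yourself flag: granting the ``decoupling'' inequality $\dim(U_1+\cdots+U_n+W)\ge n+\dim W$ with $W=\mathrm{span}\{f(v_i^*)\}$, Lemma~\ref{thm2.2} applied to the induced copy of $T$ on the base points gives $\dim W\ge k$ for even $k$ and the contradiction follows. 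But the decoupling inequality \emph{is} the conjecture (modulo Lemma~\ref{thm2.2}), and you offer for it only a hope that ``a suitable symmetric block matrix \ldots should have alternating character on a distinguished subspace.'' No such matrix is exhibited, and no mechanism is given for why the $n$ block-difference directions cannot partially collapse into $W$. The paper's own technique stalls at exactly this point: Claims~\ref{lemma3.1} and \ref{lemma3.2} give $\mathrm{rank}(\Lambda)\ge n$ and force $T$ to be acyclic when equality holds, which handles $\invt\le 1$ only; extracting $k$ \emph{additional} independent directions for the $T$-part on top of the $n$ block directions is the unsolved difficulty.

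Two further concrete problems. First, the decoupling inequality as you state it is false for an arbitrary choice of base points: take $n=1$, $T$ a single vertex ($k=0$), $D_1=\overrightarrow{C_3}$ with the $1$-decycling family $X_1=\{u,v\}$, and choose $v_1^*=u$; then $\dim W=1$, $U_1=\mathrm{span}\{\mathbf{1}\}=W$, and $\dim(U_1+W)=1<n+\dim W=2$. So at minimum you must quantify the claim as ``there exists a choice of base points,'' and then verify that Lemma~\ref{thm2.2} still applies to that choice (it does, since any restriction of the family to $\{v_1^*,\ldots,v_n^*\}$ decycles the induced copy of $T$, but you never address the selection). Second, your diagnostic that the decoupling ``is provably false when $\invt$ is odd'' is not established by Theorems~\ref{thm1.2} and \ref{example}: in the explicit $(n+k-1)$-decycling family built in the proof of Theorem~\ref{example}, the decoupling inequality actually holds with equality and the loss of one dimension occurs in $\dim W=k-1$ (consistent with Remark~\ref{remark}), not in the decoupling. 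This matters because it means you do not yet know \emph{where} the parity of $k$ must enter your intended argument, and hence which of the two halves of your plan carries the real content. As it stands the proposal is a reasonable research programme, not a proof.
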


We detail some of the notation and observations to be used in this paper. If \(xy \in A(D)\), we say \(x\) dominates \(y\), denoted by \(x \rightarrow y\). If every vertex of \(A \subseteq V(D)\) dominates every vertex of \(B \subseteq V(D)\), then we say \(A\) dominates \(B\), denoted by \(A \rightarrow B\).

    Given a tournament \(T\) and the result tournament \(T'\) of \(T\) after inverting. We introduce a sign `\(<\)' to express the ordering of vertices in \(T'\),
    that is \(x < y\) if \(x y\in A(T')\). Then \(x < y<z\) means \(x y,xz,yz\in A(T')\). Let $A,B\subseteq V(T)$.
   Denote \(A < B \) (resp. \(A \le  y \)) if \(x < y\) for every \(x \in A \) and \(y \in B\) (resp.
     \(x < y\) for every \(x \in A \) with \(x \neq y\)). Note that if the result tournament \(T'\) is acyclic, then `\(<\)' is transitive and gives a total order on $V(T')$.

It is common to use vectors in $\mathbb{F}_2$ to investigate the inversion problem of graphs. For a digraph \(D\), a decycling family \((X_i)_{1 \le i \le k}\) and a vertex \(v \in V(D)\), we define the \emph{characteristic vector} \(\textbf{v} \in \mathbb{F}_2^k\) of \(v\) in \((X_i)_{1 \le i \le k}\) where the \(i\)-th element of \(\textbf{v}\) is \(1\) if and only if \(v \in X_i\). For \(\textbf{u},\textbf{v} \in \mathbb{F}_2^k\), we write
\(\textbf{u} \cdot \textbf{v}\) to be the scalar product of \(\textbf{u}\) and \(\textbf{v}\) over \(\mathbb{F}_2^k\).
We say a collection \(\{\textbf{u}_i\}_{i \in I}\) is \emph{orthonormal} if
\(\textbf{u}_i \cdot \textbf{u}_i=1\)
and \(\textbf{u}_i \cdot \textbf{u}_j=0\) for \(i \neq j\). For \(uv \in A(D)\), it is obvious that \(\textbf{u} \cdot \textbf{v} =0\) if and only if \(uv \in A(\mathrm{Inv}(D;(X_i)_{i \in I}))\).

When we study the inversion number, we mainly focus on tournaments because of the following observations.

\begin{observation}[Bang-Jensen et al. \cite{2105.04137}]\label{ob1}
    If \(D'\) is a subdigraph of \(D\), then \(\mathrm{inv}(D') \le \mathrm{inv}(D)\). Actually, if
    \((X_i)_{i \in I}\) is a decycling family of \(D\), then \((X_i \cap V(D'))_{i \in I}\)
    is a decycling family of \(D'\).
\end{observation}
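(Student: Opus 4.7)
The plan is to prove the second, more explicit statement: if $(X_i)_{i \in I}$ is a decycling family of $D$, then the family $(X_i \cap V(D'))_{i \in I}$ is a decycling family of $D'$. This immediately implies $\mathrm{inv}(D') \le \mathrm{inv}(D)$ by taking $(X_i)_{i \in I}$ to be an optimal decycling family of $D$, since $|X_i \cap V(D')| \le |X_i|$ does not even matter here: we only care about the cardinality $|I|$ of the index set.

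The first step is to fix a decycling family $(X_i)_{i \in I}$ of $D$ and set $X_i' := X_i \cap V(D')$ for every $i \in I$. I would then compare the two digraphs $\mathrm{Inv}(D; (X_i)_{i \in I})$ and $\mathrm{Inv}(D'; (X_i')_{i \in I})$ arc by arc. For any arc $e = uv \in A(D') \subseteq A(D)$, both endpoints $u,v$ already lie in $V(D')$, so for each $i$ the condition $u,v \in X_i$ is equivalent to $u,v \in X_i'$. Consequently, an arc of $D'$ is reversed an odd number of times in the ambient inversion process on $D$ if and only if it is reversed an odd number of times in the induced process on $D'$. This shows that $\mathrm{Inv}(D'; (X_i')_{i \in I})$ coincides with the subdigraph of $\mathrm{Inv}(D; (X_i)_{i \in I})$ supported on $V(D')$ and $A(D')$; in particular, it is a subdigraph of $\mathrm{Inv}(D; (X_i)_{i \in I})$.

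To finish, I invoke the trivial fact that every subdigraph of an acyclic digraph is acyclic (any directed cycle in the subdigraph would be a directed cycle in the overdigraph). Since $\mathrm{Inv}(D; (X_i)_{i \in I})$ is acyclic by hypothesis, so is $\mathrm{Inv}(D'; (X_i')_{i \in I})$, establishing that $(X_i')_{i \in I}$ is a decycling family of $D'$ of size $|I|$. Minimizing $|I|$ over decycling families of $D$ yields $\mathrm{inv}(D') \le \mathrm{inv}(D)$.

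There is no real obstacle here; the only point that warrants a line of care is the arc-by-arc parity check, which ensures that restricting the family and restricting the ambient digraph commute. Everything else is definitional, so the argument should fit into a few lines.
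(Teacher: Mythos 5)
Your proof is correct and follows exactly the argument implicit in the observation's own phrasing (the paper cites this from Bang-Jensen et al.\ and gives no separate proof): since every arc of $D'$ has both endpoints in $V(D')$, inverting $X_i$ in $D$ and inverting $X_i \cap V(D')$ in $D'$ act identically on $A(D')$, so $\mathrm{Inv}(D'; (X_i \cap V(D'))_{i \in I})$ is a subdigraph of the acyclic digraph $\mathrm{Inv}(D; (X_i)_{i \in I})$ and is therefore acyclic. Nothing is missing; the arc-by-arc parity check is precisely the right point to make explicit.
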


\begin{observation}[Bang-Jensen et al. \cite{2105.04137}]\label{extend}If \((X_i)_{i \in I}\) is a decycling family of an oriented graph $D$, then $D$ can be extended
to a tournament $T$  such that \(\mathrm{inv}(D)=\mathrm{inv}(T)\) and \((X_i)_{i \in I}\)
    is still a decycling family of \(T\).
\end{observation}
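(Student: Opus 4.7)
The plan is to build $T$ by ``passing to the inverted world, completing to a transitive tournament there, and coming back.'' The key structural fact I will exploit is that inverting by a fixed family is an involution that acts pair-by-pair on vertices, so it carries tournaments to tournaments and subdigraphs to subdigraphs.

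Concretely, I would first set $D' := \mathrm{Inv}(D; (X_i)_{i \in I})$, which is acyclic by hypothesis. Pick any linear extension $v_1, v_2, \ldots, v_n$ of $V(D')$ (that is, a total order with all arcs of $D'$ going from smaller to larger), and complete $D'$ to the transitive tournament $T'$ on $V(D)$ by inserting, for every non-adjacent pair with $i < j$, the arc $v_i v_j$. Then $T'$ is an acyclic tournament that contains $D'$ as a spanning subdigraph.

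Next I would define $T := \mathrm{Inv}(T'; (X_i)_{i \in I})$. Since inversion is involutive, applying the family once more to $T$ recovers $T'$, which is acyclic, so $(X_i)_{i \in I}$ is still a decycling family of $T$. Because inversion acts pair-by-pair on arcs, $T$ is a tournament, and it contains $D$ as a subdigraph: the arcs of $D$ are precisely the inverse images of the arcs of $D'$ under the family, and $D' \subseteq T'$. From the decycling family just exhibited one gets $\mathrm{inv}(T) \leq |I|$, while Observation \ref{ob1} yields $\mathrm{inv}(T) \geq \mathrm{inv}(D)$. Applying the construction to a minimum decycling family of $D$ (so that $|I| = \mathrm{inv}(D)$), which is the typical use, pinches both inequalities and gives $\mathrm{inv}(T) = \mathrm{inv}(D)$.

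There is no serious obstacle here; the entire content is in the idea of first crossing into the inverted world before completing to a tournament. The only subtlety is bookkeeping: one must separate the ``inverted world'' (where $D'$ lives and tournament-completion is carried out) from the ``original world'' (where $D$ and $T$ live), and then use the pair-wise nature of inversion to conclude both $T \supseteq D$ and that $T$ is a tournament after the second application of $(X_i)_{i \in I}$.
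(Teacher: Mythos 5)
Your construction is the standard proof of this observation, and the paper itself offers no proof to compare against (it is stated as a cited result of Bang-Jensen et al.). Every individual step is sound: $D'=\mathrm{Inv}(D;(X_i)_{i\in I})$ is acyclic; completing it to a transitive tournament $T'$ along a topological order and inverting back by the same family yields a tournament $T$ (inversion preserves the underlying undirected graph); $T$ contains $D$ because each arc of $D$ is flipped into an arc of $D'\subseteq T'$ and flipped back by the same parity; and $\mathrm{Inv}(T;(X_i)_{i\in I})=T'$ is acyclic by involutivity, so the family still decycles $T$.

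The one clause that deserves more care is $\mathrm{inv}(D)=\mathrm{inv}(T)$. Your pinching argument gives $\mathrm{inv}(D)\le \mathrm{inv}(T)\le |I|$, so equality is only obtained when $|I|=\mathrm{inv}(D)$, i.e.\ for a minimum decycling family, and you say as much. For a non-minimum family the construction can genuinely overshoot: let $D$ be two vertex-disjoint copies of $\overrightarrow{C_3}$ with no arcs between them, so that $\mathrm{inv}(D)=1$ (invert a set meeting each triangle in exactly two vertices), and let $X_1$, $X_2$ consist of two vertices of the first, respectively second, triangle. Running your construction with the topological order that places the first triangle entirely before the second produces $T=\overrightarrow{C_3}\Rightarrow\overrightarrow{C_3}$, whose inversion number is $2$. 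Hence the observation as literally written --- a single $T$ achieving both conclusions for an \emph{arbitrary} decycling family --- does not follow from this construction. What your argument actually establishes, and what the paper in fact uses everywhere, is the pair of statements ``any decycling family of $D$ survives the extension to some tournament $T\supseteq D$ (whence $\mathrm{inv}(T)\le|I|$)'' and ``applying this to a minimum family yields $T$ with $\mathrm{inv}(T)=\mathrm{inv}(D)$''; for that reading your proof is complete and correct.
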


Observation~\ref{extend} shows that we only need to consider tournaments rather than oriented graphs in lots of questions,
and it preserves the decycling family of the original graph.

After finishing the first version of this paper, we notice the independent work by Behague et al. \cite{2404.10663}. They are also working on the inversion number of dijoins and disprove a different conjecture of Aubian et al. \cite{2212.09188}. To our knowledge, their results do not imply any of our results in this paper.

The rest of this paper is organized as follows.
In Section~\ref{sec2} we give  proofs of Theorems~\ref{thm1.3} to
\ref{abnormal}.
In Section~\ref{sec3} we prove Theorem~\ref{thm1.5} and study the inversion number of the blow-up graphs.

\section{Proofs of Theorems~\ref{thm1.3} to
\ref{abnormal}}\label{sec2}

  In this section we first prove Theorem~\ref{thm1.3}. 
  \subsection{Proof of Theorem~\ref{thm1.3}}
  The proof of Theorem~\ref{thm1.3} relies on the ranks of matrices. We prove that the rank of characteristic vectors of any decycling family of an oriented graph $D$ is no less than  \(\invd\) if \(\invd\) is even. We first have the following lemma.
\begin{lemma}\label{lemma2.1}
    For any odd integer \(n\) and every symmetric matrix \(M \in M_n(\mathbb{F}_2)\), there is \(U \in M_n(\mathbb{F}_2)\) such that
    \(U^tU=M\).
\end{lemma}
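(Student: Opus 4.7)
The plan is to reduce the problem to factoring a normal form under congruence, and then exhibit the factorization explicitly. Over $\mathbb{F}_2$, congruence $M\mapsto P^tMP$ preserves both rank and the property of being alternating (all diagonal entries zero), and the classification of symmetric bilinear forms over $\mathbb{F}_2$ produces two families of normal forms. Setting $H=\begin{pmatrix}0&1\\1&0\end{pmatrix}$ and $J_b=H\oplus\cdots\oplus H$ ($b$ copies), any non-alternating symmetric matrix of rank $r$ is congruent to $I_r\oplus 0_{n-r}$, while any alternating symmetric matrix of rank $r$ (necessarily even) is congruent to $J_{r/2}\oplus 0_{n-r}$. Thus the problem reduces to factoring these two normal forms.

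For the non-alternating form $D=I_r\oplus 0_{n-r}$ the factorization is immediate and uses no parity hypothesis: $D$ is a diagonal $\{0,1\}$-matrix, so $D^tD=D^2=D$ over $\mathbb{F}_2$, and writing $M=P^tDP$ and setting $U=DP$ gives $U^tU=M$.

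The odd-$n$ hypothesis enters in the alternating case. The idea is to realize the alternating normal form as the Gram matrix of vectors lying in the even-weight subspace $E\subseteq\mathbb{F}_2^n$ under the standard inner product. One has $\dim E=n-1$, the restricted form is alternating (an even-weight vector is self-orthogonal), and its radical is $E\cap E^{\perp}=E\cap\mathbb{F}_2\mathbf{1}$. For odd $n$ the all-ones vector $\mathbf{1}$ has odd weight, so $\mathbf{1}\notin E$ and the restricted form on $E$ is non-degenerate; hence $E$ is a symplectic space of even dimension $n-1$ and admits a symplectic basis $v_1,w_1,\ldots,v_m,w_m$ with $m=(n-1)/2$. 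Given alternating $M$ of rank $r\le n-1$ in standard form $J_{r/2}\oplus 0_{n-r}$, let $V$ be the $n\times n$ matrix whose columns are $v_1,w_1,\ldots,v_{r/2},w_{r/2},0,\ldots,0$; the defining relations of a symplectic basis immediately give $V^tV=J_{r/2}\oplus 0_{n-r}$, and then $U=VP$ works whenever $M=P^t(J_{r/2}\oplus 0_{n-r})P$.

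I expect the structural observation about the even-weight subspace to be the main obstacle. Everything else is routine congruence bookkeeping, but the fact that the standard inner product restricts to a \emph{non-degenerate} alternating form on $E$ precisely when $n$ is odd is what makes the parity hypothesis work; without it one could not accommodate alternating $M$ whose rank is close to $n$, since a naive block-by-block construction pairing each hyperbolic block with a zero coordinate would fail once $r/2$ exceeds $n-r$.
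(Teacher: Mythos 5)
Your proof is correct, but it takes a genuinely different route from the paper's. The paper argues by induction on $n$ with a case analysis on whether $M$ has a nonzero diagonal entry and whether a suitable $2\times 2$ principal block is non-singular, performing an explicit block reduction $M\mapsto \left(\begin{smallmatrix}A&0\\0&M_0-B^tA^{-1}B\end{smallmatrix}\right)$ and patching in the all-ones vector by hand in the degenerate subcases; the parity of $n$ enters only implicitly through the identity $\mathbf{1}\cdot\mathbf{1}=1$. You instead invoke the classification of symmetric bilinear forms over $\mathbb{F}_2$ under congruence (non-alternating of rank $r$ congruent to $I_r\oplus 0$, alternating congruent to $J_{r/2}\oplus 0$) and then factor each normal form directly: the diagonal case is trivial since $D^2=D$, and the alternating case is realized as a Gram matrix by taking a symplectic basis of the even-weight hyperplane $E=\mathbf{1}^{\perp}$, which carries a non-degenerate alternating form precisely because $\mathbf{1}\notin E$ when $n$ is odd. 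Your argument is conceptually cleaner and pinpoints exactly where the parity hypothesis is used (it also explains structurally why the even case fails for non-singular alternating $M$, matching Lemma 4 of the paper), but it leans on the congruence classification as a black box, whereas the paper's induction is elementary and self-contained; note also that all details you use are sound, including the bound $r\le n-1$ for alternating $M$ when $n$ is odd, which is what lets $r/2\le(n-1)/2$ hyperbolic pairs fit inside $E$.
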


\begin{proof}
    By induction on \(n\). It is trivial  when \(n=1\). Let \(M=(m_{ij})_{1 \le i,j \le n}\) be a symmetric matrix with \(n \ge 3\). We complete the proof by considering two cases.

    \textbf{Case \(1\).} There exists \(i\) such that \(m_{ii}=1\). By interchanging columns and corresponding rows
    we assume \(i=1\).

    \textbf{Case \(1.1\).} There exists \(j \neq 1\)  such that the submatrix
    \(A=\begin{pmatrix}
        m_{11} & m_{1j} \\
        m_{j1} & m_{jj}
    \end{pmatrix}\) is  non-singular. Also assume \(j=2\).

    In this subcase,
    \(M=\begin{pmatrix}
        A & B \\
        B^t & M_0
    \end{pmatrix}\). Note that
    \[
        \begin{pmatrix}
        I_2 & 0 \\
        -B^tA^{-1} & I_{n-2}
    \end{pmatrix}
    \begin{pmatrix}
        A & B \\
        B^t & M_0
    \end{pmatrix}
    \begin{pmatrix}
        I_2 & -A^{-1}B \\
        0 & I_{n-2}
    \end{pmatrix}=
    \begin{pmatrix}
        A & 0 \\
        0 & M_0-B^tA^{-1}B
    \end{pmatrix}.
    \]
    Since \(m_{11}=1\) and \(A\) is non-singular,  \(A=\begin{pmatrix}
        1 & 1 \\
        1 & 0
    \end{pmatrix}\) or \(\begin{pmatrix}
        1 & 0 \\
        0 & 1
    \end{pmatrix}\). Obviously, there is $U_1\in M_2(\mathbb{F}_2)$ such that \(U_1^tU_1=A\). By induction, there is $U_2\in M_{n-2}(\mathbb{F}_2)$ such that \(U_2^tU_2=M_0-B^tA^{-1}B\).
    Thus the result holds.

    \textbf{Case \(1.2\).} For every \(j>1\),  \(A_j=\begin{pmatrix}
        m_{11} & m_{1j} \\
        m_{j1} & m_{jj}
    \end{pmatrix}\) is singular.

    In this subcase,  \(A_j=\begin{pmatrix}
        1 & 1 \\
        1 & 1
    \end{pmatrix}\) or \(\begin{pmatrix}
        1 & 0 \\
        0 & 0
    \end{pmatrix}\) since \(M\) is symmetric.
    By elementary column transformations and corresponding row transformations we can change all
    \(\begin{pmatrix}
        1 & 1 \\
        1 & 1
    \end{pmatrix}\) to
    \(\begin{pmatrix}
        1 & 0 \\
        0 & 0
    \end{pmatrix}\). Thus we can assume
    \[
         M=\begin{pmatrix}
                1 & 0 & 0 & \cdots & 0\\
                0 & 0 & m_{23} & \dots & m_{2n} \\
                0 & m_{32} & 0 & \dots & m_{3n} \\
                \vdots & \vdots & \vdots & \ddots & \vdots \\
                0 & m_{n2} & m_{n3} & \cdots & 0\\
            \end{pmatrix}.
    \]
    Let
    \[
         C=\begin{pmatrix}
                1 & 0 & 0 & \cdots & 0\\
                0 & 1 & m_{23} & \dots & m_{2n} \\
                0 & m_{32} & 0 & \dots & m_{3n} \\
                \vdots & \vdots & \vdots & \ddots & \vdots \\
                0 & m_{n2} & m_{n3} & \cdots & 0\\
            \end{pmatrix}.
    \]
    There is \(V \in M_n(\mathbb{F}_2)\) such that \(V^tV =C\)
    by \textbf{Case} \(1.1\).
   Assume \(V=(\alpha_1,\alpha_2,\ldots,\alpha_n)\).
    Let \(U=(\mathbf{1},\alpha_1+\alpha_2,\alpha_3,\ldots,\alpha_n)\),
    where \(\mathbf{1}\) is
         the all-ones vector. Then $U\in M_n(\mathbb{F}_2)$. Note that $\mathbf{1} \cdot \mathbf{1}=1$, $\mathbf{1} \cdot \alpha_i = \alpha_i \cdot \alpha_i
    = 0 $ for $i \ge 3$, $\mathbf{1} \cdot \alpha_i = \alpha_i \cdot \alpha_i
    = 1 $ for $i \le 2$ and $\alpha_1 \cdot \alpha_i = 0$ for $ i \ge 2.$
   Hence \(U^tU =M\).

    \textbf{Case \(2\).} \(m_{ii}=0\) holds for every \(i \in [n]\).

    Let
    \[
          C_0=M+\begin{pmatrix}
                1 & 0 & 0 & \cdots & 0\\
                0 & 0 & 0 & \dots & 0 \\
                0 & 0 & 0 & \dots & 0 \\
                \vdots & \vdots & \vdots & \ddots & \vdots \\
                0 & 0 & 0 & \cdots & 0\\
            \end{pmatrix}.
    \]
    There is \(U_0 \in M_n(\mathbb{F}_2)\) such that \(U_0^tU_0= C_0\) by \textbf{Case} \(1\).
    Assume \(U_0=(\alpha_1',\alpha_2',\ldots,\alpha_n')\).
    Let \(U=(\alpha_1'+\mathbf{1},\alpha_2',\alpha_3',\ldots,\alpha_n')\). Note that $\mathbf{1} \cdot \mathbf{1}=1$  and $\mathbf{1}  \cdot \alpha_i' = \alpha_i'  \cdot \alpha_i' =0$ for all $1\le i\le n$. Hence \(M = U^t U\) and we are done.
    \end{proof}

\begin{lemma}\label{thm2.2}
    For every even integer \(k\) and a tournament \(T\) with \(\invt=k\), \(\mathrm{rank}(\Lambda) \ge k\), where \(\Lambda=\{ \mathbf{u} | u \in V(T)\}\) is the set of characteristic vectors of a decycling family of $T$.
    Moreover, \(\mathrm{rank}(\Lambda) = k\) if the decycling family of $T$ is a \(k\)-decycling family.
\end{lemma}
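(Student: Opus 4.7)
The plan is to argue by contradiction. Let $(X_i)_{1 \le i \le m}$ be the given decycling family of $T$ with associated characteristic vectors $\Lambda = \{\mathbf{u} : u \in V(T)\} \subseteq \mathbb{F}_2^m$, and suppose $\mathrm{rank}(\Lambda) = r < k$. I will use Lemma \ref{lemma2.1} to produce a new decycling family of $T$ of size strictly less than $k$, contradicting $\mathrm{inv}(T) = k$.

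First, fix a basis $e_1,\ldots,e_r$ of the span $W \subseteq \mathbb{F}_2^m$ of $\Lambda$, and expand each $\mathbf{u} = \sum_{j=1}^{r} c_{u,j}\, e_j$; collect the coefficients into $\tilde{\mathbf{u}} = (c_{u,1},\ldots,c_{u,r})^t \in \mathbb{F}_2^r$. Let $A = (e_j \cdot e_l)_{1 \le j,l \le r} \in M_r(\mathbb{F}_2)$ be the Gram matrix of this basis; it is symmetric and satisfies $\mathbf{u}\cdot\mathbf{w} = \tilde{\mathbf{u}}^t A \tilde{\mathbf{w}}$ for every $u,w \in V(T)$. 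If $r$ is odd, Lemma \ref{lemma2.1} directly yields $U \in M_r(\mathbb{F}_2)$ with $U^t U = A$. If $r$ is even, then since $k$ is even and $r<k$ one has $r \le k-2$; extend each vector to $\tilde{\mathbf{u}}' = (\tilde{\mathbf{u}}^t,\, 0)^t \in \mathbb{F}_2^{r+1}$ and apply Lemma \ref{lemma2.1} to the odd-dimensional block matrix $A' = \begin{pmatrix} A & 0 \\ 0 & 1 \end{pmatrix}$ to obtain $U' \in M_{r+1}(\mathbb{F}_2)$ with $(U')^t U' = A'$. In either case, set $\mathbf{v}_u := U\tilde{\mathbf{u}}$ (respectively $U'\tilde{\mathbf{u}}'$); the new vectors live in $\mathbb{F}_2^{r}$ or $\mathbb{F}_2^{r+1}$ and by construction satisfy $\mathbf{v}_u \cdot \mathbf{v}_w = \mathbf{u}\cdot\mathbf{w}$ for all $u,w \in V(T)$.

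Reading off the coordinates of the $\mathbf{v}_u$'s as membership indicators defines a new family $(Y_j)$ of subsets of $V(T)$, of size $r$ or $r+1$. Because an arc $uw$ of $T$ remains an arc after inverting by a family precisely when the scalar product of the corresponding characteristic vectors is $0$, and since every pairwise inner product is preserved, inverting $T$ by $(Y_j)$ produces the same acyclic tournament as inverting by $(X_i)$. Hence $(Y_j)$ is a decycling family, and its size is at most $k-1 < k$ — directly if $r$ is odd (so $r \le k-1$) and from $r \le k-2$ if $r$ is even — contradicting $\mathrm{inv}(T) = k$. For the moreover clause, a $k$-decycling family places the characteristic vectors in $\mathbb{F}_2^k$, so $\mathrm{rank}(\Lambda) \le k$; together with the lower bound this forces equality.

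The main obstacle is ensuring that the algebraic compression truly yields a decycling family of $T$: one needs every pairwise scalar product of the characteristic vectors to be preserved, which is precisely why Lemma \ref{lemma2.1} is stated with the specific factorization $M = U^t U$ (so that $\tilde{\mathbf{u}}^t A \tilde{\mathbf{w}}$ becomes a genuine $\mathbb{F}_2$-inner product of new vectors). The evenness of $k$ is used in exactly one place, the padding step when $r$ is even: it forces $r \le k-2$, so the enlarged family still has fewer than $k$ sets. If $k$ were odd, an even $r = k-1 < k$ would pad to $r+1 = k$ and no contradiction would ensue, which is entirely consistent with the failure of the conclusion for odd $k$ exhibited by Theorem \ref{thm1.2}.
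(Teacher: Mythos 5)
Your proof is correct and follows essentially the same route as the paper: both arguments factor the Gram matrix via Lemma \ref{lemma2.1} to build a shorter decycling family with all pairwise scalar products preserved, contradicting $\invt=k$. The only difference is organizational — the paper picks $k-1$ spanning vectors from $\Lambda$ directly (so the Gram matrix is odd-dimensional at once), whereas you work with a basis of size $r=\mathrm{rank}(\Lambda)$ and pad to odd dimension when $r$ is even; both correctly isolate where the evenness of $k$ is used.
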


\begin{proof}
    Suppose for a contradiction that \(\mathrm{rank}(\Lambda) < k\).
    Then there are \(k-1\) vectors \(\{\alpha_1, \ldots , \alpha_{k-1}\}\) \( \subset \Lambda\) such that
    \(\mathbf{u}\) is a linear combination of them for each \(u \in V(T)\). Let
    \(
        \mathbf{u}=\sum_{i=1}^{k-1} \lambda_{u,i} \alpha_i
    \)
   and \(M \in M_{k-1}(\mathbb{F}_2)\)  the Gram matrix of \(\{\alpha_1, \alpha_2, \ldots , \alpha_{k-1}\}\).
    Then there exists \(U \in M_{k-1}(\mathbb{F}_2)\) such that \(M=U^tU\) by Lemma~\ref{lemma2.1}.
    Assume \(U=(\beta_1,\beta_2,\ldots,\beta_{k-1})\). Then \(\beta_i \in \mathbb{F}_2^{k-1}\) and \(\beta_i \cdot \beta_j=\alpha_i \cdot \alpha_j\) for any $i,j\in\{1,\ldots,k-1\}$. We have a
    new \((k-1)\)-decycling family given by the characteristic vector
    \[
         \mathbf{u}=\sum_{i=1}^{k-1} \lambda_{u,i} \beta_i
    \]
    for each \(u \in V(T)\), because the scalar products between vertices are fixed. A contradiction with \(\invt=k\).
\end{proof}

\begin{remark}\label{remark} The results  of  Lemma \ref{lemma2.1} only hold when $k$ is even.
    If $k$ is odd, then \(k-2\) is odd. By  Lemma~\ref{lemma2.1}, there is \(U \in M_{k-2}(\mathbb{F}_2)\) such that
    \(U^tU=M\). Hence
    we can  prove \(\mathrm{rank}(\Lambda) \ge k-1\) by using the same method.
\end{remark}

To prove Theorem~\ref{thm1.3}, we give a stronger conclusion as following.

\begin{lemma}\label{Essential claim}
    Let \(D\) be an oriented graph. If \(\mathrm{inv}(\overrightarrow{C_3} \Rightarrow D) =
    \invd = k\), then there exists a \(k\)-decycling family of \(D\) with the set of
         characteristic vectors \(\Lambda=\{\mathbf{z}| z \in V(D)\}\) such that
         \(\mathrm{rank}(\Lambda)=k-1\).
\end{lemma}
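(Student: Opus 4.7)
The plan is to take a $k$-decycling family $(Y_i)_{i=1}^k$ of $\overrightarrow{C_3}\Rightarrow D$, restrict it to $V(D)$, and then modify the restriction so that all characteristic vectors land in a fixed hyperplane of $\mathbb{F}_2^k$. Write $\mathbf{a},\mathbf{b},\mathbf{c}$ for the characteristic vectors of the three vertices of $\overrightarrow{C_3}$ and $\mathbf{v}$ for $v\in V(D)$. By the cyclic symmetry of $\overrightarrow{C_3}$ I may assume the acyclic ordering produced on the $\overrightarrow{C_3}$-part reads $a<b<c$, which forces $\mathbf{a}\cdot\mathbf{b}=\mathbf{b}\cdot\mathbf{c}=0$ and $\mathbf{c}\cdot\mathbf{a}=1$; the restriction $(Y_i\cap V(D))_{i=1}^k$ is automatically a $k$-decycling family of $D$ since $\mathrm{inv}(D)=k$ is minimal.

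Next I would partition $V(D)$ into four blocks $V_0,V_1,V_2,V_3$ according to which of the four allowed triples $(0,0,0),(0,0,1),(0,1,1),(1,1,1)$ is taken by $(\mathbf{a}\cdot\mathbf{v},\mathbf{b}\cdot\mathbf{v},\mathbf{c}\cdot\mathbf{v})$, corresponding to whether $v$ lies above $c$, between $b$ and $c$, between $a$ and $b$, or below $a$ in the final ordering. This places $V(D)$ in the final ordering as $V_3<V_2<V_1<V_0$ with some internal order inside each $V_i$.

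I would then propose the modification defined by $\tilde{\mathbf{v}}:=\mathbf{v}+\mathbf{c}$ for $v\in V_3$ and $\tilde{\mathbf{v}}:=\mathbf{v}$ for $v\notin V_3$, which in set language means replacing $Y_i\cap V(D)$ by its symmetric difference with $V_3$ precisely when $c\in Y_i$. The key computation is that $\mathbf{a}\cdot\tilde{\mathbf{v}}=0$ for every $v\in V(D)$: for $v\notin V_3$ we have $\mathbf{a}\cdot\mathbf{v}=0$ already, and for $v\in V_3$ one gets $\mathbf{a}\cdot\mathbf{v}+\mathbf{a}\cdot\mathbf{c}=1+1=0$. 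Hence every $\tilde{\mathbf{v}}$ lies in $\mathbf{a}^\perp$, and $\mathrm{rank}\{\tilde{\mathbf{v}}\}\le k-1$ (note $\mathbf{a}\ne 0$ because $\mathbf{a}\cdot\mathbf{c}=1$).

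The substantive task will be to verify that $(\tilde{\mathbf{v}})$ is still a $k$-decycling family of $D$, i.e., that the induced scalar products realize an acyclic orientation. A short expansion shows $\tilde{\mathbf{u}}\cdot\tilde{\mathbf{v}}$ differs from $\mathbf{u}\cdot\mathbf{v}$ by $(\mathbf{u}\cdot\mathbf{c})[v\in V_3]+(\mathbf{v}\cdot\mathbf{c})[u\in V_3]+(\mathbf{c}\cdot\mathbf{c})[u,v\in V_3]$, and a case analysis by types shows the only affected arcs are those between $V_3$ and $V_1\cup V_2$ (all flipped) together with those inside $V_3$ (flipped iff $\mathbf{c}\cdot\mathbf{c}=1$); one checks directly that the order $V_2<V_1<V_3<V_0$, with the internal order on $V_3$ possibly reversed, is a valid topological order for the new orientation. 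Combining this with the standard lower bound $\mathrm{rank}\ge k-1$ obtained from $\mathrm{inv}(D)=k$ via Lemma~\ref{lemma2.1} and Remark~\ref{remark} gives rank exactly $k-1$. I expect the block-by-block verification of acyclicity to be the only delicate point in the argument.
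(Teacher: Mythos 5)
Your strategy is genuinely different from the paper's and, in the case you treat, correct: the paper first proves that the three blocks $V_1,V_2,V_3$ are all non-empty, picks representatives $\mathbf{p_0},\mathbf{q_0},\mathbf{r_0}$, decomposes $\mathbb{F}_2^k$ against the orthogonal complement of $\{\mathbf{a},\mathbf{b},\mathbf{c}\}$, and translates whichever block carries a non-zero coefficient of $\mathbf{u}+\mathbf{v}$; you instead translate one fixed block by one fixed vector so that everything visibly lands in the hyperplane $\mathbf{a}^\perp$, which avoids the non-emptiness argument and the tournament extension entirely. Your acyclicity check (all $V_3$--$(V_1\cup V_2)$ arcs flip, $V_3$ internally flips or not as a whole, giving the order $V_2<V_1<V_3<V_0$) and the rank bound are fine as far as they go.

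The genuine gap is the opening reduction ``by cyclic symmetry I may assume $\mathbf{a}\cdot\mathbf{b}=\mathbf{b}\cdot\mathbf{c}=0$ and $\mathbf{c}\cdot\mathbf{a}=1$.'' Writing the cycle as $a\to b\to c\to a$, the six possible acyclic orders on $\{a,b,c\}$ split into two orbits under cyclic relabelling: in one orbit exactly one arc of the cycle is reversed and the scalar products are $(\mathbf{a}\cdot\mathbf{b},\mathbf{b}\cdot\mathbf{c},\mathbf{a}\cdot\mathbf{c})=(0,0,1)$ as you assume, but in the other orbit two arcs are reversed and the products are $(1,1,0)$; the cyclic group cannot move one orbit onto the other, and the paper explicitly carries both cases at this point. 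In the second pattern your construction fails at the key step: for $v\in V_3$ one gets $\mathbf{a}\cdot\tilde{\mathbf{v}}=\mathbf{a}\cdot\mathbf{v}+\mathbf{a}\cdot\mathbf{c}=1+0=1$, so the modified vectors do not lie in $\mathbf{a}^\perp$ and the rank bound collapses. The gap is repairable within your framework, but not by a uniform formula: in the second pattern you should add $\mathbf{b}$ (not $\mathbf{c}$) to the $V_3$-vectors, which gives $\mathbf{a}\cdot\tilde{\mathbf{v}}=1+\mathbf{a}\cdot\mathbf{b}=0$ and the valid block order $V_2<V_3<V_1<V_0$, whereas the candidate $\mathbf{b}+\mathbf{c}$, which satisfies $\mathbf{a}\cdot(\mathbf{b}+\mathbf{c})=1$ in both patterns, flips exactly the $V_3$--$V_1$ arcs and creates the block cycle $V_3<V_2<V_1<V_3$. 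So the second case needs its own choice of translating vector and its own acyclicity verification before the proof is complete.
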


\begin{proof}
 Note that \(
    \invd = k\). By Observation~\ref{extend}, Lemma~\ref{thm2.2}
    and Remark~\ref{remark}, we can
 assume \(\mathrm{rank}(\Lambda)=k\) for every
    \(k\)-decycling family of $D$.

     Assume \(V(\overrightarrow{C_3})=\{u,v,w\}\). Fix a \(k\)-decycling family \((X_i)_{1 \le i \le k}\) of \(\overrightarrow{C_3} \Rightarrow D\).
    We extend \(D\) to a tournament \(T\) such that \(\overrightarrow{C_3} \Rightarrow T\) has the same \(k\)-decycling family and \(\invt = \invd =k\) by Observation~\ref{extend}.
     After the inversions, assume the new ordering of \(V(\overrightarrow{C_3} \Rightarrow T)\) is
    \[
        P < u < Q < v < R < w < S,
    \]
    where \(\{P,Q,R,S\}\) is a partition of \(V(T)\) with possibly empty set. Comparing with the old ordering of \(V(\overrightarrow{C_3} \Rightarrow T)\), that is
    \[
        \{u,v,w\} \rightarrow P \cup Q \cup R \cup S,
    \]
    we can give the scalar products between \(\mathbf{u},\mathbf{v},\mathbf{w} \in \fk\) and \(\mathbf{p},\mathbf{q},\mathbf{r},\mathbf{s} \in \fk\) as following, where \(p \in P, q \in Q, r\in R, s \in S\).
    \begin{equation*}\tag{1}	
        \begin{aligned}
	      &\mathbf{p} \cdot \mathbf{u}=1, \quad \mathbf{q} \cdot \mathbf{u}=0, \quad \mathbf{r} \cdot \mathbf{u}=0, \quad \mathbf{s} \cdot \mathbf{u}=0, \\
       &\mathbf{p} \cdot \mathbf{v}=1, \quad \mathbf{q} \cdot \mathbf{v}=1, \quad \mathbf{r} \cdot \mathbf{v}=0, \quad \mathbf{s} \cdot \mathbf{v}=0, \\
       &\mathbf{p} \cdot \mathbf{w}=1, \quad \mathbf{q} \cdot \mathbf{w}=1, \quad \mathbf{r} \cdot \mathbf{w}=1, \quad \mathbf{s} \cdot \mathbf{w}=0.
        \end{aligned}
    \end{equation*}
   Before the inversions, three arcs of \(\overrightarrow{C_3}\) are either
    \(u \rightarrow v\), \(v \rightarrow w \), \(w\rightarrow u\) or \(u \rightarrow w\), \(w \rightarrow v \), \(v\rightarrow u\). Then we have either $\mathbf{u} \cdot \mathbf{v}=0$, $\mathbf{v} \cdot \mathbf{w}=0$ and $\mathbf{u} \cdot \mathbf{w}=1$, or $\mathbf{u} \cdot \mathbf{v}=1$, $\mathbf{v} \cdot \mathbf{w}=1$ and $\mathbf{u} \cdot \mathbf{w}=0$.
          In both cases, we have \(\mathbf{u} \cdot (\mathbf{v}+\mathbf{w})=1\) and \(\mathbf{w} \cdot (\mathbf{u}+\mathbf{v})=1\). Thus \(\mathbf{u}, \mathbf{v}+\mathbf{w}, \mathbf{u}+\mathbf{v}\)
     are nonzero vectors.

      Since \(    \invt = k\), \(\Lambda= \{\mathbf{p}| p \in P\} \cup \{\mathbf{q}| q \in Q\} \cup\{\mathbf{r}| r \in R\} \cup\{\mathbf{s}| s \in S\}\) is the set of characteristic vectors of the \(k\)-decycling family \((X_i\cap V(T))_{1 \le i \le k}\), of \(T\). By assumption, \(\mathrm{rank}(\Lambda)=k\).
     If \(P= \emptyset\) (resp. \(Q= \emptyset\) or \(R= \emptyset\)), then \(\mathbf{u} \bot \Lambda\) (resp. \((\mathbf{u}+\mathbf{v}) \bot \Lambda\) or
      \((\mathbf{v}+\mathbf{w}) \bot \Lambda\)), a contradiction with  \(\mathrm{rank}(\Lambda)=k\). Hence we have
     $P $, $Q $ and $R $ all are non-empty sets. Note that the digraph Inv\((T;(X_i\cap V(T))_{i \in I})\) is acyclic and the ordering of $V(T)$ in Inv\((T;(X_i\cap V(T))_{i \in I})\) is $P  < Q < R  < S.$ Particularly, Inv\((T\left\langle Y \right\rangle;(X_i\cap Y)_{i \in I})\) is acyclic for any $Y\in\{P,Q,R,S\}$.

     Now we  choose \(p_0 \in P, q_0 \in Q, r_0 \in R\). From \((1)\), it is not difficult to show that  \(\{\mathbf{p_0}, \mathbf{q_0}, \mathbf{r_0}\}\)  and \(\{\mathbf{u}, \mathbf{v}, \mathbf{w}\}\) are linear independent.
      Let \(\Phi=\{ \mathbf{x} \in \fk | \mathbf{x} \bot \mathbf{u}, \mathbf{x} \bot \mathbf{v}, \mathbf{x} \bot \mathbf{w}\}\). Then \(\mathrm{rank}(\Phi)=k-3\). For any \(p \in P\), we have \(\mathbf{p}+\mathbf{p_0} \in \Phi\) which implies
       \(\{\mathbf{p}|p \in P\} \subset \mathbf{p_0}+\Phi\). So for any \(p\in P\), we can set
      \(
             \mathbf{p}=\mathbf{p_0}+\phi_p,
      \)
      where \(\phi_p \in \Phi\). Similarly, for any \(q \in Q, r \in R\), we set
      \(
             \mathbf{q}=\mathbf{q_0}+\phi_q, ~ \mathbf{r}=\mathbf{r_0}+\phi_r,
      \)
      where \(\phi_q,\phi_r \in \Phi\). 
           Note that any vector is a linear combination of
      \(\{\mathbf{p_0}, \mathbf{q_0}, \mathbf{r_0}\}\) and a vector in \(\Phi\). Assume
      \[
           \mathbf{u}+\mathbf{v}=\lambda_1\mathbf{p_0}+\lambda_2\mathbf{q_0}+\lambda_3\mathbf{r_0}+\phi.
      \]
      Since \(\mathbf{w} \cdot (\mathbf{u}+\mathbf{v})=1, \mathbf{u}+\mathbf{v} \notin \Phi\).
      Then  \(|\{\lambda_1,\lambda_2,\lambda_3\}\cap(\mathbb{F}_2\setminus\{0\})|\ge 1\). We
      consider the following three cases.

      \textbf{Case \(1\).}  \(\lambda_1=1\).

      In this case, \(\tau:= \mathbf{p_0}+\mathbf{u}+\mathbf{v}\) is a
      linear combination of \(\mathbf{q_0},\mathbf{r_0}\) and a vector in \(\Phi\). Let
      $$\Lambda'=\{\mathbf{p}+\mathbf{u}+\mathbf{v}| p \in P\} \cup \{\mathbf{q}| q \in Q\} \cup\{\mathbf{r}| r \in R\} \cup\{\mathbf{s}| s \in S\}.$$Then \(\Lambda'\) is the set of characteristic vectors of a $k$-family, say  \((X'_i)_{1 \le i \le k}\), of $T$. Obviously, \(\mathrm{rank}(\Lambda')=k-1\). So we just need to show that  \((X'_i)_{1 \le i \le k}\) is a \(k\)-decycling family of $T$, that is Inv\((T;(X'_i)_{i \in I})\) is acyclic. By comparing $\Lambda$ and $\Lambda'$, we have Inv\((T\left\langle Y \right\rangle;(X'_i\cap Y)_{i \in I})\) is acyclic for any $Y\in\{Q,R,S\}$ and $Q<R<S$ in Inv\((T;(X'_i)_{i \in I})\).

      Since \((\mathbf{u}+\mathbf{v}) \cdot \mathbf{r}=(\mathbf{u}+\mathbf{v}) \cdot \mathbf{s}=0\), we have $(\mathbf{p}+ \mathbf{u}+\mathbf{v})\cdot \mathbf{x}=\mathbf{p}\cdot \mathbf{x}$ for all $p\in P$ and $x\in R\cup S$ which implies $P<R<S$ in Inv\((T;(X'_i)_{i \in I})\). Since \((\mathbf{u}+\mathbf{v}) \cdot \mathbf{q}=1\), we have \((\mathbf{p}+ \mathbf{u}+\mathbf{v})\cdot \mathbf{q}=\mathbf{p}\cdot \mathbf{q}+1\) for  all $p\in P$ and $q\in Q$ which implies $Q<P$ in Inv\((T;(X'_i)_{i \in I})\). For any \(p_1, p_2 \in P\), we have
      \[
          (\mathbf{p_1}+ \mathbf{u}+\mathbf{v}) \cdot (\mathbf{p_2}+ \mathbf{u}+\mathbf{v})=(\mathbf{p_1} \cdot \mathbf{p_2})
          +(\mathbf{u}+\mathbf{v}) \cdot (\mathbf{u}+\mathbf{v}).
      \]
      Note that \(\mathbf{u}+\mathbf{v}\) is fixed, so the arcs in Inv\((T\left\langle P \right\rangle;(X_i\cap P)_{i \in I})\) are either all inverted or not which implies Inv\((T\left\langle P \right\rangle;(X'_i\cap P)_{i \in I})\) is acyclic.
       As a conclusion, Inv\((T\left\langle Y \right\rangle;(X'_i\cap Y)_{i \in I})\) is acyclic for all $Y\in\{P,Q,R,S\}$ and $Q < P < R < S$ in Inv\((T;(X'_i)_{i \in I})\).
       Thus \((X'_i)_{1 \le i \le k}\) is a \(k\)-decycling family of $T$.

      \textbf{Case \(2\).}  \(\lambda_2=1\).

      In this case,  \(\tau:= \mathbf{q_0}+\mathbf{u}+\mathbf{v}\) is a
      linear combination of \(\mathbf{p_0},\mathbf{r_0}\) and a vector in \(\Phi\). Let
      $$\Lambda'=\{\mathbf{p}| p \in P\} \cup \{\mathbf{q}+\mathbf{u}+\mathbf{v}| q \in Q\} \cup\{\mathbf{r}| r \in R\} \cup\{\mathbf{s}| s \in S\}.$$Then \(\Lambda'\) is the set of characteristic vectors of a $k$-family, say  \((X'_i)_{1 \le i \le k}\), of $T$. Obviously, \(\mathrm{rank}(\Lambda')=k-1\). Since \((\mathbf{u}+\mathbf{v}) \cdot \mathbf{p}=(\mathbf{u}+\mathbf{v}) \cdot \mathbf{r}=(\mathbf{u}+\mathbf{v}) \cdot \mathbf{s}=0\), by the same argument as that of Case 1, we have Inv\((T\left\langle Y \right\rangle;(X'_i\cap Y)_{i \in I})\) is acyclic for all $Y\in\{P,Q,R,S\}$ and $P < Q < R < S$ in Inv\((T;(X'_i)_{i \in I})\).
       Thus \((X'_i)_{1 \le i \le k}\) is a \(k\)-decycling family of $T$.

      \textbf{Case \(3\).}  \(\lambda_3=1\).

      In this case,   \(\tau:= \mathbf{r_0}+\mathbf{u}+\mathbf{v}\) is a
      linear combination of \(\mathbf{p_0},\mathbf{q_0}\) and a vector in \(\Phi\). Let
      $$\Lambda'=\{\mathbf{p}| p \in P\} \cup \{\mathbf{q}| q \in Q\} \cup\{\mathbf{r}+\mathbf{u}+\mathbf{v}| r \in R\} \cup\{\mathbf{s}| s \in S\}.$$Then \(\Lambda'\) is the set of characteristic vectors of a $k$-family, say  \((X'_i)_{1 \le i \le k}\), of $T$. Obviously, \(\mathrm{rank}(\Lambda')=k-1\). Note that \((\mathbf{u}+\mathbf{v}) \cdot \mathbf{p}=(\mathbf{u}+\mathbf{v}) \cdot \mathbf{s}=0\) and \((\mathbf{u}+\mathbf{v}) \cdot \mathbf{q}=1\).
      By the same argument as that of Case 1, we have Inv\((T\left\langle Y \right\rangle;(X'_i\cap Y)_{i \in I})\) is acyclic for all $Y\in\{P,Q,R,S\}$ and $P < R< Q < S$ in Inv\((T;(X'_i)_{i \in I})\).
       Thus \((X'_i)_{1 \le i \le k}\) is a \(k\)-decycling family of $T$.
  \end{proof}

  \vskip.2cm
  Now we are going to prove Theorem~\ref{thm1.3}.

 \noindent \textbf{\emph{Proof of Theorem~\ref{thm1.3}.}}
     By Observation~\ref{ob1}, \(\mathrm{inv}(\overrightarrow{C_3} \Rightarrow D) \ge k\). As we can
     invert \(\overrightarrow{C_3}\) and \(D\) respectively to transform $\overrightarrow{C_3} \Rightarrow D$ into an acyclic digraph, we have
     \(\mathrm{inv}(\overrightarrow{C_3} \Rightarrow D) \le k+1\).
     If \(\mathrm{inv}(\overrightarrow{C_3} \Rightarrow D) = k\), then there exists a \(k\)-decycling family of \(D\) with the set of
         characteristic vectors \(\Lambda=\{\mathbf{u}| u \in V(D)\}\) such that
         \(\mathrm{rank}(\Lambda)=k-1\) by Lemma~\ref{Essential claim}, which contradicts with
         Lemma~\ref{thm2.2}. Hence we have \(\mathrm{inv}(\overrightarrow{C_3} \Rightarrow D) = k+1\)
         as required.
\hfill $\square$\par

\subsection{Proofs of Theorems~\ref{direction} to
\ref{abnormal}}
We observe that in the proof of Theorem~\ref{thm1.3}, we only use the property that the rank of
the characteristic vectors and the inversion number are the same.
Hence we consider applying the conclusion to general cases.

\begin{lemma}\label{even matrix}
    For every even integer \(n\) and a symmetric matrix \(M \in M_n(\mathbb{F}_2)\), there is \(U \in M_n(\mathbb{F}_2)\) such that
    \(M =U^tU\)  if and only if
    \(M\) has a non-zero diagonal entry or \(M\) is a singular matrix.
\end{lemma}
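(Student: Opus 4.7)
The plan is to establish both directions of the equivalence separately, with the ``only if'' being short and the ``if'' requiring a case split parallel to Lemma~\ref{lemma2.1}. For the ``only if'' direction, suppose $M = U^tU$ and every diagonal entry of $M$ equals $0$. Then each column $u_i$ of $U$ satisfies $u_i\cdot u_i = 0$, so each column has even weight over $\mathbb{F}_2$. Consequently the all-ones row vector $\mathbf{1}_n^t$ annihilates every column, i.e.\ $\mathbf{1}_n^t U = 0$, which forces $U$ (and hence $M = U^tU$) to be singular. Contrapositively, a non-singular symmetric matrix with zero diagonal cannot be written as $U^tU$. This step works for any $n$; the role of $n$ being even enters only in the converse.

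For the ``if'' direction I split into two cases. In Case~A, where some $M_{ii}=1$, I may assume after a permutation that $M_{11}=1$. A symmetric congruence $P^tMP$ with $P$ upper-triangular unipotent clears the rest of row and column $1$, yielding $P^tMP = \mathrm{diag}(1, M_0)$ with $M_0 \in M_{n-1}(\mathbb{F}_2)$ symmetric. Since $n-1$ is odd, Lemma~\ref{lemma2.1} produces $V$ with $V^tV = M_0$, and $U := \mathrm{diag}(1, V)\,P^{-1}$ satisfies $U^tU = M$. In Case~B, where $M$ is alternating and singular, if $M=0$ take $U=0$; otherwise the standard classification of alternating forms over $\mathbb{F}_2$ (proved by inductively extracting a $J := \bigl(\begin{smallmatrix}0&1\\1&0\end{smallmatrix}\bigr)$ block via exactly the congruence calculation used in Case~1.1 of Lemma~\ref{lemma2.1}) furnishes an invertible $Q$ with $M = Q^t\Lambda Q$, where $\Lambda = J^{\oplus k} \oplus 0_{n-2k}$ and $2k = \mathrm{rank}(M)$. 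Because $M$ is singular and $n$ is even, $\mathrm{rank}(M) \le n-2$, so at least two trailing zeros are present.

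It therefore remains to realize $\Lambda$ as $U_0^tU_0$ in dimension $n$, after which $U := U_0 Q$ finishes Case~B. For this I work inside the even-weight hyperplane $H := \{x \in \mathbb{F}_2^n : \mathbf{1}_n^t x = 0\}$. Because $n$ is even, $\mathbf{1}_n \in H$ lies in the radical of the standard inner product restricted to $H$, and every $x \in H$ is isotropic, so $H/\langle \mathbf{1}_n\rangle$ carries a non-degenerate alternating form of dimension $n-2$, which over $\mathbb{F}_2$ is automatically hyperbolic. Picking representatives $w_1, w_1', \ldots, w_k, w_k' \in H$ of $k \le (n-2)/2$ hyperbolic pairs and padding with $n-2k$ zero columns produces $U_0 \in M_n(\mathbb{F}_2)$ with $U_0^tU_0 = \Lambda$. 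The main obstacle is precisely this last step: because $J$ alone cannot be written as $W^tW$, a naive induction that peels off one $J$ block at a time breaks down. The crucial observation that saves the argument is that singularity combined with the parity of $n$ forces at least two trailing zeros in the canonical form, providing exactly the slack needed for $\mathbf{1}_n$ to be absorbed into the radical while the resulting quotient remains a non-degenerate hyperbolic space of dimension $n-2$, large enough to accommodate all $k$ hyperbolic pairs simultaneously.
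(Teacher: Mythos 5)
Your proof is correct, and the ``only if'' direction together with Case~A coincide with the paper's argument (the even-weight/singularity observation and the reduction to Lemma~\ref{lemma2.1} in odd dimension $n-1$). Where you genuinely diverge is the singular alternating case. The paper handles it in one line: since $M$ is singular, a congruence sends a kernel vector to $e_1$, making the first row and column zero, so $M$ is congruent to $0 \oplus M'$ with $M'$ of odd size $n-1$, and Lemma~\ref{lemma2.1} is invoked again. You instead pass to the full symplectic canonical form $J^{\oplus k}\oplus 0_{n-2k}$ and realize it directly by choosing $k$ pairwise orthogonal hyperbolic pairs inside the even-weight hyperplane $H$, using that $\mathbf{1}\in H$ spans the radical of the standard form on $H$ when $n$ is even and that $2k=\mathrm{rank}(M)\le n-2$. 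Your route is heavier machinery for the same conclusion, but it buys a sharper structural picture: it isolates exactly why the obstruction is ``non-singular alternating'' (a single $J$ block is not of the form $W^tW$, as you note) and why singularity plus the parity of $n$ provides precisely the two dimensions of slack needed, whereas the paper's shortcut hides this inside the odd-dimensional Lemma~\ref{lemma2.1}. Both arguments are complete; the paper's is shorter because it recycles the odd case, yours is self-contained in Case~B and arguably more transparent about the geometry.
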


\begin{proof}
    We prove by induction on \(n\). It is trivial when \(n=2\). Assume \(M=(m_{ij})_{1 \le i,j \le n}\) with \(n \ge 4\). We consider three cases.

    \textbf{Case \(1\).} There exists \(i\in [n]\) such that \(m_{ii}=1\). By interchanging columns and corresponding rows
    we assume \(i=1\). Let
    \(M=\begin{pmatrix}
        1 & B \\
        B^t & M_0
    \end{pmatrix}\). Note that
    \[
        \begin{pmatrix}
        1 & 0 \\
        -B^tA^{-1} & I_{n-1}
    \end{pmatrix}
    \begin{pmatrix}
        1 & B \\
        B^t & M_0
    \end{pmatrix}
    \begin{pmatrix}
        1 & -A^{-1}B \\
        0 & I_{n-1}
    \end{pmatrix}=
    \begin{pmatrix}
        1 & 0 \\
        0 & M_0-B^tB
    \end{pmatrix}.
    \]
   Since \(M_0-B^tB \in M_{n-1}(\mathbb{F}_2) \), by Lemma~\ref{lemma2.1}, \(M_0-B^tB \) has a decomposition which implies the result holds.

   \textbf{Case \(2\).} \(m_{ii}=0\) for every \(i \in [n]\) and \(M\) is a non-singular symmetric matrix.
   Suppose there is  \(U \in M_n(\mathbb{F}_2)\) such that \(U^tU=M\). Assume \(U=(\alpha_1,\alpha_2,\ldots,\alpha_n)\). Since \(\mathbf{1} \cdot \alpha_j
    = \alpha_j \cdot \alpha_j = m_{jj}=0\), for any $j\in [n]$, the vector $\alpha_j$ has an even number of 1s. So \(\mathrm{rank}(U)<n\) which implies
     \(\mathrm{rank}(M) \le \mathrm{rank}(U) <n\),
   a contradiction.

    \textbf{Case \(3\).} \(m_{ii}=0\) for every \(i \in [n]\), and \(M\) is a singular symmetric matrix.
    Then we can change the first column and first row to all zeros by elementary column transformations
    and corresponding row transformations.
    Thus we can assume \(M=\begin{pmatrix}
        0 & 0 \\
        0 & M'
    \end{pmatrix}\). Since \(M' \in M_{n-1}(\mathbb{F}_2) \), by Lemma~\ref{lemma2.1}, \(M'\) has a decomposition which implies the result holds.
\end{proof}

\begin{lemma}\label{odd invertion}
    Let \(D\) be an oriented graph. If \(\mathrm{inv}(D)=k \ge 3\), then the following propositions are equivalent.
     \begin{enumerate}
         \item[\((1)\)] \(\mathrm{inv}(\overrightarrow{C_3} \Rightarrow D) = k\).
         \item[\((2)\)] \(k\) is odd and there exists a \(k\)-decycling family of \(D\) with the set of
         characteristic vectors \(\Lambda=\{\mathbf{z}| z \in V(D)\}\) such that
         \(\Lambda \subset \{x \in \fk | x \bot \mathbf{1}\}\).
     \end{enumerate}
\end{lemma}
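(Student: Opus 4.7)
The plan is to handle the two directions separately: $(2)\Rightarrow(1)$ by an explicit construction, and $(1)\Rightarrow(2)$ by combining Lemma~\ref{Essential claim} with Lemma~\ref{even matrix} through the $\mathbb{F}_2$ identity $\mathbf{z}\cdot\mathbf{z}=\mathbf{z}\cdot\mathbf{1}$.

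For $(2)\Rightarrow(1)$, Observation~\ref{ob1} gives $\mathrm{inv}(\overrightarrow{C_3}\Rightarrow D)\ge \invd=k$, so only the upper bound needs work. Given a $k$-decycling family $(X_i)_{1\le i\le k}$ of $D$ whose characteristic vectors are all orthogonal to $\mathbf{1}$, I would extend it to $\overrightarrow{C_3}\Rightarrow D$ by putting two vertices of $\overrightarrow{C_3}$, say $u$ and $v$, into \emph{every} $X_i$, while leaving the third vertex $w$ out of all of them; this makes $\mathbf{u}=\mathbf{v}=\mathbf{1}$ and $\mathbf{w}=\mathbf{0}$. Because $k$ is odd, $\mathbf{u}\cdot\mathbf{v}=1$ while $\mathbf{u}\cdot\mathbf{w}=\mathbf{v}\cdot\mathbf{w}=0$: exactly the arc between $u$ and $v$ is flipped, turning $\overrightarrow{C_3}$ into a transitive tournament. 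Since $\mathbf{z}\cdot\mathbf{1}=0$ for every $z\in V(D)$, none of the arcs from $V(\overrightarrow{C_3})$ into $V(D)$ is flipped, so the transitive tournament on $V(\overrightarrow{C_3})$ sits entirely before the acyclic image of $D$. This yields a $k$-decycling family of $\overrightarrow{C_3}\Rightarrow D$.

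For $(1)\Rightarrow(2)$, Theorem~\ref{thm1.3} immediately forces $k$ to be odd, since $k$ even would give $\mathrm{inv}(\overrightarrow{C_3}\Rightarrow D)=k+1$. Lemma~\ref{Essential claim} then produces a $k$-decycling family of $D$ whose set of characteristic vectors $\Lambda$ has rank exactly $k-1$. I would choose a basis $\{\alpha_1,\ldots,\alpha_{k-1}\}$ of $\mathrm{span}(\Lambda)$ and let $M$ be its Gram matrix. If $M$ admitted a decomposition $M=U^tU$ with $U\in M_{k-1}(\mathbb{F}_2)$, then substituting the columns of $U$ for the $\alpha_i$'s (as in the proof of Lemma~\ref{thm2.2}) would preserve all pairwise inner products and yield a $(k-1)$-decycling family of $D$, contradicting $\invd=k$. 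Hence $M$ has no such decomposition; since $k-1$ is even, Lemma~\ref{even matrix} forces $M$ to have all-zero diagonal (and to be non-singular).

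The last step is to promote the zero-diagonal property from the basis to all of $\Lambda$: for $\mathbf{z}=\sum_i\lambda_i\alpha_i$, the cross terms $\sum_{i<j}\lambda_i\lambda_j(\alpha_i\cdot\alpha_j)$ each appear twice and cancel over $\mathbb{F}_2$, leaving $\mathbf{z}\cdot\mathbf{z}=\sum_i\lambda_i(\alpha_i\cdot\alpha_i)=0$. Since $\mathbf{z}\cdot\mathbf{z}=\mathbf{z}\cdot\mathbf{1}$ in $\mathbb{F}_2^k$ (because $z_i^2=z_i$), this is exactly $\Lambda\subset\{x:x\bot\mathbf{1}\}$. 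I do not expect a real obstacle here: the statement is a clean repackaging of Lemmas~\ref{Essential claim} and~\ref{even matrix} via the identity $\mathbf{z}\cdot\mathbf{z}=\mathbf{z}\cdot\mathbf{1}$, and the only subtle point is confirming that Lemma~\ref{Essential claim} applies, which it does under the hypothesis $\mathrm{inv}(\overrightarrow{C_3}\Rightarrow D)=\invd$.
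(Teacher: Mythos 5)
Your proposal is correct and follows essentially the same route as the paper: $(1)\Rightarrow(2)$ via Theorem~\ref{thm1.3}, Lemma~\ref{Essential claim} (rank $k-1$), and Lemma~\ref{even matrix} applied to the Gram matrix of a basis to force a zero diagonal, then the identity $\mathbf{z}\cdot\mathbf{z}=\mathbf{z}\cdot\mathbf{1}$; and $(2)\Rightarrow(1)$ by assigning $\mathbf{1},\mathbf{1},\mathbf{0}$ to the triangle (the paper uses $\mathbf{0},\mathbf{1},\mathbf{1}$, which is the same construction up to relabelling). The only cosmetic difference is that the paper runs the Gram-matrix step as a contradiction on a single vertex $z_1$ with $\mathbf{z_1}\cdot\mathbf{z_1}=1$, whereas you apply the full equivalence of Lemma~\ref{even matrix} to a whole basis and then propagate the zero diagonal to all of $\Lambda$ by the $\mathbb{F}_2$ quadratic-form calculation.
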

\begin{proof}
    By Observation~\ref{extend}, we assume \(D\) is a tournament.

    \((1) \Rightarrow (2)\). Obviously \(k\) is odd by Theorem~\ref{thm1.3}.
    By Lemma~\ref{Essential claim}, there exists a \(k\)-decycling family of \(D\) with the set of
         characteristic vectors \(\Lambda=\{\mathbf{z}| z \in V(D)\}\) such that
         \(\mathrm{rank}(\Lambda)=k-1\).

 Under the fixed decycling family, we claim \(\mathbf{z} \cdot \mathbf{z}=0\)
    for each \(z \in V(D)\), then the conclusion can be deduced  since
    \(\mathbf{1} \cdot \mathbf{z}=\mathbf{z} \cdot \mathbf{z}=0\).
    Suppose there is \(z_1 \in V(D)\) such that \(\mathbf{z_1} \cdot \mathbf{z_1}=1\). Then we can choose
    \(z_2,z_3,\ldots,z_{k-1}\) such that \(\mathbf{z_1},\mathbf{z_2},\ldots,\mathbf{z_{k-1}}\) forms a base of \(\Lambda\). So  for each \(z \in V (D )\), we have
    \(
        \mathbf{z}=\sum_{i=1}^{k-1} \lambda_{z,i} \mathbf{z_i},
    \) where $\lambda_{z,i}\in \mathbb{F}_2$.

    Let \(M \in M_{k-1}(\mathbb{F}_2)\) be the Gram matrix of \(\{\mathbf{z_1}, \mathbf{z_2}, \ldots , \mathbf{z_{k-1}}\}\).
    Since \(\mathbf{z_1} \cdot \mathbf{z_1}=1\),  there exists \(U \in M_{k-1}(\mathbb{F}_2)\) such that \(M=U^tU\) by Lemma~\ref{even matrix}.
    Let \(U=
        (\mathbf{z_1^{\prime}} , \mathbf{z_2^{\prime}} , \ldots , \mathbf{z_{k-1}^{\prime}})
    \). Then \(\mathbf{z_i^{\prime}} \cdot \mathbf{z_j^{\prime}}=\mathbf{z_i} \cdot \mathbf{z_j}\). Now we have a
    new \((k-1)\)-decycling family given by the characteristic vector
    \[
         \mathbf{z}=\sum_{i=1}^{k-1} \lambda_{z,i} \mathbf{z_i^{\prime}}
    \]
    for each \(z \in V(D)\), because the scalar products between vertices are fixed. Note that \(\mathbf{z_i^{\prime}} \in \mathbb{F}_2^{k-1}\) and this contradicts with \(\mathrm{inv}(D)=k\).

    \((2) \Rightarrow (1)\). Obviously, \(\mathrm{inv}(\overrightarrow{C_3} \Rightarrow D) \ge \mathrm{inv}(D)= k\). Let \(V(\overrightarrow{C_3})=\{u,v,w\}\) and define \(\mathbf{u}=0, \mathbf{v}
    =\mathbf{w}=\mathbf{1} \in \fk\).
    It is easy to check that \(\{\mathbf{u},\mathbf{v},\mathbf{w}\} \cup \Lambda\) is the
    set of characteristic vectors of a \(k\)-decycling family of \(\overrightarrow{C_3} \Rightarrow D\) which implies \(\mathrm{inv}(\overrightarrow{C_3} \Rightarrow D) \le  k\).
\end{proof}

\vskip.2cm

\noindent \textbf{\emph{Proof of Theorem~\ref{direction}.}}
  Let \(D^-\) be the digraph obtained by reversing all arcs of \(D\). Then \(D\) and \(D^-\) share the same decycling family and
    \(\invd= \mathrm{inv}(D^-) \).

    Assume \(\mathrm{inv}(\overrightarrow{C_3} \Rightarrow D) = \invd=k\). By Lemma~\ref{odd invertion} and the fact that \(D\) and \(D^-\) share the same decycling family,
we have  \(k\) is odd and there exists a \(k\)-decycling family of \(D^-\) with the set of
  characteristic vectors
  \(\Lambda=\{\mathbf{z}| z \in V(D^-)\}\) such that
          \(\Lambda \subset \{x \in \fk | x \bot \mathbf{1}\}\). By Lemma~\ref{odd invertion} again, \(\mathrm{inv}(\overrightarrow{C_3} \Rightarrow D^-) = \mathrm{inv}(D^-)\). Since \(\mathrm{inv}(\overrightarrow{C_3} \Rightarrow D^-) =\mathrm{inv}(D \Rightarrow \overrightarrow{C_3})\) and \(\invd= \mathrm{inv}(D^-) \), we have $\mathrm{inv}(D \Rightarrow \overrightarrow{C_3})=\mathrm{inv}(\overrightarrow{C_3} \Rightarrow D)$.

          Conversely, assume \(\mathrm{inv}(D \Rightarrow \overrightarrow{C_3})=\mathrm{inv}(\overrightarrow{C_3} \Rightarrow D^-) =\mathrm{inv}(D^-)\). By the same argument as above, we have $\mathrm{inv}(\overrightarrow{C_3} \Rightarrow D)=\mathrm{inv}(D \Rightarrow \overrightarrow{C_3})$.

 Since \(\mathrm{inv}(\overrightarrow{C_3} \Rightarrow D), \mathrm{inv}(D \Rightarrow \overrightarrow{C_3})
  \in \{\invd,\invd+1\}\), we have \(\mathrm{inv}(\overrightarrow{C_3} \Rightarrow D)=
  \mathrm{inv}(D \Rightarrow \overrightarrow{C_3})\).
\hfill $\square$\par

\begin{lemma}\label{cor ext}
    Let \(D\) be an oriented graph with \(\mathrm{inv}(D)=1\). Then
    \(\mathrm{inv}(\overrightarrow{C_3} \Rightarrow H)=\mathrm{inv}(D \Rightarrow H)\) and \(\mathrm{inv}(H \Rightarrow \overrightarrow{C_3})=\mathrm{inv}(H \Rightarrow D)\) for
    every oriented graph \(H\).
\end{lemma}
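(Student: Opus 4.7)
The plan is as follows. Both $\mathrm{inv}(\overrightarrow{C_3}\Rightarrow H)$ and $\mathrm{inv}(D\Rightarrow H)$ lie in $\{k,k+1\}$ where $k=\mathrm{inv}(H)$, using Observation~\ref{ob1} for the lower bound $k$ and the trivial decycling of each part separately for the upper bound $k+1$. Hence it suffices to show that $\mathrm{inv}(\overrightarrow{C_3}\Rightarrow H)=k$ if and only if $\mathrm{inv}(D\Rightarrow H)=k$. The second stated equality $\mathrm{inv}(H\Rightarrow\overrightarrow{C_3})=\mathrm{inv}(H\Rightarrow D)$ will follow from the first by arc-reversal.

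First I handle the easy direction $\mathrm{inv}(\overrightarrow{C_3}\Rightarrow H)=k\Rightarrow\mathrm{inv}(D\Rightarrow H)\le k$. Lemma~\ref{odd invertion} forces $k\ge 3$ odd and supplies a $k$-decycling family of $H$ with $\Lambda_H\subseteq\mathbf{1}^\perp$. Taking any $1$-decycling $X$ of $D$ with indicator $x_d\in\{0,1\}$, I extend this family to $D\Rightarrow H$ by assigning each $d\in V(D)$ the characteristic vector $\mathbf{d}:=x_d\cdot\mathbf{1}\in\mathbb{F}_2^k$. A direct check shows $\mathbf{d}\cdot\mathbf{h}=x_d(\mathbf{1}\cdot\mathbf{h})=0$ (so arcs $d\to h$ are preserved) and $\mathbf{d_1}\cdot\mathbf{d_2}=x_{d_1}x_{d_2}(\mathbf{1}\cdot\mathbf{1})=x_{d_1}x_{d_2}$ since $k$ is odd (so arcs within $D$ are flipped exactly as by $X$). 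The resulting digraph is acyclic.

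The main obstacle is the reverse direction. Suppose $\mathrm{inv}(D\Rightarrow H)=k$, fix a $k$-decycling, and write $\Lambda_D\cup\Lambda_H$ for the characteristic vectors (so $\Lambda_H\subseteq\Lambda_D^\perp$). Since $\mathrm{inv}(D)=1$, $\Lambda_D$ contains a nonzero vector, so $\mathrm{rank}(\Lambda_H)<k$. By Lemma~\ref{thm2.2} even $k\ge 2$ is impossible; the case $k=1$ is impossible since $\mathrm{rank}(\Lambda_H)=0$ would force $\Lambda_H=\{\mathbf{0}\}$, contradicting $\mathrm{inv}(H)=1$; and $k=0$ is trivially excluded by $\mathrm{inv}(D\Rightarrow H)\ge 1$. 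So $k\ge 3$ is odd, and Remark~\ref{remark} pins down $\mathrm{rank}(\Lambda_H)=k-1$. Hence $\Lambda_H^\perp=\{\mathbf{0},\mathbf{a}\}$ for a unique nonzero $\mathbf{a}\in\mathbb{F}_2^k$, and $\Lambda_D\subseteq\{\mathbf{0},\mathbf{a}\}$. Because $D$ is not acyclic, at least one arc of $D$ must be inverted, which forces $\mathbf{a}\cdot\mathbf{a}=1$. Now replacing $D$ by $\overrightarrow{C_3}$, I assign two of its vertices the vector $\mathbf{a}$ and the third the vector $\mathbf{0}$: these lie in $\Lambda_H^\perp$ (preserving all arcs to $H$), and exactly one of the three internal products equals $\mathbf{a}\cdot\mathbf{a}=1$, so exactly one arc of $\overrightarrow{C_3}$ flips, breaking the $3$-cycle. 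This yields a $k$-decycling of $\overrightarrow{C_3}\Rightarrow H$. The delicate step is the identity $\mathbf{a}\cdot\mathbf{a}=1$; without it no internal product of $\overrightarrow{C_3}$ could be $1$ and the $3$-cycle would persist.

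For the second equality, let $D^-$ and $H^-$ denote the digraphs obtained by reversing every arc. Then $\mathrm{inv}(D^-)=\mathrm{inv}(D)=1$, $(H\Rightarrow D)^-=D^-\Rightarrow H^-$, and $\overrightarrow{C_3}^-\cong\overrightarrow{C_3}$. Applying the first equality to the pair $(D^-,H^-)$ gives
\[
\mathrm{inv}(H\Rightarrow D)=\mathrm{inv}(D^-\Rightarrow H^-)=\mathrm{inv}(\overrightarrow{C_3}\Rightarrow H^-)=\mathrm{inv}(H\Rightarrow\overrightarrow{C_3}).
\]
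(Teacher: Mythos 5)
Your framing (both quantities lie in $\{k,k+1\}$ where $k=\mathrm{inv}(H)$, so it suffices to prove the biconditional with the value $k$) matches the paper, and your forward direction is sound: extracting a $k$-decycling family of $H$ with $\Lambda_H\subseteq\mathbf{1}^{\perp}$ from Lemma~\ref{odd invertion} and assigning $x_d\cdot\mathbf{1}$ to each vertex of $D$ is essentially the paper's ``$(2)\Rightarrow(1)$'' construction transplanted from $\overrightarrow{C_3}$ to a general $D$ with $\mathrm{inv}(D)=1$. (One loose end there: Lemma~\ref{odd invertion} is stated only for $k\ge 3$, so it does not by itself ``force'' $k\ge 3$; you owe a separate sentence for $k\in\{1,2\}$ --- $k=2$ is excluded by Theorem~\ref{thm1.3}, but $k=1$ needs its own short argument.)

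The reverse direction, however, contains a genuine gap at the parenthetical ``so $\Lambda_H\subseteq\Lambda_D^{\perp}$''. A $k$-decycling family of $D\Rightarrow H$ only has to make the whole digraph acyclic; it is under no obligation to preserve the cross arcs from $D$ to $H$. The acyclic ordering it produces may interleave vertices of $D$ with vertices of $H$, and any cross arc $dh$ whose head ends up before its tail satisfies $\mathbf{d}\cdot\mathbf{h}=1$. This is not a removable technicality: it is precisely the difficulty that the paper's Lemma~\ref{Essential claim} is built to handle --- there the acyclic order is written as $P<u<Q<v<R<w<S$, and the argument shows that in the critical case $P$, $Q$, $R$ are all \emph{non-empty}, i.e.\ the cross arcs are provably \emph{not} all preserved, the opposite of what you assume. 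Everything downstream of that parenthetical ($\mathrm{rank}(\Lambda_H)=k-1$, $\Lambda_D\subseteq\{\mathbf{0},\mathbf{a}\}$, $\mathbf{a}\cdot\mathbf{a}=1$, the $(\mathbf{a},\mathbf{a},\mathbf{0})$ assignment to $\overrightarrow{C_3}$) is correct as conditional reasoning, but the hypothesis it rests on is unproved and false in general. The paper avoids this implication altogether by obtaining it from the subdigraph inequality $\mathrm{inv}(D\Rightarrow H)\ge\mathrm{inv}(\overrightarrow{C_3}\Rightarrow H)$ in the case $\mathrm{inv}(\overrightarrow{C_3}\Rightarrow H)=k+1$, and reserves the vector analysis (Lemma~\ref{Essential claim} followed by the rank bound of Lemma~\ref{thm2.2} applied to a $(k+1)$-family of $D\Rightarrow H$ of rank at most $k$) for the case $\mathrm{inv}(\overrightarrow{C_3}\Rightarrow H)=k$; to repair your proof you would need to either import that machinery or otherwise justify the existence of a $k$-decycling family of $D\Rightarrow H$ in which all cross arcs are preserved.
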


\begin{proof}
 We just show that \(\mathrm{inv}(\overrightarrow{C_3} \Rightarrow H)=\mathrm{inv}(D \Rightarrow H)\) holds for every oriented graph \(H\). Actually, we have the conclusion in the other direction if we reverse all arcs.
    Obviously \(\mathrm{inv}(\overrightarrow{C_3} \Rightarrow H), \mathrm{inv}(D \Rightarrow H)
  \in \{\mathrm{inv}(H),\mathrm{inv}(H)+1\}\).

    If \(\mathrm{inv}(\overrightarrow{C_3} \Rightarrow H)=\mathrm{inv}(H)+1\), the result holds since \(\overrightarrow{C_3} \Rightarrow H\)
    is a subdigraph of \(D \Rightarrow H\).

    If \(\mathrm{inv}(\overrightarrow{C_3} \Rightarrow H)=\mathrm{inv}(H)=k\), then \(k\) is odd by Theorem~\ref{thm1.3}.
    By Lemma~\ref{Essential claim}, there exists a \(k\)-decycling family of \(H\) with the set of
         characteristic vectors \(\Lambda=\{\mathbf{z}| z \in V(H)\}\) such that
         \(\mathrm{rank}(\Lambda)=k-1\).
    Then we obtain a \((k+1)\)-decycling family of \(D \Rightarrow H\) with the set of
         characteristic vectors \(\Lambda'=\{\mathbf{z}| z \in V(D \Rightarrow H)\}\) such that
         \(\mathrm{rank}(\Lambda')\le k\) if we invert \(D\) and \(H\) respectively.
    Obviously \(\mathrm{inv}(D \Rightarrow H)=k\) or \(k+1\).
    If \(\mathrm{inv}(D \Rightarrow H)=k+1\), \(\mathrm{rank}(\Lambda')\ge k+1\) by Lemma~\ref{thm2.2},  a contradiction.

    Then
    \(\mathrm{inv}(D \Rightarrow H)=k=\mathrm{inv}(\overrightarrow{C_3} \Rightarrow H)\) which proves the result.
\end{proof}


Now we are going to  prove Theorem~\ref{abnormal}.

\noindent \textbf{\emph{Proof of Theorem~\ref{abnormal}.}}
By contradiction. Assume \(D\) is a tournament by Observation~\ref{extend}.
    Suppose \(\mathrm{inv}([\overrightarrow{C_3} , \overrightarrow{C_3} , D]) =  \mathrm{inv}(\overrightarrow{C_3} \Rightarrow D)=k\). Then \(k\) is odd  by Theorem~\ref{thm1.3}.
    By Lemma~\ref{odd invertion}, there exists a \(k\)-decycling family \((X_i)_{1 \le i \le k}\) of \(\overrightarrow{C_3} \Rightarrow D\) with the set of
         characteristic vectors \(\Lambda=\{\mathbf{z}| z \in V(\overrightarrow{C_3} \Rightarrow D)\}\) such that
         \(\Lambda \subset F\) where  \(F=\{x \in \fk | x \bot \mathbf{1}\}\). Then \(\mathrm{rank}(F)=k-1\).  Let \(\Gamma=\{\mathbf{x}\in \Lambda| x \in V(D)\}\) be the set of the characteristics vectors of the $k$-decycling family \((X_i\cap V(D))_{1 \le i \le k}\) of $D$.
         Since $\mathrm{inv}(D)$ is either $k$ or $k-1$, \(\mathrm{rank}(\Gamma)\ge k-1\) by Remark \ref{remark} and Lemma \ref{thm2.2}.
                  By Remark \ref{remark} and \(\Lambda \subset F\), \(\mathrm{rank}(\Lambda)=k-1\). From \(\Gamma \subset \Lambda\), we have \(\mathrm{rank}(\Gamma)=k-1\).  Assume \(V(\overrightarrow{C_3})=\{u,v,w\}\).
    Under the fixed decycling family, assume the
       new ordering of \(V(\overrightarrow{C_3} \Rightarrow D)\) is
    \[
        P < u < Q < v < R < w < S,
    \]
    where \(P,Q,R,S\) form a partition of \(V(D)\) with possibly empty set. Then $\Gamma=\{\mathbf{p}| p \in P\} \cup \{\mathbf{q}| q \in Q\} \cup\{\mathbf{r}| r \in R\} \cup\{\mathbf{s}| s \in S\}$. Comparing with the original tournament
    \[
        \{u,v,w\} \rightarrow P \cup Q \cup R \cup S,
    \]
    we can give the scalar products between \(\mathbf{u},\mathbf{v},\mathbf{w} \in \fk\) and \(\mathbf{p},\mathbf{q},\mathbf{r},\mathbf{s} \in \fk\), where \(p \in P, q \in Q, r\in R, s \in S\). That is
    \begin{equation*}\tag{2}	
        \begin{aligned}
	      &\mathbf{p} \cdot \mathbf{u}=1, \quad \mathbf{q} \cdot \mathbf{u}=0, \quad \mathbf{r} \cdot \mathbf{u}=0, \quad \mathbf{s} \cdot \mathbf{u}=0, \\
       &\mathbf{p} \cdot \mathbf{v}=1, \quad \mathbf{q} \cdot \mathbf{v}=1, \quad \mathbf{r} \cdot \mathbf{v}=0, \quad \mathbf{s} \cdot \mathbf{v}=0, \\
       &\mathbf{p} \cdot \mathbf{w}=1, \quad \mathbf{q} \cdot \mathbf{w}=1, \quad \mathbf{r} \cdot \mathbf{w}=1, \quad \mathbf{s} \cdot \mathbf{w}=0.
        \end{aligned}
    \end{equation*}
   By the same argument as the proof in Lemma \ref{Essential claim}, we have \(\mathbf{u} \cdot (\mathbf{v}+\mathbf{w})=1\) and \(\mathbf{w} \cdot (\mathbf{u}+\mathbf{v})=1\) which implies that \(\mathbf{u}, \mathbf{v}+\mathbf{w}, \mathbf{u}+\mathbf{v}\)
     are nonzero vectors.

     If \(P= \emptyset\) (resp. \(Q= \emptyset\) or \(R= \emptyset\)), then \(\mathbf{u} \bot \Gamma\) (resp. \((\mathbf{u}+\mathbf{v}) \bot \Gamma\) or
      \((\mathbf{v}+\mathbf{w}) \bot \Gamma\)). Note that $\mathbf{u}$ (resp. $\mathbf{u}+\mathbf{v}$  or $\mathbf{v}+\mathbf{w}$) is linear independent with $\mathbf{1}$ and $\Gamma\subset  F $. Thus we have \(\mathrm{rank}(\Gamma) \le k-2\), a contradiction with  \(\mathrm{rank}(\Gamma)=k-1\). Hence we have
     $P $, $Q $ and $R $ all are non-empty sets.

     Now we can choose \(p_0 \in P, q_0 \in Q, r_0 \in R\). From \((2)\), it is not difficult to show that  \(\{\mathbf{p_0}, \mathbf{q_0}, \mathbf{r_0}\}\)  and \(\{\mathbf{u}, \mathbf{v}, \mathbf{w}\}\) are linear independent.
      Let \(\Phi=\{ \mathbf{x} \in F | \mathbf{x} \bot \mathbf{u}, \mathbf{x} \bot \mathbf{v}, \mathbf{x} \bot \mathbf{w}\}\). Since \(\mathrm{rank}(\{\mathbf{1},\mathbf{u}, \mathbf{v}, \mathbf{w}\})=4\),  \(\mathrm{rank}(\Phi)=k-4\). By the same argument as  the proof in Lemma \ref{Essential claim},  for any \(p \in P\) (resp. $q \in Q$ or $r \in R$), there is $\phi_p\in \Phi$ ( resp. $\phi_q\in \Phi$ or $\phi_r\in \Phi$) such that $\mathbf{p}=\mathbf{p_0}+\phi_p$ (resp. $\mathbf{q}=\mathbf{q_0}+\phi_q$ or $\mathbf{r}=\mathbf{r_0}+\phi_r$).
       Note that any vector is a linear combination of
      \(\{\mathbf{p_0}, \mathbf{q_0}, \mathbf{r_0}\}\) and a vector in \(\Phi\). Assume
      \[
           \mathbf{u}+\mathbf{v}=\lambda_1\mathbf{p_0}+\lambda_2\mathbf{q_0}+\lambda_3\mathbf{r_0}+\phi.
      \]
      Since \(\mathbf{w} \cdot (\mathbf{u}+\mathbf{v})=1, \mathbf{u}+\mathbf{v} \notin \Phi\).
      Then  \(|\{\lambda_1,\lambda_2,\lambda_3\}\cap(\mathbb{F}_2\setminus\{0\})|\ge 1\). Let
            $$\Gamma'=\left\{
      \begin{array}{ll}
      \{\mathbf{p}+\mathbf{u}+\mathbf{v}| p \in P\} \cup \{\mathbf{q}| q \in Q\} \cup\{\mathbf{r}| r \in R\} \cup\{\mathbf{s}| s \in S\}& \mbox{if $\lambda_1=1$,}\\
      \{\mathbf{p}| p \in P\} \cup \{\mathbf{q}+\mathbf{u}+\mathbf{v}| q \in Q\} \cup\{\mathbf{r}| r \in R\} \cup\{\mathbf{s}| s \in S\}& \mbox{if $\lambda_2=1$,}\\
      \{\mathbf{p}| p \in P\} \cup \{\mathbf{q}| q \in Q\} \cup\{\mathbf{r}+\mathbf{u}+\mathbf{v}| r \in R\} \cup\{\mathbf{s}| s \in S\}& \mbox{if $\lambda_3=1$.}
      \end{array}\right.$$

      By the same argument as  the proof in Lemma \ref{Essential claim}, we have \(\Gamma'\) is the set of characteristic vectors of a \(k\)-decycling family of $D$. Obviously, \(\mathrm{rank}(\Gamma')=k-2\), we have a contradiction because the rank of the characteristics vectors of any decycling family of $D$ is at least $k-1$.\hfill $\square$\par

\noindent \textbf{\emph{Proof of Corollary~\ref{corkjoin}.}}
We prove it by induction on $k$. The situation when \(k=2\) is proved by Lemma~\ref{cor ext}. Assume $k\ge 3$. By Lemma~\ref{cor ext} and Theorem \ref{direction}, we can assume $j\not=1$ and then
 $\mathrm{inv}([D_1,D_2,\ldots,D_k])=\mathrm{inv}([\overrightarrow{C_3},D_2,\ldots,D_k])$ by Lemma~\ref{cor ext}. If $j\not=k$,  $\mathrm{inv}([\overrightarrow{C_3},D_2,\ldots,D_k])=\mathrm{inv}([\overrightarrow{C_3},D_2,\ldots,D_{k-1},\overrightarrow{C_3}])$ by Lemma~\ref{cor ext}. If $j=k$, $\mathrm{inv}([\overrightarrow{C_3},D_2,\ldots,D_k])=\mathrm{inv}([D_2,\ldots,D_{k},\overrightarrow{C_3}])$ by Theorem~\ref{direction}. By Lemma~\ref{cor ext}, $\mathrm{inv}([D_2,\ldots,D_{k},\overrightarrow{C_3}])=\mathrm{inv}([\overrightarrow{C_3},D_3,\ldots,D_k,\overrightarrow{C_3}])$.

 Hence we can assume $\mathrm{inv}([D_1,D_2,\ldots,D_k])=\mathrm{inv}([\overrightarrow{C_3},D_2,\ldots,D_{k-1},\overrightarrow{C_3}])$.
By Theorems~\ref{direction}
 and \ref{abnormal}, for any oriented graph \(D\),
     \(\mathrm{inv}([\overrightarrow{C_3},  \overrightarrow{C_3},  D]) =
     \mathrm{inv}([\overrightarrow{C_3},  D, \overrightarrow{C_3}] )=
     \mathrm{inv}([D, \overrightarrow{C_3}, \overrightarrow{C_3}])=
     \mathrm{inv}(\overrightarrow{C_3} \Rightarrow D)+1\). Then we obtain the result as required.
\hfill $\square$\par

\section{Blow-up graphs}\label{sec3}
    The content in this section is inspired by Alon el at. \cite{2212.11969}. They mainly concerned about
    the graph \(D=TT_k[D_1,D_2,\ldots,D_k]\).
    We generalize the base graph \(TT_k\) to any tournament \(H\),
    since it creates a larger inversion number sometimes.

\noindent \textbf{\emph{Proof of Theorem~\ref{thm1.5}.}}
By contradiction. Assume  \(D_i\) is tournament for all $1\le i \le n$.
        First \(\mathrm{inv}(T[D_1,D_2, \cdots,D_n]) \le n+1\) as we can invert \(T\) to acyclic and
        then invert \(D_i\) one after another.
        Since \(T[\overrightarrow{C_3}]_n\) is a subdigraph of \(T[D_1,D_2, \ldots,D_n]\), we only need to prove  the
        situation when \(D_i=\overrightarrow{C_3}\) by Observation \ref{ob1}. Let \(\mathrm{inv}(T[D_1,D_2, \ldots,D_n]) =k\). For short, set $G=T[D_1,D_2, \ldots,D_n]$. Then there exists a \(k\)-decycling family $(X_i)_{i \in I}$ of \(G\) with the set of
         characteristic vectors \(\Lambda=\{\mathbf{u}\in \fk| u \in V(G)\}\).  Note that the digraph Inv\((G;(X_i)_{i \in I})\) is acyclic.

         Recall $u<v$ (resp. $u\rightarrow v$) means $uv\in A(Inv(G;(X_i)_{i \in I}) $ (resp. $uv\in A(G)$).

         Let \(V(D_i)=\{u_i,v_i,w_i\}\), $1\le i\le n$. Since Inv\((D_i;(X_j)_{j \in I})\) is acyclic, we can assume \(u_i < v_i, u_i < w_i\) and \(\mathbf{u_i} \cdot \mathbf{v_i}=0, \mathbf{u_i} \cdot \mathbf{w_i}=1\),
         which means \(v_i \rightarrow w_i\) for $1\le i\le n$.

         \begin{claim}\label{lemma3.1}
             \(\{\mathbf{u_i}\}_{1 \le i \le n}\) are linearly independent.
         \end{claim}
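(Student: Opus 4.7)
The plan is to derive a contradiction from any putative nontrivial relation $\sum_{i\in S}\mathbf{u_i}=\mathbf{0}$ in $\fk$, by pairing it with the test vector $\mathbf{t_b}:=\mathbf{v_b}+\mathbf{w_b}$, where $b\in S$ is chosen so that $u_b$ is the largest element of $\{u_i : i\in S\}$ under the acyclic order `$<$' on $V(G)$ induced by the given $k$-decycling family.

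First I would extract the diagonal contribution: from the normalizations $\mathbf{u_b}\cdot \mathbf{v_b}=0$ and $\mathbf{u_b}\cdot \mathbf{w_b}=1$ we immediately get $\mathbf{u_b}\cdot\mathbf{t_b}=1$. The heart of the argument is then to show that every off-diagonal contribution $\mathbf{u_i}\cdot\mathbf{t_b}$ with $i\in S\setminus\{b\}$ vanishes; combined with the diagonal computation this would yield $\sum_{i\in S}\mathbf{u_i}\cdot\mathbf{t_b}=1$, whereas the hypothesis $\sum_{i\in S}\mathbf{u_i}=\mathbf{0}$ forces this sum to equal $\mathbf{0}\cdot\mathbf{t_b}=0$, the desired contradiction.

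The off-diagonal step is where the blow-up structure of $G=T[\overrightarrow{C_3}]_n$ is really used. For any $i\neq b$, all four arcs between $\{v_b,w_b\}$ and $u_i$ point in the same direction in $G$, namely the direction of the $T$-arc between $v_i$ and $v_b$. A short case analysis on that direction shows that $\mathbf{u_i}\cdot\mathbf{t_b}=1$ if and only if $u_i$ lies strictly between $v_b$ and $w_b$ in the total order. By the extremal choice of $b$, however, $u_i<u_b<\min(v_b,w_b)$ for every $i\in S\setminus\{b\}$, so $u_i$ is strictly smaller than both $v_b$ and $w_b$, and the ``between'' condition fails in every case. The main delicate point is precisely this ``between'' characterization of $\mathbf{u_i}\cdot \mathbf{t_b}$, which one must verify for both possible orientations of the $T$-arc between $v_i$ and $v_b$; once that is in hand, the extremal choice of $b$ closes the argument and automatically handles the degenerate case $|S|=1$.
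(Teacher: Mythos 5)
Your proposal is correct and follows essentially the same route as the paper: both choose the index whose $u$-vertex is maximal in the acyclic order and pair the putative relation with the test vector $\mathbf{v_b}+\mathbf{w_b}$, observing that the extremal choice forces every off-diagonal term to vanish while the diagonal term equals $1$. (Only trivial slips: there are two, not four, arcs between $u_i$ and $\{v_b,w_b\}$, and the relevant $T$-arc is the one between $x_i$ and $x_b$.)
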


         \noindent \textbf{\emph{Proof of Claim~\ref{lemma3.1}.}}
             It is equivalent to show that for any nonempty set \(I \subseteq [n]\),
             \(\sum_{i \in I} \mathbf{u_i} \neq 0\). Let \(I \subseteq [n]\). Note that $\mathbf{u_i}$ is a nonzero vector for all $1\le i\le n$. Then we can assume $|I|\ge 2$. Choose \(m \in I\) such that \(u_{m'} < u_m\)
              for any
             \(m' \in I \backslash \{m\}\).
             Since \(u_m < \{v_m,w_m\}\), then \(u_{m'} < \{v_m,w_m\}\) for any
             \(m' \in I \backslash \{m\}\).
             Note that either \(u_{m'} \rightarrow \{v_m,w_m\}\) or \(\{v_m,w_m\} \rightarrow u_{m'}\),
             which means \(\mathbf{u_{m'}} \cdot (\mathbf{v_m}+\mathbf{w_m})=0\).
             Since \(\mathbf{u_{m}} \cdot (\mathbf{v_m}+\mathbf{w_m})=1\), we have
             \((\sum_{i \in I} \mathbf{u_i}) \cdot (\mathbf{v_m}+\mathbf{w_m})=1\).
             Thus \(\sum_{i \in I} \mathbf{u_i} \neq 0\) as required.\q
\vskip.2cm
Let \(V(T)=\{x_1, x_2, \ldots, x_n\}\), where $x_i$ corresponds to $D_i$ for $1\le i\le n$.

         \begin{claim}\label{lemma3.2}
             For any nonempty set \(I \subseteq [n]\). Let \(S=\{\mathbf{u_i},\mathbf{v_i},\mathbf{w_i}|i \in I\}\).
             If \(\mathrm{rank}(S)=|I|\), then  \(T \left\langle \{x_i\}_{i \in I} \right\rangle\) is acyclic.
         \end{claim}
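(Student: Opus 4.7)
I plan to prove the contrapositive: if $T\langle\{x_i\}_{i\in I}\rangle$ contains a directed cycle, then $\mathrm{rank}(S)\ge |I|+1$. Since a tournament fails to be acyclic precisely when it contains a directed 3-cycle, I may reduce to the case that there exist $a,b,c\in I$ with $x_a\to x_b\to x_c\to x_a$ in $T$. By the blow-up construction, for every triple $(p_a,p_b,p_c)\in V(D_a)\times V(D_b)\times V(D_c)$ the arcs $p_ap_b$, $p_bp_c$, $p_cp_a$ form a directed triangle in $G$; and since $\mathrm{Inv}(G;(X_i)_{i\in I})$ is acyclic, an odd number of these three arcs must be reversed by the inversions. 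This yields the parity identity
\[
\mathbf{p_a}\cdot\mathbf{p_b}+\mathbf{p_b}\cdot\mathbf{p_c}+\mathbf{p_c}\cdot\mathbf{p_a}=1
\]
in $\mathbb{F}_2$ for all $27$ such triples.

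Next, I would extract orthogonality relations by XOR-ing identities for triples that differ in a single coordinate. For example, differencing the identity at $(u_a,u_b,u_c)$ from that at $(v_a,u_b,u_c)$ gives $(\mathbf{u_a}+\mathbf{v_a})\cdot(\mathbf{u_b}+\mathbf{u_c})=0$; summing this with the analogous relation for $\mathbf{w_a}$ produces $(\mathbf{u_b}+\mathbf{u_c})\cdot(\mathbf{v_a}+\mathbf{w_a})=0$, and by symmetry $(\mathbf{u_i}+\mathbf{u_j})\cdot(\mathbf{v_k}+\mathbf{w_k})=0$ for every permutation $\{i,j,k\}=\{a,b,c\}$. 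Meanwhile, the diagonal constraints $\mathbf{u_k}\cdot\mathbf{v_k}=0$ and $\mathbf{u_k}\cdot\mathbf{w_k}=1$ give $\mathbf{u_k}\cdot(\mathbf{v_k}+\mathbf{w_k})=1$ for each $k\in\{a,b,c\}$.

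Now I would suppose for contradiction that $\mathrm{rank}(S)=|I|$. By Claim 3.1, $\{\mathbf{u_j}\}_{j\in I}$ is a basis of $\mathrm{span}(S)$, so each $\mathbf{v_i}+\mathbf{w_i}$ admits an expansion $\sum_{j\in I}\gamma^{(i)}_{j}\mathbf{u_j}$. Substituting these expansions into the orthogonality relations above, together with the pairings $\mathbf{u_k}\cdot(\mathbf{v_k}+\mathbf{w_k})=1$, converts everything into a linear system over $\mathbb{F}_2$ in the Gram entries $M_{ij}=\mathbf{u_i}\cdot\mathbf{u_j}$ and the coefficients $\gamma^{(i)}_{j}$. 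Augmented by further parity identities from triples involving $v_b,w_b,v_c,w_c$ and the base relation $M_{ab}+M_{bc}+M_{ca}=1$ coming from $(u_a,u_b,u_c)$, a careful combination should yield an incompatibility, contradicting $\mathrm{rank}(S)=|I|$.

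The hardest step is this last piece of linear algebra: one must choose the right combinations of parity identities so that the contributions from $\mathbf{u_j}$ with $j\in I\setminus\{a,b,c\}$ drop out, and then verify that the remaining system on the $3\times 3$ block $(M_{ij})_{i,j\in\{a,b,c\}}$ admits no solution compatible with the base parity relation. I would exploit the symmetry among $a,b,c$ and the abundance of triple identities to keep the bookkeeping tractable, ultimately reducing the verification to a finite case check on $3\times 3$ symmetric matrices over $\mathbb{F}_2$.
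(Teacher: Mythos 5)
Your proposal breaks at its very first quantitative step: the claimed parity identity $\mathbf{p_a}\cdot\mathbf{p_b}+\mathbf{p_b}\cdot\mathbf{p_c}+\mathbf{p_c}\cdot\mathbf{p_a}=1$ is false. A directed triangle becomes acyclic after reversing $1$ \emph{or} $2$ of its arcs (and stays cyclic only when $0$ or all $3$ are reversed), so acyclicity of $\mathrm{Inv}(G;(X_i)_{i\in I})$ forces the triple of products to avoid $(0,0,0)$ and $(1,1,1)$ but does not fix its parity. Concretely, for $\overrightarrow{C_3}=a\to b\to c\to a$ the decycling family $X_1=\{a,b\}$, $X_2=\{b,c\}$ gives $\mathbf{a}=(1,0)$, $\mathbf{b}=(1,1)$, $\mathbf{c}=(0,1)$, hence $\mathbf{a}\cdot\mathbf{b}+\mathbf{b}\cdot\mathbf{c}+\mathbf{c}\cdot\mathbf{a}=0$ even though the result is acyclic. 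Since all $27$ base identities may fail, every relation you derive from them by XOR-ing (the orthogonality relations $(\mathbf{u_i}+\mathbf{u_j})\cdot(\mathbf{v_k}+\mathbf{w_k})=0$, the constraint $M_{ab}+M_{bc}+M_{ca}=1$, and the final linear system) is unfounded, and the concluding ``incompatibility'' is in any case only asserted, not exhibited. The reduction of non-acyclicity to a directed $3$-cycle in $T$ is fine, but triangle-by-triangle information is simply too weak here.

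The paper's proof uses the global structure that your local approach discards: it inducts on $|I|$, takes the element $z$ of $\{v_i,w_i\mid i\in I\}$ that is minimum in the acyclic order produced by the inversions, expands $\mathbf{z}=\sum_{j\in J}\mathbf{u_j}$ in the basis $\{\mathbf{u_j}\}_{j\in I}$ (available exactly because $\mathrm{rank}(S)=|I|$ together with Claim~\ref{lemma3.1}), and then tests this expansion against $\mathbf{v_s}+\mathbf{w_s}$ for a carefully chosen maximal $s$. The key mechanism is that for vertices $y$ lying \emph{below} both $v_s$ and $w_s$ in the final order, $y$ either dominates or is dominated by both in $G$, so $\mathbf{y}\cdot(\mathbf{v_s}+\mathbf{w_s})=0$, while $\mathbf{u_s}\cdot(\mathbf{v_s}+\mathbf{w_s})=1$; this is an honest consequence of the total order, not of any single triangle. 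If you want to salvage your plan, you would need to replace the parity identity by order-based relations of this kind.
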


        \noindent \textbf{\emph{Proof of Claim~\ref{lemma3.2}.}}
         Let \(R=\{u_i,v_i,w_i| i \in I\}\).
             We prove it by induction on \(|I|\). Let \(Z= \{v_i,w_i|i \in I\}\) and choose \(z\in Z\) such that $z<y$ for any $y\in Z\setminus\{z\}$. Assume \(z \in \{v_t,w_t\}\), where \(t \in I\). Since \(\mathrm{rank}(S)=|I|\),
              there exists \(J \subseteq I\) such that \(\mathbf{z}= \sum_{j \in J} \mathbf{u_j}\) by
              Claim~\ref{lemma3.1}.

               If \(J = \emptyset\), then \(\mathbf{z} = 0\) which implies all arcs incident with \(z\)
              are not inverted. Hence  \(z\rightarrow y\) for all $y\in Z\setminus\{z\}$.
              So \(x_t \rightarrow x_i\) for each \(i \in I \setminus \{t\}\) in  \(T\) and \(z = v_t\) by \(v_t \rightarrow w_t\).
            Let \(R'=R\setminus \{u_t,v_t,w_t\}\) and
               \(S'=S\setminus \{\mathbf{u_t},\mathbf{v_t},\mathbf{w_t}\}\). Since \(u_t \rightarrow R'\) and \(u_t < R'\),
              we have \( \mathbf{u_t} \bot S'\) which implies \(\mathrm{rank}(S')=|I|-1\). By the inductive hypothesis, \(T \left\langle \{x_i\}_{i \in I \backslash \{t\}} \right\rangle\) is acyclic. Hence \(T \left\langle \{x_i\}_{i \in I } \right\rangle\) is acyclic
               as required.

              Now we consider the case  \(J\not = \emptyset\).
                If \(J=\{t\}\), then \(\mathbf{z}=\mathbf{u_t}\). Recall \(z \in \{v_t,w_t\}\). If \(z = v_t\) (resp. \(z = w_t\)), then \(\mathbf{v_t}=\mathbf{u_t}\) and \(v_t < w_t\) (resp. \(\mathbf{w_t}=\mathbf{u_t}\) and \(w_t < v_t\)) which implies
              \(1= \mathbf{u_t} \cdot \mathbf{w_t}=\mathbf{v_t} \cdot \mathbf{w_t}\) (resp. \(0= \mathbf{u_t} \cdot \mathbf{v_t}=\mathbf{w_t} \cdot \mathbf{v_t}\)),
             a
              contradiction with \(v_t \rightarrow w_t\). So we have \(J\not=\{t\}\). Let \(u_s\in \{u_j | j \in J\}\) such that $x<u_s$ for any $x\in \{u_j | j \in J\}\setminus\{u_s\}$. We consider two cases.

              \textbf{Case \(1\).} \(s \neq t\). Then
              \(\{z\} \cup \{u_j | j \in J\} < \{v_s , w_s\}\). For any $y\in \{z\} \cup \{u_j | j \in J\setminus\{s\}\}$, we have
               either
              \(y \rightarrow \{v_s , w_s\}\) or \(\{v_s , w_s\} \rightarrow y\) which implies
               \(\mathbf{y} \cdot (\mathbf{v_s}+\mathbf{w_s})=0\).
              Hence \((\mathbf{z}+\sum_{j \in J} \mathbf{u_j}) \cdot (\mathbf{v_s}+\mathbf{w_s})=1\)
              by \(\mathbf{u_s} \cdot (\mathbf{v_s}+\mathbf{w_s})=1\),
              a contradiction with \(\mathbf{z}= \sum_{j \in J} \mathbf{u_j}\).

              \textbf{Case \(2\).} \(s=t\). In this case, $u_s<z$ by assumption. Let \(u_{s'}\in
              \{u_j | j \in J\setminus \{t\}\}\) such that $x<u_{s'}$ for any $x\in \{u_j | j \in J\setminus \{t,s'\}\}$.
                            Then
              \(z < \{v_{s'} , w_{s'}\}\) and
              \( \{u_j | j \in J\} \le u_s < z < \{v_{s'} , w_{s'}\}\). For any $y\in \{z\} \cup \{u_j | j \in J\setminus\{s'\}\})$, we have
                either
              \(y \rightarrow \{v_{s'} , w_{s'}\}\) or \(\{v_{s'} , w_{s'}\} \rightarrow y\) which implies
              we have \(\mathbf{y} \cdot (\mathbf{v_{s'}}+\mathbf{w_{s'}})=0\).
              Therefore \((\mathbf{z}+\sum_{j \in J} \mathbf{u_j}) \cdot (\mathbf{v_{s'}}+\mathbf{w_{s'}})=1\)
             by \(\mathbf{u_{s'}} \cdot (\mathbf{v_{s'}}+\mathbf{w_{s'}})=1\),
              a contradiction with \(\mathbf{z}= \sum_{j \in J} \mathbf{u_j}\).
             \q

         Now we complete the proof of Theorem~\ref{thm1.5}. By  Claim~\ref{lemma3.1}, we have \(\mathrm{rank}(\Lambda) \ge n\) which implies
    \(k \ge n\). If $k=n$, then \(\mathrm{rank}(\Lambda) = n\). By  Claim~\ref{lemma3.2}, \(\invt=0\), our final contradiction.
   \hfill $\square$\par


     Since we can invert \(T\) to acyclic and
        then invert \(D_i\) one after another,
    then for oriented graphs \(D_i\) with \(\invdi=1\), the following inequality holds
    \[
         n \le \mathrm{inv}(T[D_1,D_2,\cdots,D_n]) \le n+\mathrm{inv}(T).
    \]
    Here we prove that there exists $T$ such that it cannot reach the upper bound.


\noindent \textbf{\emph{Proof of Theorem~\ref{example}.}}
From Theorem \ref{thm1.2}, for each odd integer $k\ge3$, there is a tournament $D$ with $\mathrm{inv}(D)=k$ such that  $\mathrm{inv}(D\Rightarrow\overrightarrow{C_3})\le k$. Since $D$ is a subgraph of $D\Rightarrow\overrightarrow{C_3}$, we have $\mathrm{inv}(D\Rightarrow\overrightarrow{C_3})=k$. From Theorem \ref{direction}, we know $\mathrm{inv}(D)=\mathrm{inv}(\overrightarrow{C_3}\Rightarrow D)=k$. Let \(T_k=u_1 \Rightarrow D\). It's not hard to find that $\mathrm{inv}(T_k)=k$, since $D$ is a subgraph of $T_k$ and the decycling family of $D$ gives a decycling family of $T_k$. Assume \(V(T_k)=\{u_1,u_2,\dots,u_{n_k}\}\). We need to show \(\mathrm{inv}(T_k[D_1,D_2,\dots,D_{n_k}]) \le n_k+k-1\) when $\mathrm{inv}(D_i)=1$ for all $i$.

From Lemma \ref{odd invertion}, there is a \(k\)-decycling family \((X_i)_{1 \le i \le k}\) of $D$ such that every characteristic vector is orthogonal to  \(\mathbf{1}\). It means $|\{i\in [k]|v\in X_i\}|$ is even for any $v\in V(D)$. The corresponding blow-up of $X_i$ is $X_i^{\prime}=\bigcup_{u_j \in X_i}V(D_j)$. Since $\mathrm{inv}(D_j)=1$, for each $j\in[n_k]$, there exists a $Y_j\subset V(D_j)$ such that Inv$(D_j;Y_j)$ is acyclic.
Then it is not hard to check that there exists a \((n_k+k-1)\)-decycling family as following:

        \begin{equation*}	
            \begin{aligned}
	  &Z_i = Y_1 \cup X_i^{\prime} &\quad 1 \le i \le k, \\
        &Z_{k+j-1} = Y_j &\quad 2 \le j \le n_k.
            \end{aligned}
        \end{equation*}
    Hence  \(\mathrm{inv}(T_k[D_1,D_2,\dots,D_{n_k}]) \le n_k+k-1\) and we are done.
   \hfill $\square$\par

\section*{Acknowledgement}
Y. Yang is supported by the Fundamental Research Funds for the Central University (Grant 500423306) in China. M. Lu is supported by the National Natural
Science Foundation of China (Grant 12171272 \& 12161141003).


\end{document}